\xpatchcmd{\@thm}{\thm@headpunct{.}}{\thm@headpunct{}}{}{}
\newcommand\bes{\begin{eqnarray}}
\newcommand\ees{\end{eqnarray}}
\newcommand\R{\mathbb R}
\newcommand{\ggs}{{\succ\!\!\succ}}
\newcommand{\llp}{{\prec\!\!\prec}}
\newtheorem{theorem}{Theorem}[section]
\newtheorem{lemma}[theorem]{Lemma}
\newtheorem{remark}[theorem]{Remark}
\numberwithin{equation}{section}
\begin{document}

\title[A nonlocal epidemic model with free boundaries ]{Long-time dynamics of a nonlocal epidemic model with  free boundaries:  spreading-vanishing dichotomy}

\author[R. Wang and Y. Du]{Rong Wang$^*$ and Yihong Du$^*$}
\thanks{$^*$School of Science and Technology, University of New England, Armidale, NSW 2351, Australia}
\thanks{Emails: ydu@une.edu.au (Y. Du), rwang3@myune.edu.au (R. Wang) }
\thanks{This research was supported by the Australian Research Coucile.}
\date{\today}

\begin{abstract}
In this paper, we examine the long-time dynamics of an epidemic model whose diffusion and reaction terms involve nonlocal effects described by suitable convolution operators.
The spreading front of the disease is represented by the free boundaries in the model. We show that the model is well-posed,  its long-time dynamical behaviour is characterised by a spreading-vanishing dichotomy, and we also obtain sharp criteria to determine the dichotomy. Some of the nonlocal effects in the model pose extra difficulties in the mathematical treatment, which are dealt with by introducing new approaches. The model can capture accelerated spreading, and its spreading rate will be discussed in a subsequent work.

\bigskip

\textbf{Key Words}: nonlocal diffusion, free boundary, epidemic spreading
\medskip

\textbf{AMS Subject Classification (2010)}: 35R09, 35R35, 92D30

\end{abstract}
\maketitle
\section{Introduction}

In this paper, we examine the long-time dynamical behaviour of an epidemic model with nonlocal diffusion and free boundaries, which is a refined version of several previous epidemic models based on the following ODE system proposed by
Capasso and Paveri-fontana \cite{Capasso1979},
for the spread of
oro-faecal transmitted disease such as cholera,
\begin{equation}\label{model*ODE2}
u'=-a_{11}u+a_{12}v,\
v'=-a_{22}v+G(u),\ t>0.
\end{equation}
In \eqref{model*ODE2}, $u$ denotes the average density of the infective agents (bacteria,
virus, etc.), while $v$ represents the average density of the infective human population, both are functions of  time $t$. The constant
 $a_{11}$ denotes the natural death rate of the agents,
 $a_{12}$ represents the growth rate of the agents contributed by the infective humans,
 and $a_{22}$ denotes the fatality rate of the infective human population. These rates  are all  positive.
 The function $G(u)$ represents the infective rate of humans, which is assumed to satisfy
\begin{enumerate}
\item[$(G1)$]: \ \ \ \ $G\in C^{1}([0,\infty)),\ G(0)=0,\ G^\prime (z)>0\ \mbox{ for } z\geq 0;$

\item[$(G2)$]: \ \ \ \ $\frac{G(z)}{z}$ is strictly decreasing and $\lim_{z\rightarrow +\infty}\frac{G(z)}{z} < \frac{a_{11}a_{22}}{a_{12}}$.

\end{enumerate}

The dynamics of \eqref{model*ODE2} is governed by the following threshold parameter, known as the basic reproduction number,
\begin{equation}\label{R_0}
R_0:=\frac{a_{12}G'(0)}{a_{11}a_{22}}.
\end{equation}
Specifically, if $R_0<1$, the unique solution $(u,v)$ with initial values $u(0),v(0)>0$ goes to $(0,0)$ as $t\to\infty$;
while if $R_0>1$, then \eqref{model*ODE2} admits a unique positive equilibrium $(u^*,v^*)$ and $(u,v)\to (u^*, v^*)$ as $t\to\infty$.
Clearly the pair $(u^*, v^*)$ is uniquely determined by (when $R_0>1$)
 \begin{equation}\label{u^*v^*}
 \frac{G(u^*)}{u^*}=\frac{a_{11}a_{22}}{a_{12}},\; v^*=\frac{a_{11}}{a_{12}}u^*.
 \end{equation}

Over the years, this basic model has been refined many times along different directions.
To include the effect of spatial movements of both the infectious agent and the infective human population in the epidemic,
 Capasso and Maddalena \cite{CapassoM1981}
considered the corresponding reaction-diffusion system to \eqref{model*ODE2} over a bounded spatial domain with suitable boundary conditions,
and they obtained similar (but slightly weaker) results on the
long-term dynamics of the reaction-diffusion model, by replacing $R_0$ with a number or numbers determined by certain eigenvalue problems. Their result was sharp when diffusion only appears in the equation for the infectious agents $u$, assuming  the mobility of the infective humans $v$ is relatively small and hence ignored in the model.

Many subsequent works also ignored the mobility of the infective humans. For example, in \cite{WuSunLiu} and \cite{ZhaoWang}, in order to describe the spatial spreading of the disease, traveling wave solutions were obtained for the corresponding system
\begin{equation}\label{TW}
\begin{cases}
u_t=d_1\Delta u-a_{11}u+a_{12}v&~~\text{in}~~\mathbb{R}\times (0,+\infty),\\
v_t=-a_{22}v+G(u)&~~\text{in}~~\mathbb{R}\times (0,+\infty).
\end{cases}
\end{equation}

To capture the evolution of the spreading front of the disease, Ahn et al \cite{Ahn2016} investigated a free boundary version of \eqref{TW}, which is a special case of the following system (namely, with
 $d_2=\rho=0$),
\begin{equation}\label{OE}
\begin{cases}
u_t=d_1 u_{xx}- a_{11}u+a_{12}v,&
g(t)< x <h(t),~~t>0, \\
v_t=d_2v_{xx}-a_{22}v+G(u),&
g(t)<x<h(t),~~t>0, \\
u=v=0,&x=h(t)~\mbox{or}~x=g(t),~t> 0,~~~~~~~~~~~~~~~~~~~~~\\
h^{\prime}(t)=-\mu [u_{x}(h(t),t)+\rho v_{x}(h(t),t)],&t> 0,~~~~~~~~~~~~~~~~~~~~~ \\
g^{\prime}(t)=-\mu[u_{x}(g(t),t)+\rho v_{x}(g(t),t)],&t> 0,~~~~~~~~~~~~~~~~~~~~~ \\
h(0)=h_0,~g(0)=-h_0,&\\
u(x,0)=u_0(x),~~v(x,0)=v_0(x),&-h_0\leq x\leq h_0,
\end{cases}
\end{equation}
where the epidemic region is represented by the evolving interval $[g(t), h(t)]$, with $x=g(t)$ and $x=h(t)$ giving the spreading fronts of the disease.

The general case of \eqref{OE} with $d_2> 0$ and $\rho\geq 0$ was considered by Wang and Du \cite{WangDu2021},
where  a rather complete description of the long-time dynamical behaviour of
\eqref{OE} was obtained, which in particular reveals the influence of $d_2$ in the model. More precisely, with $R_0$ given by \eqref{R_0}, the epidemic modelled by \eqref{OE} vanishes
if $R_0\leq 1$,  i.e.,  as $t\to\infty$, the epidemic region $[g(t),h(t)]$ converges to a finite interval
and $(u(x,t),v(x,t))$ goes to $(0,0)$ uniformly in $x$.  If $R_0>1$, a spreading-vanishing dichotomy holds, namely  either the epidemic vanishes as above,
or it spreads successfully in the sense that  $[g(t),h(t)]$ converges to $(-\infty, \infty)$
and $(u(x,t),v(x,t))\to (u^*,v^*)$ locally uniformly in $x$
as $t\to \infty$. Furthermore, a critical length $L^*$ independent of the initial data $(u_0,v_0, h_0)$ was found such
that,
if the range radius of the initial infected region $h_0\geq L^*$, then the spreading is always successful, while
 if $h_0<L^*$, then
there exists $\mu^*>0$ depending on $(u_0,v_0)$ such that vanishing occurs
when $0<\mu\leq\mu^*$ and spreading persists when $\mu>\mu^*$.
Moreover, when spreading persists,
 it has a finite asymptotic speed, i.e., $-\lim_{t\to\infty}g(t)/t=\lim_{t\to\infty}h(t)/t=c_0>0$, with $c_0$ determined by an associated semi-wave problem.

When $d_2=\rho=0$ in problem \eqref{OE}, the spreading-vanishing dichotomy was first obtained
by Ahn et al. \cite{Ahn2016},  and the spreading speed for this case was obtained by Zhao et al. \cite{zhaomeng2020}.

The effect of $d_2$ on the dynamics of \eqref{OE} can be partly seen from the formula for the critical number $L^*$, which is given by
\begin{equation*}
L^*:=\frac{\pi}2\sqrt{\frac{d_1a_{22}+d_2a_{11}+\sqrt{(d_1a_{22}+
d_2 a_{11})^2+4d_1d_2a_{11}a_{22}(R_0-1)}}{2a_{11}a_{22}(R_0-1)}}.
\end{equation*}
So $L^*$ is increasing in $d_2$, implying that the chance of successful spreading of the epidemic is decreased when $d_2$ is increased, which might appear counter intuitive on first sight, but it is in agreement with result already arising in the reaction-diffusion model over a bounded domain considered by Capasso and Maddalena \cite{CapassoM1981} mentioned above.

In \eqref{OE}, the spatial movements of infectious  agents and humans are assumed to follow the rule of Brownian motion, represented by the
``local diffusion" terms $d_1u_{xx}$ and $d_2v_{xx}$.
 However, in reality the movements of infectious agents and infective humans often contain nonlocal factors such as long-distance dispersal caused by  modern ways of transportation, for example.  To include such nonlocal dispersal factors, one widely used approach is to  replace the local diffusion terms $d_1 u_{xx}$ and $d_2 v_{xx}$ by, respectively,
\begin{equation*}\begin{aligned}
d_1\int_{\mathbb{R}}J_1(x-y)u(y,t)dy-d_1u(x,t)\mbox{ and }d_2\int_{\mathbb{R}}J_2(x-y)v(y,t)dy-d_2v(x,t),
\end{aligned}\end{equation*}
where $J_i(x)$ $(i=1,2)$ is a continuous function satisfying
\begin{enumerate}
\item[$\textbf{(J)}$] $J_i\in C(\R)\cap L^{\infty}(\R)$, $J_i$ is symmetric and nonnegative,
 $J_i(0)>0$, $\int_{\R}J_i(x)dx=1$.
\end{enumerate}
Roughly speaking, the nonlocal diffusion term $\int_{\mathbb{R}}J_1(x-y)u(y,t)dy-u(x,t)$ is obtained from the assumption that
 an individual of $u$ at location $y$ and time $t$ has probability $J_1(x-y)$ to move to  location $x$ in a time unit.

When the local diffusion terms in \eqref{OE} are replaced by the above nonlocal diffusion terms, as in  Cao et al. \cite{CDLL2019}, the free boundary conditions need to be modified accordingly, and the local diffusion model \eqref{OE} becomes
\begin{equation}\label{model*}
\begin{cases}\displaystyle
u_t=d_1\int_{g(t)}^{h(t)}J_1(x-y)u(y,t)dy-d_1u-a_{11}u&\\
\displaystyle
\hspace{2.7cm}+a_{12}\int_{g(t)}^{h(t)}K(x-y)v(y,t)dy,&t>0,~~x\in(g(t),h(t)),\\
\displaystyle
v_t=d_2\int_{g(t)}^{h(t)}J_2(x-y)v(y,t)dy-d_2v-a_{22}v+G(u),&t>0,~~x\in(g(t),h(t)),\\
\displaystyle
g'(t)=\displaystyle-\mu \int_{g(t)}^{h(t)}\int_{-\infty}^{g(t)}J_1(x-y)u(x,t)dydx&\\
\displaystyle
\hspace{2.8cm}-\mu\rho \int_{g(t)}^{h(t)}\int_{-\infty}^{g(t)}J_2(x-y)v(x,t)dydx,&t>0,\\
\displaystyle
h'(t)=\mu \int_{g(t)}^{h(t)}\int^{\infty}_{h(t)}J_1(x-y)u(x,t)dydx&\\
\displaystyle
\hspace{2.8cm}+\mu\rho \int_{g(t)}^{h(t)}\int^{\infty}_{h(t)}J_2(x-y)v(x,t)dydx,&t>0,\\
u(h(t),t)=u(g(t),t)=v(h(t),t)=v(g(t),t)=0,&t>0,\\
h(0)=-g(0)=h_0,\
u(x,0)=u_0(x),~~~v(x,0)=v_0(x),&x\in[-h_0,h_0].
\end{cases}
\end{equation}
Here, following Capasso \cite{Capasso1984}, we have also replaced the term $c_{12}v(x,t)$ in \eqref{OE} by
 \begin{equation}\label{nonlocal}
 \displaystyle a_{12}\int_{g(t)}^{h(t)}K(x-y)v(y,t)dy
 \end{equation}
  in \eqref{model*}, to represent the more realistic and consistant assumption that
the contribution to the growth of the infectious agents at location $x$ by the
infective humans is also nonlocal. Indeed, in most societies, the human wastes, which are the main sources of the infectious agents in this model,
are moved away immediately from the exact location of the infective humans. Therefore the contribution to $u$ at location $x$ is usually from infective humans in a neighbourhood of $x$, which is better represented by a nonlocal operator.  If $K(x)$ is equal to the Dirac delta function $\delta_0(x)$, then
\[
\mbox{$\displaystyle a_{12}\int_{g(t)}^{h(t)}K(x-y)v(y,t)dy=a_{12}v(x,t)$ for $x\in (g(t), h(t))$;}
\]
 we will assume that $K$ satisfies {\bf (J)} and so the term \eqref{nonlocal} represents a true nonlocal effect.

 The initial functions
$u_0(x)$ and $v_0(x)$ in \eqref{model*} are assumed to
satisfy
\begin{equation}\label{Assumption}
\left\{\begin{aligned}
& u_0\in C[-h_0,h_0],~~u_0(\pm h_0)=0~~\text{and}~~u_0(x)>0~~\mbox{for}~~x\in (-h_0,h_0),\\
& v_0\in C[-h_0,h_0],~~v_0(\pm h_0)=0~~\text{and}~~v_0(x)>0~~\mbox{for}~~x\in (-h_0,h_0).
\end{aligned}\right.
\end{equation}

 The main purpose of this paper is to obtain a good understanding of the long-time dynamics of \eqref{model*}. Several special cases  or closely related cases of \eqref{model*} have already been considered in the recent literature. When $d_2=\rho=0$ and $K(x)=\delta_0(x)$ so the
term $\displaystyle a_{12}\int_{g(t)}^{h(t)}K(x-y)v(y,t)dy$ is changed back to $a_{12}v(x,t)$,   problem \eqref{model*} becomes a precise nonlocal diffusion version of \eqref{OE}
with $d_2=0$; for this special case,
Zhao et al. \cite{zhaomeng2020JDE} proved that the problem is well posed mathematically and the long-time dynamics of the model is governed by a spreading-vanishing dichotomy, similar to its local diffusion version, but differences arise in the criteria distinguishing the alternatives in the dichotomy.
More striking differences appear when results of  Du and Ni in \cite{dn2022} on the rate of spreading were applied to this case of the model.
It follows from \cite{dn2022} that when $J_1$  satisfies an additional threshold condition, then when spreading persisits,
$-\lim_{t\to\infty}g(t)/t=\lim_{t\to\infty}h(t)/t$ exists and is a finite number determined by an associated semi-wave problem,
while $-\lim_{t\to\infty}g(t)/t=\lim_{t\to\infty}h(t)/t=\infty$ when this threshold condition is not satisfied by $J_1$ (note that since $d_2=\rho=0$, $J_2$ does not appear in this case of the model). This latter case is known as ``accelerated spreading", and was first shown to occur for a nonlocal free boundary problem for the Fisher-KPP model in \cite{dlz2021}.

These results were extended by Zhao et al. \cite{zhaomeng2020CPAA} and Du et al. \cite{dlnz} to the case of \eqref{model*} with $d_2=\rho=0$, and by Chang and Du \cite{cd2022} for \eqref{model*} with
$d_2>0, \rho\geq 0$ and the
term $\displaystyle a_{12}\int_{g(t)}^{h(t)}K(x-y)v(y,t)dy$  replaced by $a_{12}v(x,t)$. Moreover, the rate of accelerated spreading was given in \cite{cd2022}
when $J_i(x)\sim |x|^{-\gamma}$ for $|x|\gg 1$ with $\gamma\in (1, 2]$ (see Theorem 1.6 there for details, which follow from more general results due to Du and Ni).

A West Nile virus model with nonlocal diffusion and free boundaries was studied by Du and Ni \cite{DuNi2020N}, which share some similarities to \eqref{model*}
in  that it is also  a cooperative evolution system, and its spreading speed can be treated by the same genera approach in \cite{dn2022}. The  nonlocal
term $\displaystyle a_{12}\int_{g(t)}^{h(t)}K(x-y)v(y,t)dy$  in \eqref{model*} renders it outside the scope of \cite{dn2022}, and moreover, it causes several technical difficulties which require very different  treatment from the earlier works.

There is a large literature on related
 nonlocal diffusion problems over the entire spatial space $\mathbb R$ or $\mathbb R^N$, without involving any free boundaries; a small sample of these works
 can be found in  \cite{bao2016,bao2017,Bates1997,Bates2007,Berestycki2016JMB,Berestycki2016JFA,Coville2007,Garnier2011,Hutson2003JMB,Kao2010
,FangLi2017DCDS,WLi2010,Rawal2012,Yagisita2009} and the references therein.
\medskip

Let us now describe the main results for \eqref{model*} precisely.

\begin{theorem}[Global existence and uniqueness]\label{uniexist}
Suppose that $J_1(x),J_2(x)$ and $K(x)$ satisfy assumption {\bf(J)}, $G$ satisfies $(G1)$-$(G2)$,
and $(u_0(x),v_0(x))$ satisfies \eqref{Assumption}. Then problem \eqref{model*} admits a unique solution defined
for all $t>0$.
\end{theorem}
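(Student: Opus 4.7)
The plan is to prove local existence and uniqueness of \eqref{model*} on a short time interval by a two-level fixed-point argument, and then to extend the solution to all $t>0$ via uniform $L^\infty$ bounds on $(u,v)$ together with a Gronwall control on the width $h(t)-g(t)$. This follows the general template used for nonlocal free boundary problems in \cite{CDLL2019,zhaomeng2020JDE,cd2022}, with one genuinely new technical point: the cross-convolution $a_{12}\int_{g(t)}^{h(t)}K(x-y)v(y,t)\,dy$ in the $u$-equation replaces the pointwise term $a_{12}v(x,t)$ of earlier models, so the iteration cannot first solve $v$ in terms of $u$ and then solve $u$ as in \cite{cd2022}; the $(u,v)$-equations must be contracted simultaneously.

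For the local step I choose $T>0$ small and a constant $K_0>0$ to be adjusted later, and introduce the closed convex set
$$\mathcal{H}_T=\bigl\{(g,h)\in C([0,T])^2 : h(0)=-g(0)=h_0,\ h\text{ non-decreasing},\ g\text{ non-increasing},\ 0\le h(t)-h_0\le K_0t,\ 0\le -h_0-g(t)\le K_0t\bigr\}.$$
Given $(g,h)\in\mathcal{H}_T$, I extend $u,v$ by zero outside $(g(t),h(t))$ and rewrite the $(u,v)$-subsystem on $\overline{\Omega_T}=\{(x,t):g(t)\le x\le h(t),\,0\le t\le T\}$ as a coupled integral equation by variation of parameters in the linear decay part $-(d_i+a_{ii})\,\cdot$. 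Because $J_i,K$ are bounded with unit mass and $G\in C^1$, the right-hand side is Lipschitz in $(u,v)$ with a constant depending only on the known data and an a priori bound $M$ on $(u,v)$; Banach's theorem then yields a unique $(u,v)\in C(\overline{\Omega_T})$ for $T$ small, and the boundary condition $u=v=0$ on $x=g(t),h(t)$ follows from the extension-by-zero convention together with the vanishing of $u_0,v_0$ at $\pm h_0$. Substituting this $(u,v)$ into the right-hand sides of the free-boundary equations defines a map $\Gamma:(g,h)\mapsto(\tilde g,\tilde h)$ from $\mathcal{H}_T$ into itself; the continuous dependence of $(u,v)$ on $(g,h)$ together with the uniform control of the double integrals (using $\|J_i\|_\infty<\infty$) makes $\Gamma$ a strict contraction in $C([0,T_0])^2$ after a further shrinking of $T$, and its unique fixed point is the local solution.

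For the global extension I compare $(u,v)$ pointwise with the scalar ODE system $U'=-a_{11}U+a_{12}V$, $V'=-a_{22}V+G(U)$ started from $(\|u_0\|_\infty,\|v_0\|_\infty)$: once $u,v$ are extended by zero outside the moving interval, each convolution $J_i\ast u$, $J_i\ast v$, $K\ast v$ is bounded by the relevant $L^\infty$ norm (unit-mass kernels), so each $(x,t)$-fibre of \eqref{model*} is dominated by this ODE, which by $(G2)$ has uniformly bounded trajectories, giving $u,v\le M$ throughout the lifespan. Using $\int_{h(t)}^\infty J_i(x-y)\,dy\le 1$, the free-boundary equations then yield
$$h'(t)\le \mu(1+\rho)M\,\bigl(h(t)-g(t)\bigr),\qquad -g'(t)\le \mu(1+\rho)M\,\bigl(h(t)-g(t)\bigr),$$
so Gronwall gives $h(t)-g(t)\le 2h_0\,e^{2\mu(1+\rho)Mt}$, which stays finite on every bounded interval. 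Hence $[g(t),h(t)]$ cannot blow up in finite time, the local solution can be continued to all $t>0$, and uniqueness on each step is inherited from the local contraction. I expect the simultaneous $(u,v)$-contraction forced by the new $K$-term to be the only genuinely new difficulty; the subtler effects of $K$ will reappear in the later spreading-vanishing analysis rather than at the well-posedness stage.
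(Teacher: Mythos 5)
Your two-level fixed-point scheme (inner: solve the $(u,v)$-subsystem for a frozen $(g,h)$; outer: contract $(g,h)\mapsto(\tilde g,\tilde h)$ via the front equations) and the subsequent $L^\infty$-comparison with the spatially homogeneous ODE plus a Gronwall control of $h(t)-g(t)$ to continue the solution globally is the same strategy the paper carries out in Section 5, with the inner step written there as a parametrised ODE in $t$ for each $x$ rather than as a variation-of-parameters integral equation; these are the same thing. One refinement you should build in: your class $\mathcal{H}_T$ requires only that $h$ be non-decreasing and $g$ non-increasing, so a front could stall on a subinterval of $[0,T]$, making the first-crossing time map $x\mapsto t_x$ (which you implicitly use when extending by zero and solving the fibre ODE from the moment $x$ enters $(g(t),h(t))$) discontinuous and forcing you to reconcile the boundary condition $u(h(t),t)=0$ on the stalled segment with the strictly positive nonlocal source. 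The paper's $G^T\times H^T$ imposes a strictly positive lower bound on all difference quotients of $h$ and $-g$, which fixes this, and that bound is preserved by the iteration because $u,v>0$ in the interior makes the front integrals strictly positive; you should state the corresponding lower bound $\inf(h(t_2)-h(t_1))/(t_2-t_1)\geq \mu\delta_1>0$ (and likewise for $-g$) explicitly in your invariant set.
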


\begin{theorem}[Spreading-vanishing dichotomy]\label{svd}
Under the conditions of Theorem \ref{uniexist}, suppose that $(u,v,g,h)$ is the solution of \eqref{model*}; then
  \begin{equation*}
   h_{\infty}:=\lim_{t\rightarrow \infty}h(t)\in (h_0, \infty]~\mbox{and}~g_{\infty}:=\lim_{t\rightarrow \infty}g(t)\in[-\infty, -h_0) \mbox{ exist},
\end{equation*}
and one of the following cases must occur:
\begin{itemize}
  \item [{\rm(i)}] \textbf{Vanishing}:
\begin{equation*}
  h_{\infty}-g_{\infty}<\infty ~~\text{and}~~ \lim_{t\rightarrow \infty}(u(x,t),v(x,t))=(0,0)~~\mbox{uniformly for}~~x\in[g(t),h(t)];
\end{equation*}
  \item [{\rm(ii)}]\textbf{Spreading}:
\begin{equation*}
   h_{\infty}=-g_{\infty}=\infty~~ \text{and}~~ \lim_{t\rightarrow \infty}(u(x,t),v(x,t))=(u^*,v^*) ~~\mbox{locally uniformly in}~~\R.
\end{equation*}
\end{itemize}
\end{theorem}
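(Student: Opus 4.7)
My plan is to prove the dichotomy in three stages, following the scheme developed in \cite{zhaomeng2020JDE,cd2022} for related nonlocal free-boundary problems but adapting it to the cross-convolution term $a_{12}\int_{g(t)}^{h(t)} K(x-y)v(y,t)\,dy$. To begin, note that the integrands in the free-boundary conditions of \eqref{model*} are all nonnegative, so $h'(t) \geq 0$ and $g'(t) \leq 0$; monotonicity then yields the limits $h_\infty \in (h_0,\infty]$ and $g_\infty \in [-\infty,-h_0)$. A uniform bound $u, v \leq M$ follows by comparison with the spatially homogeneous ODE system \eqref{model*ODE2}, which is dissipative thanks to $(G2)$.

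For the vanishing alternative, assume $h_\infty - g_\infty < \infty$. Integrating the free-boundary conditions in time gives
\begin{equation*}
\int_0^\infty \!\! \int_{g(t)}^{h(t)}\!\! \int_{\R \setminus (g(t),h(t))} J_1(x-y)\, u(x,t)\, dy\, dx\, dt \;\leq\; \frac{h_\infty - g_\infty}{\mu},
\end{equation*}
together with an analogous bound for $v$ involving $J_2$. The cleanest route, which I would follow, is to introduce the principal eigenvalue $\lambda_1(\Omega)$ of the linearisation of the coupled system at $(0,0)$ on a bounded interval $\Omega$ with zero exterior condition, and develop a monotonicity-in-$\Omega$ theory in the spirit of \cite{cd2022} showing that $\lambda_1((g_\infty,h_\infty)) \leq 0$ is forced; combined with the integrability above and a time-regularity estimate extracted from the integral form of the equations, this upgrades an $L^1$-type decay into uniform decay, giving $(u,v)\to(0,0)$ uniformly for $x \in [g(t),h(t)]$.

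For the spreading alternative, assume $h_\infty - g_\infty = \infty$. I would first establish that $h_\infty = \infty$ and $g_\infty = -\infty$ both hold. Arguing by contradiction, suppose $h_\infty < \infty$: extract a subsequential limit of $(u(\cdot,t_n), v(\cdot,t_n))$ along some $t_n \to \infty$ to produce a bounded nontrivial stationary solution of the nonlocal system on the half-line $(-\infty, h_\infty)$ vanishing at $x = h_\infty$, and contradict its existence by applying the eigenvalue comparison from the vanishing step to a sufficiently long bounded subinterval, using that $\lambda_1$ becomes positive on large intervals when $R_0 > 1$. Once both boundaries escape to infinity, a squeezing argument closes the proof: the ODE comparison yields $\limsup_{t\to\infty}(u,v) \leq (u^*, v^*)$ uniformly, while for any $L > 0$ one eventually has $[-L,L] \subset (g(t),h(t))$, so comparison from below with the fixed-boundary nonlocal problem on $[-L,L]$ (whose unique positive equilibrium $(u_L^*, v_L^*)$ exists once $L$ exceeds the nonlocal critical length and satisfies $(u_L^*, v_L^*)\to(u^*,v^*)$ as $L\to\infty$) supplies the matching lower bound. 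The hardest step will be ruling out the mixed case $h_\infty < \infty$, $g_\infty = -\infty$: in the nonlocal setting no Hopf-type boundary gradient tool is available, so the contradiction must come from the limiting-profile/eigenvalue argument; moreover the cross-convolution term $K\ast v$ prevents any decoupling of the $u$- and $v$-equations, so the principal eigenvalue theory must be built for the fully coupled linearised operator rather than each equation in isolation, which is the main place the new approach alluded to in the abstract will be needed.
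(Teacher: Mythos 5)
Your overall skeleton agrees with the paper's: monotonicity of $h$ and $-g$ gives the limits $h_\infty, g_\infty$; an eigenvalue theory for the \emph{fully coupled} linearised operator on a bounded interval is the central tool (and yes, the cross-convolution $K*v$ is exactly why this has to be rebuilt rather than borrowed); vanishing hinges on $\lambda_0(g_\infty,h_\infty)\leq 0$; and the spreading alternative is closed by squeezing between the spatially homogeneous ODE system from above and the fixed-boundary nonlocal problem on $[-L,L]$ from below, whose equilibria tend to $(u^*,v^*)$ as $L\to\infty$ (Lemma \ref{l_infty}). That much is on track.

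Two steps of your plan diverge from the paper and are, as stated, incomplete. First, for uniform decay in the vanishing case you propose deriving an $L^1$-in-spacetime bound from integrating the free-boundary ODEs and then upgrading it to uniform decay via a time-regularity estimate. This upgrade is neither needed nor clearly workable: an $L^1$ bound in spacetime does not by itself control $\sup_x u(x,t)$ as $t\to\infty$. The paper's route is shorter and sharper: once $\lambda_0(g_\infty,h_\infty)\leq 0$ is established (by observing that $\lambda_0>0$ would, via Theorem \ref{0steadystate}(i) and the comparison in Lemma \ref{compariprinciple}, force $(u,v)$ to stay above a fixed positive profile on $[g(T),h(T)]$, yielding a strictly positive lower bound for $h'(t)$ as in \eqref{primeh} and contradicting $h_\infty<\infty$), one simply dominates $(u,v)$ by the solution of the fixed-boundary problem on $[g_\infty,h_\infty]$ started from the constants $(\|u_0\|_\infty,\|v_0\|_\infty)$; Theorem \ref{0steadystate}(ii) says that solution goes uniformly to $(0,0)$, and that is the whole argument. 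Second, to rule out the mixed case $h_\infty<\infty$, $g_\infty=-\infty$, you propose extracting a subsequential limit $(u(\cdot,t_n),v(\cdot,t_n))$ to obtain a nontrivial stationary profile on a half-line. In the nonlocal setting this requires a compactness argument you have not indicated, and, more seriously, there is no \emph{a priori} reason the limit is nontrivial; the subsequential limit could be $(0,0)$, in which case you get no contradiction. The paper bypasses all of this (Lemma \ref{vanishing4}): once $h(t)-g(t)>2l^*$ one has $\lambda_0(g(t),h(t))>0$ by Lemma \ref{eigen_proposition}, and \emph{the very same} lower-bound-for-$h'$ argument used in the vanishing step gives $h'(t)\geq c_2>0$, directly contradicting $h_\infty<\infty$. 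So the elliptic-limit machinery you anticipate as the ``hardest step'' is not required; the single derivative estimate does the work twice, once to force $\lambda_0\leq 0$ under vanishing and once to kill the mixed case.
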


\begin{theorem}[Spreading-vanishing criteria]\label{spreading-vanishing}
Let  $R_0$ be given by \eqref{R_0}. Then the alternatives in Theorem \ref{svd} are determined as follows:
\begin{itemize}
  \item [{\rm(i)}] If $R_0\leq 1$, then vanishing always occurs.
  \item [{\rm(ii)}]If $R_0>1$, then there exists a critical length $l^*>0$ independent of the initial data
   $(u_0,v_0, h_0)$ such that
  if $h_0\geq l^*$, then spreading always persists, while if $h_0<l^*$, then
 there exists $\mu^*>0$ depending on $(u_0,v_0)$ such that vanishing happens
when $\mu\in(0,\mu^*]$ and spreading persists when $\mu\in(\mu^*,\infty)$.
\end{itemize}
\end{theorem}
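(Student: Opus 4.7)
The plan is to treat the two parts separately, with part (ii) absorbing essentially all of the work. In both cases the decisive analytical object is the principal eigenvalue $\lambda_p(L)$ of the linearisation of \eqref{model*} at $(0,0)$ restricted to an interval of half-length $L$, as in the local-diffusion analysis of \cite{WangDu2021} and the nonlocal predecessors \cite{zhaomeng2020JDE, cd2022}. For part (i), when $R_0\le 1$ the positive equilibrium $(u^*,v^*)$ required by the spreading alternative of Theorem \ref{svd} does not exist (by \eqref{u^*v^*}), so vanishing must hold; as a self-contained back-up, the spatially constant pair $(\bar u(t),\bar v(t))$ defined by the ODE system \eqref{model*ODE2} with $(\bar u(0),\bar v(0))=(\|u_0\|_\infty,\|v_0\|_\infty)$ is a super-solution of \eqref{model*} on $[g(t),h(t)]$ (using $\int J_i\le 1$ and $\int K\le 1$), and hypotheses $(G1)$--$(G2)$ guarantee $(\bar u,\bar v)\to (0,0)$ whenever $R_0\le 1$.

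For part (ii), I would introduce $\lambda_p(L)$ as the principal eigenvalue of the linearised cooperative system on $(-L,L)$,
\begin{equation*}
\left\{\begin{aligned}
&d_1\!\int_{-L}^{L}\!J_1(x-y)\phi(y)\,dy-(d_1+a_{11})\phi+a_{12}\!\int_{-L}^{L}\!K(x-y)\psi(y)\,dy=\lambda\phi,\\
&d_2\!\int_{-L}^{L}\!J_2(x-y)\psi(y)\,dy-(d_2+a_{22})\psi+G'(0)\phi=\lambda\psi,
\end{aligned}\right.
\end{equation*}
and establish its existence via a Krein--Rutman argument on $C([-L,L])\times C([-L,L])$, the required positivity coming from $J_i(0)>0$ and $K(0)>0$ in \textbf{(J)}. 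Testing against positive eigenfunctions on nested intervals gives that $\lambda_p(L)$ is continuous and strictly increasing in $L$; as $L\to 0^+$ it tends to a strictly negative limit, and testing with constants as $L\to\infty$ reduces the limiting eigenvalue to the Perron root of $\bigl(\begin{smallmatrix}-a_{11}&a_{12}\\ G'(0)&-a_{22}\end{smallmatrix}\bigr)$, which is positive iff $R_0>1$. Under $R_0>1$ there is therefore a unique $l^*\in(0,\infty)$ with $\lambda_p(l^*)=0$, $\lambda_p(L)<0$ on $(0,l^*)$ and $\lambda_p(L)>0$ on $(l^*,\infty)$.

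With $l^*$ in hand, if $h_0\ge l^*$ then the strict boundary expansion guarantees $[g(t),h(t)]\supseteq(-L,L)$ for some $L>l^*$ and all large $t$; a small multiple of the associated positive eigenpair (with $\lambda_p(L)>0$) is then a sub-solution bounded below by a positive constant on $(-L,L)$, ruling out the vanishing alternative of Theorem \ref{svd} and forcing spreading. If $h_0<l^*$, a comparison argument adapted to the free boundary (as in \cite{CDLL2019}) gives monotonicity of $u,v,-g,h$ in $\mu$, so I set $\mu^*:=\sup\{\mu>0:\text{vanishing occurs}\}$. For small $\mu$, a compactly supported stationary super-solution built from the eigenpair at length slightly below $l^*$ keeps $h_\infty-g_\infty<2l^*$ and forces vanishing; for large $\mu$, the boundary integrals in the Stefan conditions drive $h(t)-g(t)>2l^*$ in finite time, after which the previous case applies and yields spreading. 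Continuous dependence in $\mu$ then places vanishing on $(0,\mu^*]$ and spreading on $(\mu^*,\infty)$.

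The main obstacle I expect is the principal-eigenvalue step. The extra nonlocal coupling $a_{12}K\ast\,\cdot\,$ in the $u$-equation --- absent in \cite{cd2022, zhaomeng2020JDE}, where the coupling into $u$ is pointwise --- means that the irreducibility and strict positivity needed for Krein--Rutman, the strict monotonicity of $\lambda_p(L)$ in $L$, and the maximum-principle-type propagation of positivity used in the sub-solution argument, all have to be recovered from $K(0)>0$ and continuity of $K$ rather than from a direct pointwise coupling.
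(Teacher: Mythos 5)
Your high-level strategy --- reduce to a principal eigenvalue $\lambda_p(L)$ for the linearised nonlocal system, show it is increasing in $L$ and changes sign at some $l^*$ iff $R_0>1$, then combine with sub/super-solution and monotonicity-in-$\mu$ arguments --- is exactly the route the paper takes. However, there are two genuine gaps and one unfinished step.

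\textbf{(1) Part (i) is incomplete.} Your first argument (``vanishing must hold because $(u^*,v^*)$ does not exist'') is circular: the statement of Theorem \ref{svd} that one of the two alternatives holds is itself established via the lemmas that prove Theorem \ref{spreading-vanishing}, in particular the lemma asserting (i). And your ``back-up'' via the ODE super-solution $(\bar u,\bar v)$ shows only that $(u,v)\to(0,0)$; it does \emph{not} show $h_\infty-g_\infty<\infty$, which is the other half of the vanishing alternative. Decay of $(u,v)$ to zero does not by itself bound the growth of $h-g$, because the free boundary speeds are proportional to $(h(t)-g(t))$ times the sup-norm of the solution, and when $R_0=1$ the ODE decay is merely algebraic. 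The paper proves $h_\infty-g_\infty<\infty$ directly in Lemma \ref{vanishing2} via an integral identity: computing $\frac{d}{dt}\int_{g(t)}^{h(t)}\bigl[u+\tfrac{a_{12}}{a_{22}}v\bigr]dx$, using $\mathbf{(J)}$ to absorb all the boundary fluxes into $-\frac{m_0 d_1}{\mu}[h'(t)-g'(t)]$, and using $R_0\le1$ and $(G2)$ to kill the reaction terms, yielding a uniform bound on $h(t)-g(t)$. Some argument of this type is needed.

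\textbf{(2) ``Testing with constants as $L\to\infty$'' gives the wrong sided bound.} A constant pair $(c_1,c_2)$ is a \emph{super}-solution for \eqref{eigenprobelm1} on $(-L,L)$, because $\int_{-L}^{L}J_i(x-y)\,dy\le 1$ and $\int_{-L}^{L}K(x-y)\,dy\le 1$, so it only shows $\lambda_p(L)\le\lambda^{\rm Perron}$. To obtain the crucial statement $\lambda_p(L)>0$ for large $L$ (when $R_0>1$) one needs a positive \emph{sub}-solution test pair. The paper uses the tent function $\theta_l(x)=\max\{l-|x|,0\}$ and the estimate \eqref{key_l_2}, which is the technical heart of Lemma \ref{eigen_proposition}(ii); this step does not follow from constants.

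\textbf{(3) Spreading needs more than ruling out vanishing.} Your sketch for $h_0\ge l^*$ shows the solution stays bounded below by a positive eigenpair on a fixed interval, which rules out the vanishing alternative. But the spreading alternative requires $(u,v)\to(u^*,v^*)$ locally uniformly, and this demands identifying the long-time limit of the fixed-interval problem with the unique positive steady state $(w_l,z_l)$ and then showing $(w_l,z_l)\to(u^*,v^*)$ as $l\to\infty$ (Lemma \ref{l_infty}). The second of these in particular is not automatic. You correctly flag that the extra $K$-convolution is the main technical hurdle in the eigenvalue analysis, and the paper's treatment of $\mathcal{T}+\alpha\mathbf{I}$ (showing its inverse exists, is strongly positive, and is norm-controlled via a touching-lemma argument) is indeed designed precisely to handle this.
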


When spreading persists, the spreading speed will be determined in a subsequent work. In particular we will find threshold conditions on $J_1$ and $J_2$
which determine whether the speed is finite or infinite.

The rest of the paper is organised as follows. In Section 2, we give some comparison principles to be used throughout the paper; the proofs of some of them depend on the existence uniqueness theorem (Theorem 1.1), whose proof is postponed to the last section of the paper, Section 5, since it is only a relatively simple modification of existing proofs in the literature. Section 3 treats the corresponding fixed boundary problem and the associated eigenvalue problem. Here the nonlocal term $\int_{-l}^l K(x-y)v(y,t)dy$ induces nontrivial technical difficulties, especially for the eigenvalue problem. A new approach is used in this section to obtain the desired results, which we believe should have applications elsewhere. In Section 4, we make use of the results in Section 3 to prove the spreading-vanishing dichotomy and obtain the sharp criteria
governing this dichotomy. As already mentioned, Section 5 is devoted to the proof of Theorem 1.1.

\section{Some basic results}

In this section, we collect some comparison results, which form the basis of this research. For convenience of later discussions, we introduce some notations first.
For any given $\mathbf{u}=(u_1,v_1),\mathbf{v}=(v_1,v_2)\in \mathbb{R}^2$, we denote
\[
\mathbf{u}\preceq(\succeq)\mathbf{v}\mbox{ if }u_i\leq (\geq) v_i,~i=1,2;\ \
\mathbf{u}\llp (\ggs) \mathbf{v}\mbox{ if }u_i<(>)v_i,~i=1,2.
\]
For given $T,\, h_0>0$ and $(u_0,v_0)$ satisfying \eqref{Assumption}, we define
\begin{equation}\label{definition_original}
\begin{split}
&H^T:=\Big\{h\in C([0,T]):~~h(0)=h_0,~~\inf_{0\leq t_1<t_2\leq T}\frac{h(t_2)-h(t_1)}{t_2-t_1}>0\Big\},\\
&G^T:=\Big\{g\in C([0,T]):~~g(0)=-h_0,~~\sup_{0\leq t_1<t_2\leq T}\frac{g(t_2)-g(t_1)}{t_2-t_1}<0\Big\}.\\
\end{split}
\end{equation}
For $g\in G^T$ and $h\in H^T$, we write
\begin{equation*}
\begin{split}
\Omega_T&=\Omega_T(g,h):=\Big\{(t,x)\in\R^2:~0<t\leq T,~g(t)<x<h(t)\Big\},\\
X^T&=X^T(g,h, u_0,v_0)\\
&:=\Big\{(\phi,\psi)\in (C(\overline{\Omega}_T))^2:~~(\phi(\cdot,0),\psi(\cdot,0))=(u_0,v_0) \mbox{ in }[-h_0,h_0],\ (\phi,\psi)\geq0\mbox{ in }\Omega_T,\\
 & \hspace{4.5cm} \phi(x,t)=\psi(x,t)=0 \mbox{ for  } x\in\{g(t), h(t)\} \mbox{ and } t\in [0,T]\Big\}.\\
\end{split}
\end{equation*}

\begin{lemma}\label{maximalprinciple}
Suppose that  {\bf (J)} is satisfied by $J_1, J_2$ and $K$. Let $T>0$,  $(g,h)\in G^T\times H^T$, $b_{ij}\in L^{\infty}(\overline{\Omega}_T)$ for
$i,j=1,2$, and $(u,v),~(u_t,v_t)\in C(\overline{\Omega}_T)$. If
\begin{equation*}\label{model1}
\begin{cases}\displaystyle
u_t\geq d_1\int_{g(t)}^{h(t)}J_1(x-y)u(y,t)dy-d_1u+b_{11}u&\\
\displaystyle
\hspace{2.7cm}+b_{12}\int_{g(t)}^{h(t)}K(x-y)v(y,t)dy,&0<t\leq T,~~x\in(g(t),h(t)),\\
\displaystyle
v_t\geq d_2\int_{g(t)}^{h(t)}J_2(x-y)v(y,t)dy-d_2v+b_{21}u+b_{22}v,&0<t\leq T,~~x\in(g(t),h(t)),\\
u(x,t)\geq 0,~~v(x,t)\geq0,&0<t\leq T,~~x=h(t)\mbox{ or }g(t),\\
u(x,0)\geq 0,~~~v(x,0)\geq0,&x\in[-h_0,h_0],
\end{cases}
\end{equation*}
and $b_{12}\geq 0$, $b_{21}\geq 0$, then $(u(x,t),v(x,t))\succeq(0,0)$ in $\overline{\Omega}_T$.
Moreover, if additionally $u(x,0)\not\equiv 0$
and $v(x,0)\not\equiv 0$ in $[-h_0,h_0]$, then $(u(x,t),v(x,t))\ggs(0,0)$ in $\Omega_T$.
\end{lemma}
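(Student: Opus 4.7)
The plan is to prove the nonnegativity $(u,v)\succeq(0,0)$ first by a perturbation trick at a first-touching time, and then, under the extra assumption $u_0,v_0\not\equiv 0$, to upgrade to strict positivity using a Duhamel-type lower bound together with the positivity of the kernels at the origin. Set $\tilde u:=u+\varepsilon e^{Kt}$ and $\tilde v:=v+\varepsilon e^{Kt}$ for $\varepsilon>0$ and $K$ larger than $d_1+d_2+\max_{i,j}\|b_{ij}\|_{\infty}+1$. Since $\int_{g(t)}^{h(t)} J_i(x-y)\,dy\leq 1$ and $\int_{g(t)}^{h(t)} K(x-y)\,dy\leq 1$, a short calculation shows $(\tilde u,\tilde v)$ satisfies the same system with an extra term $\geq\varepsilon e^{Kt}>0$ on each right-hand side. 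Define
\[
t^*:=\inf\bigl\{t\in[0,T]:\tilde u(x,t)=0\text{ or }\tilde v(x,t)=0\text{ for some }x\in[g(t),h(t)]\bigr\}.
\]
Since $\tilde u,\tilde v\geq\varepsilon$ at $t=0$ and $\tilde u,\tilde v\geq\varepsilon e^{Kt}>0$ on the moving boundary, $t^*>0$ (if finite) and any touching point $x^*$ lies in $(g(t^*),h(t^*))$; minimality in $t$ combined with the strict inequality and the nonnegativity of $\tilde u,\tilde v$ on $[0,t^*]$ contradicts $\tilde u_t(x^*,t^*)\leq 0$. Hence $(\tilde u,\tilde v)\succeq(0,0)$, and letting $\varepsilon\to 0^+$ yields $(u,v)\succeq(0,0)$.

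For the strict positivity, assume $u_0,v_0\not\equiv 0$, so by \eqref{Assumption} both are positive on $(-h_0,h_0)$. Pick $C>d_1+d_2+\max_{i,j}\|b_{ij}\|_{\infty}$ and drop on the right all nonnegative terms other than the two nonlocal integrals; multiplying by $e^{C\tau}$ and integrating in $\tau\in[s,t]$ along a vertical segment $\{x\}\times[s,t]\subset\overline{\Omega}_T$ yields
\[
u(x,t)\geq u(x,s)e^{-C(t-s)}+\int_s^t e^{-C(t-\tau)}\Bigl[d_1\!\!\int_{g(\tau)}^{h(\tau)}\!\!J_1(x-y)u(y,\tau)\,dy+b_{12}\!\!\int_{g(\tau)}^{h(\tau)}\!\!K(x-y)v(y,\tau)\,dy\Bigr]d\tau,
\]
together with the analogous bound for $v$. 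For $x\in(-h_0,h_0)$ the strict monotonicity of $g,h$ puts $\{x\}\times[0,T]$ inside $\overline{\Omega}_T$, so taking $s=0$ gives $u(x,t)\geq u_0(x)e^{-Ct}>0$, and similarly $v(x,t)>0$.

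For $x\in[g(t),h(t)]\setminus(-h_0,h_0)$, set $\tau(x):=\inf\{s\geq 0:x\in[g(s),h(s)]\}$, at which point $u(x,\tau(x))=0$ (either the free-boundary condition or the vanishing initial datum at $\pm h_0$), so the Duhamel inequality with $s=\tau(x)$ reduces to the integral term. Since $J_1$ is continuous with $J_1(0)>0$, there exists $\delta>0$ such that $J_1(z)>0$ for $|z|<\delta$; for any such $x$ within distance $\delta$ of the region where positivity has already been established, one can pick a $y$ in that region with $J_1(x-y)>0$, making the integrand strictly positive on a set of positive $(y,\tau)$-measure and hence $u(x,t)>0$. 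Iterating this bootstrap in $\delta$-increments covers $[g(T),h(T)]$ in finitely many steps since $h(T)-g(T)<\infty$, and the same argument (using $K(0)>0$ or $J_2(0)>0$, together with $b_{21}u\geq 0$) gives $v(x,t)>0$. The main obstacle will be precisely this bookkeeping: each $\delta$-enlargement reaches into the moving strip at a strictly later entry time $\tau(x)$, so one has to verify that the positivity produced at the previous iteration is available throughout the whole integration interval $[\tau(x),t]$ that contributes to the Duhamel integral, which is why the uniform lower bound in the previous paragraph must be in place before the bootstrap can be launched.
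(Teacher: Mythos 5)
Your nonnegativity argument is correct and is a genuinely different route from the paper's: you add the perturbation $\varepsilon e^{Kt}$ so that the differential inequality becomes strict, find the first touching time $t^*$, and obtain a contradiction by evaluating the inequality at the interior touching point (where the inner nonlocal integrals are $\geq 0$ by continuity, so the right side is $\geq \varepsilon e^{Kt^*}>0$ while the left is $\tilde u_t(x^*,t^*)\leq 0$). The paper instead rescales $f_i=e^{kt}(u,v)$, supposes the infimum $f_m$ over a small time strip $[0,T_0]$ is negative, and derives $f_m/2\geq f_m/4$ by integrating from the entry time of the extremal point. Your version avoids the time-strip iteration and the explicit choice of $T_0$, and is arguably cleaner for the nonnegativity step; both hinge on the same mechanism (boundary and initial positivity force an interior temporal minimum, contradicted by the nonlocal term).

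The strict-positivity part, however, has a genuine gap. You write ``assume $u_0,v_0\not\equiv 0$, so by \eqref{Assumption} both are positive on $(-h_0,h_0)$,'' and then read off $u(x,t)\geq u_0(x)e^{-Ct}>0$ for all $x\in(-h_0,h_0)$. But \eqref{Assumption} is \emph{not} among the hypotheses of Lemma \ref{maximalprinciple}; the lemma only assumes $u(x,0),v(x,0)\geq 0$ and $\not\equiv 0$. This matters in practice: the lemma is applied in the proof of the comparison principle to differences such as $\omega=\bar u-u_\epsilon$, whose initial trace is merely $\geq,\not\equiv 0$ and may vanish on large parts of $(-h_0,h_0)$. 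With only $\not\equiv 0$, your Duhamel bound from $s=0$ gives positivity only on the (nonempty, open) set where $u_0>0$, so the $\delta$-bootstrap must be launched from that set and iterated \emph{inside} $(-h_0,h_0)$ as well as outward, using the nonlocal integral in the Duhamel formula (with $s=0$) to propagate positivity across the whole initial interval before moving into the expanding strip. This is fixable but is precisely where your ``uniform lower bound'' shortcut breaks down. The paper's route avoids the issue entirely: it supposes $u(\tilde x,\tilde t)=0$ at an interior point, shows via the boundary of $\{u(\cdot,\tilde t)>0\}$ and the strict positivity of $\int J_1 f_1$ there that $u(\cdot,\tilde t)\equiv 0$, then integrates backward to force $u_0\equiv 0$ --- using only $u_0\not\equiv 0$. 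One further small slip: for the $v$-bootstrap only $J_2(0)>0$ is relevant, not $K(0)>0$; the kernel $K$ does not appear in the $v$-equation, and the $b_{21}u$ term cannot be relied upon for strict positivity since $b_{21}$ may vanish.
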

\begin{proof}
Denote
\begin{equation*}
f_1(x,t)=e^{kt}u(x,t)\mbox{ and }f_2(x,t)=e^{kt}v(x,t)\mbox{ for }(x,t)\in\overline{\Omega}_T
\end{equation*}
with $k$ being some large constant satisfying $k>d_1+d_2+\|b_{11}\|_{\infty}+\|b_{22}\|_{\infty}$.
Simple calculations show that
\begin{equation}\label{model2}
\begin{cases}\displaystyle
(f_1)_t\geq d_1\int_{g(t)}^{h(t)}J_1(x-y)f_1(y,t)dy+(k-d_1+b_{11})f_1&\\
\displaystyle
\hspace{4cm}+b_{12}\int_{g(t)}^{h(t)}K(x-y)f_2(y,t)dy,&0<t\leq T,~~x\in(g(t),h(t)),\\
\displaystyle
(f_2)_t\geq d_2\!\!\int_{g(t)}^{h(t)}\!\!J_2(x\!-\!y)f_2(y,t)dy\!+\!(k\!-\!d_2\!+\!b_{22})f_2+b_{21}f_1,&0<t\leq T,~~x\in(g(t),h(t)),\\
f_1(x,t)\geq 0,~~f_2(x,t)\geq0,&0<t\leq T,~~x\in \{g(t),h(t)\},\\
f_1(x,0)\geq 0,~~~f_2(x,0)\geq0,&x\in[-h_0,h_0].
\end{cases}
\end{equation}

Denote
\begin{equation*}
T_0:=\min\Big\{T,\frac{1}{4(k+\sum_{i,j=1}^2\|b_{ij}\|_{\infty})}\Big\}.
\end{equation*}
We claim that
\begin{equation*}
f_m:=\min\bigg\{\inf_{(x,t)\in\overline{\Omega}_{T_0}}f_1(x,t),~~~\inf_{(x,t)\in\overline{\Omega}_{T_0} }f_2(x,t)\bigg\}\geq0.
\end{equation*}
Otherwise, $f_m<0$. By the continuity of $f_i(x,t)$ $(i=1,2)$ and the last two equations in \eqref{model2}, there exists some $(x_0,t_0)\in{\Omega}_{T_0}$ such that
\begin{equation*}
\frac{f_m}{2}=f_1(x_0,t_0)<0\mbox{ or }\frac{f_m}{2}=f_2(x_0,t_0)<0.
\end{equation*}
Define
\begin{equation*}
t_1=t_1(x_0):=\begin{cases}
t_{x_0}^h, &\mbox{ if } x_0\in(h_0,h(t_0))\mbox{ and } x_0=h(t_{x_0}^h),\\
0,&\mbox{ if } x_0\in[-h_0,h_0],\\
t_{x_0}^g,&\mbox{ if } x_0\in(g(t_0),-h_0)\mbox{ and } x_0=g(t_{x_0}^g).\\
\end{cases}
\end{equation*}
Obviously, $f_i(x_0,t_1)\geq 0$ for $i=1,2$.

If $\frac{f_m}{2}=f_1(x_0,t_0)<0$, then
\begin{equation*}
\begin{split}
&f_1(x_0,t_0)-f_1(x_0,t_1)=\int_{t_1}^{t_0}(f_1)_t(x_0,t)dt\\
\geq&\ d_1\int_{t_1}^{t_0}\int_{g(t)}^{h(t)}J_1(x_0-y)f_1(y,t)dydt+\int_{t_1}^{t_0}\int_{g(t)}^{h(t)}b_{12}(x,t)K(x_0-y)f_2(y,t)dydt\\
&+\int_{t_1}^{t_0}(k-d_1+b_{11}(x,t))f_1(x_0,t)dt\\
\geq&\ d_1\int_{t_1}^{t_0}\int_{g(t)}^{h(t)}J_1(x_0-y)f_mdydt+\int_{t_1}^{t_0}\int_{g(t)}^{h(t)}b_{12}(x,t)K(x_0-y)f_mdydt\\
&+\int_{t_1}^{t_0}(k-d_1+b_{11}(x,t))f_mdt\\
\geq&\ \big(k+\|b_{11}\|_{\infty}+\|b_{12}\|_{\infty}\big)f_m(t_0-t_1).
\end{split}
\end{equation*}
Therefore
\begin{equation*}
\frac{f_m}{2}\geq f_1(x_0,t_0)-f_1(x_0,t_1)\geq\big(k+\|b_{11}\|_{\infty}+\|b_{12}\|_{\infty}\big
)f_mT_0\geq \frac{f_m}{4},
\end{equation*}
which contradicts $f_m<0$.

If $\frac{f_m}{2}=f_2(x_0,t_0)<0$, then similarly
\begin{equation*}
\begin{split}
&f_2(x_0,t_0)-f_2(x_0,t_1)=\int_{t_1}^{t_0}(f_2)_t(x_0,t)dt\\
\geq&\ d_2\int_{t_1}^{t_0}\int_{g(t)}^{h(t)}J_2(x_0-y)f_2(y,t)dydt+\int_{t_1}^{t_0}(k-d_2+b_{22}(x,t))f_2(x_0,t)dt\\
&+\int_{t_1}^{t_0}b_{21}(x,t)f_1(x_0,t)dt\\
\geq&\ d_2\int_{t_1}^{t_0}\int_{g(t)}^{h(t)}J_2(x_0-y)f_mdydt+\int_{t_1}^{t_0}(k-d_2+b_{22}(x,t)+b_{21}(x,t))f_mdt\\
\geq&\ \big(k+\|b_{22}\|_{\infty}+\|b_{21}\|_{\infty}\big)f_m(t_0-t_1).
\end{split}
\end{equation*}
So we have
\begin{equation*}
\frac{f_m}{2}\geq f_2(x_0,t_0)-f_2(x_0,t_1)\geq\big(k+\|b_{22}\|_{\infty}+\|b_{21}\|_{\infty}\big)f_mT_0\geq \frac{f_m}{4},
\end{equation*}
which also contradicts $f_m<0$.
The claim is thus proved, and so $f_i(x,t)\geq 0$ in $\overline{\Omega}_{T_0}$ for $i=1,2$.

If $T_0=T$,
then it is clear that $(u(x,t),v(x,t))\succeq(0,0)$ for all $(x,t)\in\overline{\Omega}_T$. If $T_0<T$, by $(u(x,T_0),v(x,T_0))\succeq(0,0)$ in
$[g(T_0),h(T_0)]$, we can replace $(u(x,0),v(x,0))$ with $(u(x,T_0),v(x,T_0))$, and $(0,T_0]$ with $(T_0,T]$, and  then repeat the above process.
 After repeating the process finitely many times, we obtain $(u(x,t),v(x,t))\succeq(0,0)$ for all $(x,t)\in\overline{\Omega}_T$.

 Finally we show that $u(x,t)>0$ in $\Omega_{T}$ if $u(x,0)\not\equiv0$ in $[-h_0,h_0]$. Otherwise, there exists some $(\tilde{x},\tilde{t})\in\Omega_{T}$ such that $u(\tilde{x},\tilde{t})=0$. We claim that this leads to $u(x,\tilde{t})=0$ for $x\in(g(\tilde{t}),h(\tilde{t}))$. If not, then there exists
\begin{equation*}
x_1\in(g(\tilde{t}),h(\tilde{t}))\cap\partial\{x\in(g(\tilde{t}),h(\tilde{t})):~u(x,\tilde{t})>0\}
\end{equation*}
and thus $u(x_1,\tilde{t})=0$. It then follows from \eqref{model2} and the definition of $f_1$ that
\begin{equation*}
0\geq (f_1)_t(x_1,\tilde{t})\geq d_1\int_{g(\tilde{t})}^{h(\tilde{t})}J_1(x_1-y)f_1(y,\tilde{t})dy>0,
\end{equation*}
which is impossible. Hence, $f_1(x,\tilde{t})=0$ in $(g(\tilde{t}),h(\tilde{t}))$. In particular, for $x\in[-h_0,h_0]$,
\begin{equation*}
\begin{split}
&-f_1(x,0)=f_1(x,\tilde{t})-f_1(x,0)\\
\geq &\ d_1\int_{0}^{\tilde{t}}\int_{g(t)}^{h(t)}J_1(x-y)f_1(y,t)dydt+\int_{0}^{\tilde{t}}(k-d_1+b_{11}(x,t))f_1(x,t)dt\\
&+\int_{0}^{\tilde{t}}\int_{g(t)}^{h(t)}a_{12}(x,t)K(x-y)f_2(y,t)dydt\geq0,
\end{split}
\end{equation*}
which implies that $u(x,0)\equiv0$ in $[-h_0,h_0]$, a contradiction. The same argument can be applied to obtain $v(x,t)>0$ in $\Omega_{T}$ if $v(x,0)\not\equiv 0$.
The proof is complete.
\end{proof}
\begin{remark}\label{compariprinciple1} It is easily checked that the above proof remains valid if $g(t)\equiv -h_0$ and $h(t)\equiv h_0$. Therefore Lemma \ref{maximalprinciple} still holds for such $g$ and $h$.
\end{remark}

With the help of Lemma \ref{maximalprinciple}, we can follow the approach of \cite{CDLL2019} and \cite{DWZ} to prove Theorem 1.1. Since the proof is very long, and does not
require considerably new ideas, we postpone it to the end of the paper.

\medskip

Using the existence and uniqueness result, we can prove the following comparison principle.

\begin{lemma}\label{compariprinciple}
Assume that assumption {\bf (J)} is satisfied by $J_1$, $J_2$ and $K$, and $G$ satisfies $(G1)$-$(G2)$. Let $(u,v, g, h)$ be the unique solution of \eqref{model*}.
If $ T\in(0,+\infty)$,
$\bar{g},\bar{h}\in C([0,T])$, $\bar{u},\bar{v}, \bar u_t, \bar v_t\in C(\overline{\Omega_{T}(\bar{g},\bar{h})})$  satisfy $\bar{u},\bar{v}\geq 0$ and
\begin{equation}\label{model_upper}
\begin{cases}\displaystyle
\bar{u}_t\geq d_1\int_{\bar{g}(t)}^{\bar{h}(t)}J_1(x-y)\bar{u}(y,t)dy-d_1\bar{u}-a_{11}\bar{u}&\\
\displaystyle
\hspace{2.7cm}+a_{12}\int_{\bar{g}(t)}^{\bar{h}(t)}K(x-y)\bar{v}(y,t)dy,&0<t\leq T,~~x\in(\bar{g}(t),\bar{h}(t)),\\
\displaystyle
\bar{v}_t\geq d_2\int_{\bar{g}(t)}^{\bar{h}(t)}J_2(x-y)\bar{v}(y,t)dy-d_2\bar{v}-a_{22}\bar{v}+G(\bar{u}),&0<t\leq T,~~x\in(\bar{g}(t),\bar{h}(t)),\\
\displaystyle
\bar{g}'(t)\leq\displaystyle-\mu \int_{\bar{g}(t)}^{\bar{h}(t)}\int_{-\infty}^{\bar{g}(t)}J_1(x-y)\bar{u}(x,t)dydx&\\
\displaystyle
\hspace{3.2cm}-\mu\rho \int_{\bar{g}(t)}^{\bar{h}(t)}\int_{-\infty}^{\bar{g}(t)}J_2(x-y)\bar{v}(x,t)dydx,&0<t\leq T,\\
\displaystyle
\bar{h}'(t)\geq\mu  \int_{\bar{g}(t)}^{\bar{h}(t)}\int^{\infty}_{\bar{h}(t)}J_1(x-y)\bar{u}(x,t)dydx&\\
\displaystyle
\hspace{3.2cm}+\mu\rho \int_{\bar{g}(t)}^{\bar{h}(t)}\int^{\infty}_{\bar{h}(t)}J_2(x-y)\bar{v}(x,t)dydx,&0<t\leq T,\\
\bar{u}(x,t)\geq0,~~\bar{v}(x,t)\geq0,&0<t\leq T,~x=\bar{h}(t)\mbox{ or }\bar{g}(t),\\
\bar{g}(0)\leq -h_0,~~\bar{h}(0)\geq h_0,&\\
\bar{u}(x,0)\geq {u}_0(x),~~~\bar{v}(\bar{x},0)\geq v_0(x),&x\in[-h_0,h_0],
\end{cases}
\end{equation}
then
\begin{equation*}
\begin{cases}
h(t)\leq \bar{h}(t),~~g(t)\geq \bar{g}(t)~~~\text{for}~~t\in [0,T],\\
u(x,t)\leq \bar{u}(x,t),~~v(x,t)\leq \bar{v}(x,t) ~~~\text{for} ~~x\in [g(t),h(t)],~~t\in(0,T].
\end{cases}
\end{equation*}
\end{lemma}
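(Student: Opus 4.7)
The plan is a first-crossing-time argument combined with the interior maximum principle of Lemma \ref{maximalprinciple}. I will first reduce to the case of strict initial ordering $\bar{g}(0) < -h_0$ and $\bar{h}(0) > h_0$. Given $(u_0, v_0, h_0)$, approximate by $(u_0^\delta, v_0^\delta)$ satisfying \eqref{Assumption} on $[-h_0 + \delta, h_0 - \delta]$ with $0 \le u_0^\delta \le u_0$, $0 \le v_0^\delta \le v_0$ and $(u_0^\delta, v_0^\delta) \to (u_0, v_0)$ as $\delta \to 0$; let $(u^\delta, v^\delta, g^\delta, h^\delta)$ be the unique solution of \eqref{model*} provided by Theorem \ref{uniexist}. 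Continuous dependence on initial data (which comes out of the proof of Theorem \ref{uniexist} in Section 5) gives convergence to $(u, v, g, h)$, and for $\delta$ small, $g^\delta(0) = -h_0 + \delta > \bar{g}(0)$, $h^\delta(0) = h_0 - \delta < \bar{h}(0)$. It therefore suffices to prove the lemma under strict initial ordering.

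Set
\begin{equation*}
T^* := \sup\bigl\{t \in (0, T] : \bar{g}(s) < g(s)\text{ and } \bar{h}(s) > h(s) \text{ for all } s \in [0, t]\bigr\},
\end{equation*}
which is positive by continuity. Suppose for contradiction $T^* < T$; by symmetry assume $\bar{h}(T^*) = h(T^*)$ while $\bar{g}(T^*) < g(T^*)$. For every $t \in [0, T^*]$ the inclusion $(g(t), h(t)) \subset (\bar{g}(t), \bar{h}(t))$ holds, and since $\bar{u}, \bar{v} \ge 0$ and $J_1, J_2, K \ge 0$, the two differential inequalities in \eqref{model_upper} remain valid when the convolution integrals are restricted from $(\bar{g}, \bar{h})$ to $(g, h)$. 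Setting $w := \bar{u} - u$, $z := \bar{v} - v$ and using $G(\bar{u}) - G(u) = c(x, t)\, w$ with $c := G'(\xi) \ge 0$, the pair $(w, z)$ satisfies the hypotheses of Lemma \ref{maximalprinciple} on $\Omega_{T^*}(g, h)$ with $b_{12} = a_{12} \ge 0$ and $b_{21} = c \ge 0$, nonnegative boundary values (since $u = v = 0$ while $\bar{u}, \bar{v} \ge 0$ at $x = g(t), h(t)$), and nonnegative initial values. Consequently $\bar{u} \ge u$ and $\bar{v} \ge v$ throughout $\overline{\Omega_{T^*}(g, h)}$.

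Next compare $\bar{h}'$ with $h'$ at $T^*$. Since $\bar{h} - h$ is nonnegative on $[0, T^*]$, positive on $[0, T^*)$, and zero at $T^*$, the left derivative gives $\bar{h}'(T^*) \le h'(T^*)$. Conversely, evaluating the $\bar{h}'$-inequality in \eqref{model_upper} at $T^*$, splitting the outer integral as $\int_{\bar{g}(T^*)}^{\bar{h}(T^*)} = \int_{\bar{g}(T^*)}^{g(T^*)} + \int_{g(T^*)}^{h(T^*)}$ (using $\bar{h}(T^*) = h(T^*)$), dropping the first nonnegative piece, and applying the interior comparison $\bar{u} \ge u$, $\bar{v} \ge v$ on $(g(T^*), h(T^*))$, yields $\bar{h}'(T^*) \ge h'(T^*)$.

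The main obstacle is that these two inequalities force only \emph{equality}, not a contradiction, because the discarded fringe contribution from $(\bar{g}(T^*), g(T^*))$ may vanish if $\bar{u}$ and $\bar{v}$ happen to be zero there. I will handle this by inserting a preliminary perturbation of $(\bar{u}, \bar{v}, \bar{g}, \bar{h})$ that converts \eqref{model_upper} into a \emph{strict} system (for example by scaling $(\bar{u}, \bar{v})$ by $1 + \epsilon t$ and enlarging $(\bar{g}, \bar{h})$ by $\mp\epsilon t$, tuning $\epsilon$ so that all four defining inequalities acquire a strict $\epsilon$-gap while the supersolution property is preserved). With a strict supersolution the crossing-time argument above now delivers a direct contradiction at $T^*$, hence $T^* = T$ and the conclusion of the lemma. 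Passing $\epsilon \to 0$ and then $\delta \to 0$ recovers the general statement.
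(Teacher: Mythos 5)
Your framework (reduce to strict initial ordering by approximating the solution's data, then run a first-crossing-time argument with Lemma \ref{maximalprinciple}) is the same as the paper's, and your Step~1 is close to the paper's choice: they replace $(u_0,v_0,h_0,\mu)$ by the shrunk quadruple $\big(u_0^\epsilon,v_0^\epsilon,h_0(1-\epsilon),\mu(1-\epsilon)\big)$. But your diagnosis in Step~3--4 that ``these two inequalities force only equality'' is where you go wrong, and the Step~5 fix you invent to compensate is both unnecessary and unsound.

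The strictness you are looking for is already available after Step~1. Since $u_0^\delta<u_0\leq\bar u(\cdot,0)$ and $v_0^\delta<v_0\leq\bar v(\cdot,0)$ on $(-h_0^\delta,h_0^\delta)$, the differences $(\omega,\nu)=(\bar u-u^\delta,\,\bar v-v^\delta)$ have initial data $\geq0$, $\not\equiv0$. The ``moreover'' clause of Lemma~\ref{maximalprinciple} then gives $(\omega,\nu)\ggs(0,0)$ in $\Omega_{T^*}(g^\delta,h^\delta)$, i.e.\ the interior comparison is \emph{strict}, not merely nonnegative. Plugging this into the derivative comparison at $T^*$ gives
\begin{equation*}
\bar h'(T^*)-(h^\delta)'(T^*)\ \geq\ \mu\!\int_{g^\delta(T^*)}^{h^\delta(T^*)}\!\!\int_{h^\delta(T^*)}^{\infty}\!J_1(x-y)(\bar u-u^\delta)(x,T^*)\,dydx+\mu\rho\!\int\!\!\int J_2(\bar v-v^\delta)\ >\ 0,
\end{equation*}
where the last strict inequality uses $J_1(0)>0$, continuity of $J_1$, and $\bar u>u^\delta$ on the open interval; this contradicts the left-derivative bound $\bar h'(T^*)\leq(h^\delta)'(T^*)$. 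So the ``fringe'' contribution from $(\bar g(T^*),g^\delta(T^*))$ is irrelevant: the strictness lives in the interior. This is precisely how the paper closes the argument (they also shrink $\mu$, which is a harmless extra safeguard but not what produces the contradiction).

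Your Step~5 perturbation, as proposed, does not produce a strict supersolution. Two concrete failures. First, enlarging $(\bar g,\bar h)$ to $(\bar g-\epsilon t,\bar h+\epsilon t)$ requires extending $\bar u,\bar v$ to the new strip; any extension with $(\bar u_\epsilon)_t\equiv0$ there (e.g.\ extension by zero or by boundary values) violates the $\bar u$-inequality, because the right-hand side $d_1\int J_1\bar u_\epsilon+a_{12}\int K\bar v_\epsilon$ sees the positive interior values and is strictly positive in the new strip for $t>0$. Second, multiplying $(\bar u,\bar v)$ by $1+\epsilon t$ scales the right-hand side of the $\bar h'$-inequality by $1+\epsilon t>1$ while leaving $\bar h'$ unchanged; you need the opposite direction, so this scaling makes that inequality \emph{harder}, not strictly satisfied. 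There is no simple choice of the tuning parameters that fixes both issues, and none is needed: the strong maximum principle already does the job.
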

\begin{proof}
By applying Lemma \ref{maximalprinciple},
we have $\bar{u}(x,t),\bar{v}(x,t)> 0$ for $t\in(0,T]$ and $x\in(\bar{g}(t),\bar{h}(t))$.
Therefore, $-\bar{g}$ and $\bar{h}$ are strictly increasing.

For small $\epsilon>0$, we denote by
$(u_{\epsilon},v_{\epsilon},g_{\epsilon},h_{\epsilon})$  the unique solution of \eqref{model*} with $h_0$ replaced by $h_0^{\epsilon}:=h_0(1-\epsilon)$ , $\mu$ replaced by $\mu_{\epsilon}:=\mu(1-\epsilon)$ and $u_0,v_0$ replaced by $u_0^{\epsilon},v_0^{\epsilon}\in C([-h_0^{\epsilon},h_0^{\epsilon}])$ satisfying
\begin{equation*}
(0,0)\preceq(u_0^{\epsilon}(x),v_0^{\epsilon}(x))\llp(u_0(x), v_0(x)) \mbox{ for }x\in[-h_0^{\epsilon},h_0^{\epsilon}]
\end{equation*}
and
\begin{equation*}
 \Big(u_0^{\epsilon}(\frac{h_0^{\epsilon}}{h_0}x),v_0^{\epsilon}(\frac{h_0^{\epsilon}}{h_0}x) \Big)\to (u_0(x), v_0(x)) \mbox{ as } \epsilon\to 0 \mbox{ in $(C([-h_0, h_0]))^2$}.
\end{equation*}

We claim $\bar{g}(t)<g_{\epsilon}(t)<h_{\epsilon}(t)<\bar{h}(t)$ in $[0,T]$. Otherwise, since $\bar g(0)<g_\epsilon(0)<h_\epsilon(0)<\bar h(0)$, we can find a first $t^*>0$ such that
$\bar{g}(t^*)=g_{\epsilon}(t^*)$ or $\bar{h}(t^*)=h_{\epsilon}(t^*)$, and $\bar g(t)<g_\epsilon(t)<h_\epsilon(t)<\bar h(t)$ for $t\in [0, t^*)$.
 Without loss of generality, we assume that $\bar{h}(t^*)=h_{\epsilon}(t^*)$ and $\bar{g}(t^*)\leq g_{\epsilon}(t^*)$; then we have $h_{\epsilon}'(t^*)\geq\bar{h}'(t^*)$.
Set $\omega=\bar{u}-u_{\epsilon}$ and $\nu=\bar{v}-v_{\epsilon}$; then $(\omega,\nu)$ satisfies
\begin{equation*}
\begin{cases}\displaystyle
\omega_t\geq d_1\int_{g_{\epsilon}(t)}^{h_{\epsilon}(t)}J_1(x-y)\omega(y,t)dy-d_1\omega-a_{11}\omega&\\
\displaystyle
\hspace{2.7cm}+a_{12}\int_{g_{\epsilon}(t)}^{h_{\epsilon}(t)}K(x-y)\nu(y,t)dy,&0<t\leq t^*,~~x\in(g_{\epsilon}(t),h_{\epsilon}(t)),\\
\displaystyle
\nu_t\geq d_2\int_{g_{\epsilon}(t)}^{h_{\epsilon}(t)}J_2(x-y)\nu(y,t)dy-d_2\nu-a_{22}\nu+G'(\bar{\xi})\omega,&0<t\leq t^*,~~x\in(g_{\epsilon}(t),h_{\epsilon}(t)),\\
\omega(x,t)\geq0,~~\nu(x,t)\geq0,&0<t\leq t^*,~x=g_{\epsilon}(t)\mbox{ or }h_{\epsilon}(t),\\
\omega(x,0)\geq,\not\equiv 0,~~~\nu(x,0)\geq, \not\equiv 0,&x\in[-h_0^{\epsilon},h_0^{\epsilon}],
\end{cases}
\end{equation*}
where $G'(\bar{\xi})\geq0$ with $\bar{\xi}:=\bar{\xi}(x,t)\in[\min\{u_{\epsilon},\bar{u}\},\max\{u_{\epsilon},\bar{u}\}]$. By Lemma \ref{maximalprinciple}, we have $\bar{u}> u_{\epsilon}$ and $\bar{v}>v_{\epsilon}$ for
$t\in(0,t^*]$ and $x\in(g_{\epsilon}(t),h_{\epsilon}(t))$.
It follows  that
\begin{equation*}
\begin{split}
0\geq&\ \bar{h}'(t^*)- h_{\epsilon}'(t^*)\\
\geq&\ \mu\int_{\bar{g}(t^*)}^{\bar{h}(t^*)}\int^{\infty}_{\bar{h}(t^*)}J_1(x-y)\bar{u}(x,t^*)dydx-
\mu_{\epsilon} \int_{ g_{\epsilon}(t^*)}^{ h_{\epsilon}(t^*)}\int^{\infty}_{ h_{\epsilon}(t^*)}J_1(x-y) u_{\epsilon}(x,t^*)dydx\\
&+\mu\rho\int_{\bar{g}(t^*)}^{\bar{h}(t^*)}\int^{\infty}_{\bar{h}(t^*)}J_2(x-y)\bar{v}(x,t^*)dydx
-\mu_{\epsilon}\rho \int_{ g_{\epsilon}(t^*)}^{ h_{\epsilon}(t^*)}\int^{\infty}_{ h_{\epsilon}(t^*)}J_2(x-y) v_{\epsilon}(x,t^*)dydx\\
>&\ \mu_{\epsilon} \int_{ g_{\epsilon}(t^*)}^{ h_{\epsilon}(t^*)}\int^{\infty}_{ h_{\epsilon}(t^*)}J_1(x-y)(\bar{u}(x,t^*)- u_{\epsilon}(x,t^*))dydx\\
&+\mu_{\epsilon}\rho\int_{ g_{\epsilon}(t^*)}^{ h_{\epsilon}(t^*)}\int^{\infty}_{ h_{\epsilon}(t^*)}J_2(x-y)(\bar{v}(x,t^*)- v_{\epsilon}(x,t^*))dydx>0.
\end{split}
\end{equation*}
This is a contradiction. Therefore, $\bar{g}(t)<g_{\epsilon}(t)$ and $h_{\epsilon}(t)<\bar{h}(t)$ hold for $t\in(0,T]$. It then follows that $\bar{u}> u_{\epsilon}$ and $\bar{v}>v_{\epsilon}$ for
$t\in(0,T]$ and $x\in(g_{\epsilon}(t),h_{\epsilon}(t))$.

By the continuous dependence  of the unique solution of \eqref{model*}
on the initial data, we see that $(u_{\epsilon},v_{\epsilon},g_{\epsilon},h_{\epsilon})\rightarrow (u,v,g,h)$ as $\epsilon\to 0$, and the desired inequalities thus follow immediately.
The lemma is proved.
\end{proof}
\begin{remark}\label{compariprinciple-lower}
$(\bar{u},\bar{v},\bar{g},\bar{h})$ in Lemma \ref{compariprinciple} is often called an upper solution of
\eqref{model*}.
By reversing all the inequalities in  \eqref{model_upper},  we have an analogous conclusion of Lemma \ref{compariprinciple}  for lower solutions.
\end{remark}

An obvious variation of the proof of Lemma \ref{compariprinciple} gives the following result.

\begin{lemma}\label{compariprinciple-speed}
In Lemma \ref{compariprinciple}, if we take $\bar g(t)\equiv{g}(t)$ for $t\in[0,T]$ and suppose
 $(\bar{u},\bar{v},g,\bar{h})$ satisfies
\begin{equation*}
\begin{cases}\displaystyle
\bar{u}_t\geq d_1\int_{g(t)}^{\bar{h}(t)}J_1(x-y)\bar{u}(y,t)dy-d_1\bar{u}-a_{11}\bar{u}&\\
\displaystyle
\hspace{2.7cm}+a_{12}\int_{g(t)}^{\bar{h}(t)}K(x-y)\bar{v}(y,t)dy,&0<t\leq T,~~x\in(g(t),\bar{h}(t)),\\
\displaystyle
\bar{v}_t\geq d_2\int_{g(t)}^{\bar{h}(t)}J_2(x-y)\bar{v}(y,t)dy-d_2\bar{v}-a_{22}\bar{v}+G(\bar{u}),&0<t\leq T,~~x\in(g(t),\bar{h}(t)),\\
\displaystyle
\displaystyle
\bar{h}'(t)\geq\mu  \int_{g(t)}^{\bar{h}(t)}\int^{\infty}_{\bar{h}(t)}J_1(x-y)\bar{u}(x,t)dydx&\\
\displaystyle
\hspace{1.2cm}+\mu\rho \int_{g(t)}^{\bar{h}(t)}\int^{\infty}_{\bar{h}(t)}J_2(x-y)\bar{v}(x,t)dydx,&0<t\leq T,\\
\bar{u}(x,t)\geq0,~~\bar{v}(x,t)\geq0,&0<t\leq T,~x=\bar{h}(t)\mbox{ or }g(t),\\
\bar{h}(0)\geq h_0,&\\
\bar{u}(x,0)\geq \bar{u}_0(x),~~~\bar{v}(\bar{x},0)\geq v_0(x),&x\in[-h_0,h_0],
\end{cases}
\end{equation*}
then the following holds:
\begin{equation*}
\begin{cases}
h(t)\leq \bar{h}(t)~~~\text{for}~~t\in [0,T],\\
u(x,t)\leq \bar{u}(x,t),~~v(x,t)\leq \bar{v}(x,t) ~~~\text{for} ~~x\in [g(t),h(t)],~~t\in(0,T].
\end{cases}
\end{equation*}
\end{lemma}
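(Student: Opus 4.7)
\medskip

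\noindent\textbf{Proof proposal for Lemma \ref{compariprinciple-speed}.}

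The plan is to mimic the perturbation argument of Lemma \ref{compariprinciple}, but only on the right free boundary. First, apply Lemma \ref{maximalprinciple} to the upper solution to obtain $\bar u,\bar v>0$ in $\Omega_T(g,\bar h)$, so that $\bar h$ is strictly increasing. Next, for small $\epsilon>0$, let $(u_\epsilon,v_\epsilon,g_\epsilon,h_\epsilon)$ be the unique solution of \eqref{model*} with $h_0$ replaced by $h_0^\epsilon:=h_0(1-\epsilon)$, $\mu$ replaced by $\mu_\epsilon:=\mu(1-\epsilon)$, and $(u_0,v_0)$ replaced by $(u_0^\epsilon,v_0^\epsilon)\in (C([-h_0^\epsilon,h_0^\epsilon]))^2$ satisfying $(0,0)\preceq (u_0^\epsilon,v_0^\epsilon)\llp (u_0,v_0)$ and converging to $(u_0,v_0)$ after rescaling, exactly as in the proof of Lemma \ref{compariprinciple}. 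A direct application of Lemma \ref{compariprinciple}, with $(u,v,g,h)$ serving as an upper solution of the perturbed problem (which is valid because $(u,v,g,h)$ solves the equations with equality, and the free-boundary conditions trivially upgrade to the required upper-solution inequalities under $\mu\ge\mu_\epsilon$), yields $g_\epsilon\ge g$ and $h_\epsilon\le h$ on $[0,T]$; in particular $g_\epsilon(t)\ge g(t)=\bar g(t)$ throughout.

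Next I would claim $h_\epsilon(t)<\bar h(t)$ for all $t\in[0,T]$. Since $h_\epsilon(0)=h_0^\epsilon<h_0\le\bar h(0)$, if this fails there is a first touching time $t^*\in(0,T]$ with $h_\epsilon(t^*)=\bar h(t^*)$ and $h_\epsilon(t)<\bar h(t)$ for $t\in[0,t^*)$, forcing $h_\epsilon'(t^*)\ge\bar h'(t^*)$. On $[0,t^*]$ the inclusion $[g_\epsilon(t),h_\epsilon(t)]\subseteq[\bar g(t),\bar h(t)]$ holds, so I can set $\omega:=\bar u-u_\epsilon$, $\nu:=\bar v-v_\epsilon$ and subtract the two equations on the smaller interval $[g_\epsilon(t),h_\epsilon(t)]$. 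Using $J_i,K\ge0$ and the identity $\int_{\bar g}^{\bar h}J_1\bar u\,dy=\int_{g_\epsilon}^{h_\epsilon}J_1\bar u\,dy+\int_{\bar g}^{g_\epsilon}J_1\bar u\,dy+\int_{h_\epsilon}^{\bar h}J_1\bar u\,dy$, and analogously for the $K$ and $J_2$ terms, together with the mean-value step $G(\bar u)-G(u_\epsilon)=G'(\bar\xi)\omega$ where $G'(\bar\xi)\ge 0$, one obtains a differential system for $(\omega,\nu)$ of precisely the form covered by Lemma \ref{maximalprinciple} on $\Omega_{t^*}(g_\epsilon,h_\epsilon)$. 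The lateral boundary values $\omega=\bar u\ge 0$, $\nu=\bar v\ge 0$ at $x\in\{g_\epsilon(t),h_\epsilon(t)\}$ and the strict positivity $\omega(\cdot,0),\nu(\cdot,0)>0$ on $(-h_0^\epsilon,h_0^\epsilon)$ allow Lemma \ref{maximalprinciple} to conclude $\omega,\nu>0$ on $\Omega_{t^*}(g_\epsilon,h_\epsilon)$.

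With $\bar u>u_\epsilon$ and $\bar v>v_\epsilon$ at $t^*$ in the interior of $[g_\epsilon(t^*),h_\epsilon(t^*)]$ in hand, I would then estimate
\[
\bar h'(t^*)\ge \mu\!\int_{\bar g(t^*)}^{\bar h(t^*)}\!\!\int_{\bar h(t^*)}^{\infty}\!\!J_1(x-y)\bar u(x,t^*)\,dy\,dx+\mu\rho\!\int_{\bar g(t^*)}^{\bar h(t^*)}\!\!\int_{\bar h(t^*)}^{\infty}\!\!J_2(x-y)\bar v(x,t^*)\,dy\,dx,
\]
shrink the outer integration range from $[\bar g(t^*),\bar h(t^*)]=[g(t^*),h_\epsilon(t^*)]$ down to $[g_\epsilon(t^*),h_\epsilon(t^*)]$ (legitimate since integrands are non-negative), and then replace $\bar u,\bar v$ by the strictly smaller $u_\epsilon,v_\epsilon$ and $\mu$ by the strictly smaller $\mu_\epsilon$, to obtain $\bar h'(t^*)>h_\epsilon'(t^*)$, contradicting $h_\epsilon'(t^*)\ge\bar h'(t^*)$. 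This proves $h_\epsilon<\bar h$ on $[0,T]$; together with $u_\epsilon<\bar u$, $v_\epsilon<\bar v$ on $\Omega_T(g_\epsilon,h_\epsilon)$. Finally, letting $\epsilon\to 0$ and invoking continuous dependence of the solution of \eqref{model*} on $(h_0,\mu,u_0,v_0)$ yields $h_\epsilon\to h$ uniformly and $(u_\epsilon,v_\epsilon)\to(u,v)$, giving the required $h\le\bar h$ on $[0,T]$ and $u\le\bar u$, $v\le\bar v$ on $\Omega_T(g,h)$. The main obstacle is verifying that the subtracted system really fits the hypotheses of Lemma \ref{maximalprinciple} on the smaller interval $[g_\epsilon,h_\epsilon]$; this is exactly where the non-negativity of $J_1,J_2,K$ and the monotonicity of $G$ are used in an essential way.
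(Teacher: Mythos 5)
Your proposal is correct and is, in substance, the ``obvious variation'' the paper alludes to. The paper gives no proof of Lemma~\ref{compariprinciple-speed}, stating only that it follows from an obvious modification of the proof of Lemma~\ref{compariprinciple}, and your argument is exactly such a modification: $\epsilon$-perturb $(h_0,\mu,u_0,v_0)$, show $h_\epsilon<\bar h$ by a first-touching-time contradiction at the right endpoint, then let $\epsilon\to 0$ via continuous dependence. The one non-mechanical step you supply, and rightly flag, is the preliminary invocation of Lemma~\ref{compariprinciple} with $(u,v,g,h)$ itself as an upper solution of the $\epsilon$-perturbed problem, which yields $g_\epsilon\ge g=\bar g$ a priori. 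This is genuinely needed: since $\bar g=g$ carries no upper-solution inequality for $\bar g'$ (it is the exact left front, driven by $u,v$ rather than $\bar u,\bar v$), one cannot simply re-run the two-sided touching argument of Lemma~\ref{compariprinciple} unchanged, and without $g_\epsilon\ge\bar g$ the domain inclusion $[g_\epsilon,h_\epsilon]\subset[\bar g,\bar h]$ used in the subtraction and in shrinking the outer integration range would not be available. Your verification that $\omega=\bar u-u_\epsilon$, $\nu=\bar v-v_\epsilon$ satisfy the hypotheses of Lemma~\ref{maximalprinciple} on $\Omega_{t^*}(g_\epsilon,h_\epsilon)$ (discarding nonnegative integral contributions over $[\bar g,g_\epsilon]\cup[h_\epsilon,\bar h]$ and using $G'\ge0$) and the subsequent strict inequality $\bar h'(t^*)>h_\epsilon'(t^*)$ are both sound.
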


\section{The corresponding fixed boundary problem}
In this section, we consider the associated fixed boundary problem of \eqref{model*}, namely the system with
\[
-g(t)=h(t)\equiv l>0,
\]
which is given by
\begin{equation}\label{model_fixbound}
\begin{cases}\displaystyle
u_t=d_1\int_{-l}^{l}J_1(x-y)u(y,t)dy-d_1u-a_{11}u&\\
\displaystyle
\hspace{2.7cm}+a_{12}\int_{-l}^{l}K(x-y)v(y,t)dy,&t>0,~~x\in(-l,l),\\
\displaystyle
v_t=d_2\int_{-l}^{l}J_2(x-y)v(y,t)dy-d_2v-a_{22}v+G(u),&t>0,~~x\in(-l,l),\\
u(x,0)=u_0(x),~~~v(x,0)=v_0(x),&x\in[-l,-l],
\end{cases}
\end{equation}
where
 \begin{equation*}
(0,0)\preceq (u_0,v_0)\in C([-l,l])^2.
\end{equation*}
It turns out that here rather different techniques are needed from previous works where the nonlocal term $a_{12}\int_{-l}^{l} K(x-y) u(y,t)dy$ in \eqref{model_fixbound} was replaced by $a_{12} u(x,t)$. Note that a good understanding of the behaviour of this fixed boundary problem is crucial for determining the long-time behaviour of  \eqref{model*}.

\subsection{The associated eigenvalue problem}

The long-time dynamical behaviour of \eqref{model_fixbound} is governed by its linearised eigenvalue problem at the trivial solution $(0,0)$, which has the form
\begin{equation}\label{eigenprobelm1}
\begin{cases}
\displaystyle
d_1\int_{-l}^{l}J_1(x-y)\theta_1(y)dy-d_1\theta_1-a_{11}\theta_1
\displaystyle
+a_{12}\int_{-l}^{l}K(x-y)\theta_2(y)dy=\lambda \theta_1,&x\in[-l,l],\\
\displaystyle
d_2\int_{-l}^{l}J_2(x-y)\theta_2(y)dy-d_2\theta_2-a_{22}\theta_2+G'(0)\theta_1=\lambda\theta_2,&x\in[-l,l].\\
\end{cases}
\end{equation}

Let $D:=C([-l,l])^2$ be equipped with norm
$\|\bm{\theta}\|_{D}=\sup_{x\in[-l,l]}|\bm{\theta}(x)|$, where
\begin{equation*}
\bm{\theta}=(\theta_1,\theta_2)\in D\mbox{ and }|\bm{\theta}(x)|=\sqrt{\theta_1^2(x)+\theta_2^2(x)}\mbox{ for
}x\in[-l,l].
\end{equation*}
Denote
\begin{equation*}\begin{cases}
D_+:=\{\bm{\theta}\in D\mid\theta_1(x),\theta_2(x)\geq 0,x\in[-l,l] \},\\
D_{+}^0:=\{\bm{\theta}\in D\mid\theta_1(x),\theta_2(x)> 0,x\in[-l,l] \}.
\end{cases}
\end{equation*}
We then define two operators $\mathcal{J}, \mathcal{T}: D\to D$ by
\begin{equation*}
\mathcal{J}(\theta_1,\theta_2)(x)= \left(
\displaystyle d_1\int_{-l}^{l}J_1(x-y)\theta_1(y)dy,
\displaystyle d_2\int_{-l}^{l}J_2(x-y)\theta_2(y)dy\right)
\end{equation*}
and
\begin{equation*}
\mathcal{T}(\theta_1,\theta_2)(x)=\left(\begin{matrix}
(d_1+a_{11})\theta_1(x)\displaystyle-a_{12}\int_{-l}^{l}K(x-y)\theta_2(y)dy\\
-G'(0)\theta_1(x)+(d_2+a_{22})\theta_2(x)
\end{matrix}\right)^T
\end{equation*}
for $\bm{\theta}=(\theta_1,\theta_2)\in D$. Then \eqref{eigenprobelm1} can be rewritten as
\begin{equation*}
(\mathcal{J}-\mathcal{T})\bm{\theta}=\lambda\bm{\theta}.
\end{equation*}

Throughout this section, we always assume that $J_1, J_2$ and $K$ satisfy {\bf (J)}, and $G$ satisfies (G1) and (G2).

\begin{lemma}\label{lemma_eigen1}
 The eigenvalue problem
\begin{equation}\label{eigenpro2}
\begin{cases}
\displaystyle
(d_1+a_{11})\theta_1(x)-a_{12}\int_{-l}^{l}K(x-y)\theta_2(y)dy=\lambda \theta_1(x),&x\in[-l,l],\\
\displaystyle
-G'(0)\theta_1(x)+(d_2+a_{22})\theta_2(x)=\lambda\theta_2(x),&x\in[-l,l]\\
\end{cases}
\end{equation}
admits a unique real eigenvalue denoted by $\lambda_1$ whose  corresponding
  eigenfunction pair $({\theta}_1,{\theta}_2)\in D_{+}^0$.
\end{lemma}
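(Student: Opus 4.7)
\medskip
\noindent\textbf{Proof plan.}
The strategy is to decouple the $2\times 2$ system \eqref{eigenpro2} by using the second equation to eliminate $\theta_1$, reducing the problem to a scalar eigenvalue problem for the convolution operator
\[
L_K\phi(x):=\int_{-l}^{l}K(x-y)\phi(y)\,dy,\qquad \phi\in C([-l,l]),
\]
to which the Krein--Rutman theorem applies. Write $A:=d_1+a_{11}$ and $B:=d_2+a_{22}$. If $\lambda\neq B$, the second equation yields
\[
\theta_1=\tfrac{B-\lambda}{G'(0)}\theta_2,
\]
and substituting into the first gives
\[
(A-\lambda)(B-\lambda)\,\theta_2(x)=a_{12}G'(0)\,L_K\theta_2(x),\qquad x\in[-l,l].
\]
Thus $\theta_2$ must be an eigenfunction of $L_K$ with eigenvalue $\sigma(\lambda):=(A-\lambda)(B-\lambda)/(a_{12}G'(0))$.

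Next, I would apply Krein--Rutman to $L_K$. Assumption \textbf{(J)} makes $L_K:C([-l,l])\to C([-l,l])$ a compact (continuous kernel on a compact interval) positive linear operator. Because $K$ is continuous and $K(0)>0$, there exists $\delta>0$ with $K>0$ on $(-\delta,\delta)$; iterating the convolution a finite number of times therefore spreads positivity over the whole interval, so some iterate $L_K^n$ is strongly positive. The Krein--Rutman theorem then gives a simple principal eigenvalue $\sigma^*>0$ (the spectral radius of $L_K$) with a strictly positive eigenfunction $\phi\in C([-l,l])$, and $\sigma^*$ is the \emph{only} eigenvalue of $L_K$ admitting a positive eigenfunction.

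Taking $\theta_2=\phi$ and $\sigma(\lambda)=\sigma^*$ gives the quadratic
\[
(A-\lambda)(B-\lambda)=a_{12}G'(0)\,\sigma^*,
\]
with real roots
\[
\lambda_{\pm}=\tfrac{1}{2}\Bigl[(A+B)\pm\sqrt{(A-B)^2+4a_{12}G'(0)\sigma^*}\Bigr].
\]
Since $\sigma^*>0$, one verifies $\lambda_-<\min(A,B)$ and $\lambda_+>\max(A,B)$. For the pair $(\theta_1,\theta_2)$ to lie in $D_+^0$ we need $B-\lambda>0$, which forces $\lambda=\lambda_-=:\lambda_1$, and then $\theta_1=\tfrac{B-\lambda_1}{G'(0)}\phi>0$ as required. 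For uniqueness, any real eigenvalue $\lambda$ with eigenfunction in $D_+^0$ must satisfy $\lambda\neq B$ (otherwise the second equation forces $\theta_1\equiv 0$); the reduction above then gives a positive eigenfunction of $L_K$, so Krein--Rutman forces $\sigma(\lambda)=\sigma^*$, and only the root $\lambda_-$ produces a positive $\theta_1$.

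The one delicate step is verifying the strong form of Krein--Rutman for $L_K$, i.e.\ uniqueness of the positive eigenfunction up to scalar multiple. This is where the full force of \textbf{(J)} (continuity and $K(0)>0$) is used, via the iteration argument above; once this is in hand, the rest of the proof is algebraic.
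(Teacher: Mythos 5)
Your proof is correct and follows essentially the same route as the paper: eliminate $\theta_1$ using the second equation, reduce to the scalar eigenvalue problem for the convolution operator $L_K$, invoke the existence of a unique (up to scaling) positive principal eigenfunction, solve the resulting quadratic in $\lambda$, and identify the smaller root as the one yielding a positive eigenfunction pair. The only cosmetic difference is that the paper cites Theorem~3.1 of Hutson et al.\ for the scalar principal eigenvalue, while you derive it directly by applying the Krein--Rutman theorem to a strongly positive iterate of $L_K$; this is a standard and adequate substitute.
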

\begin{proof}
We first consider the following eigenvalue problem:
\begin{equation}\label{eigenpro3}
\int_{-l}^{l}K(x-y)\theta_2(y)dy=\tilde{\lambda}\, \theta_2(x) \mbox{ for }x\in[-l,l].
\end{equation}
It follows from Theorem 3.1 in \cite{Hutson2003JMB} that \eqref{eigenpro3} admits a positive principal eigenvalue $\tilde{\lambda}$ given by
\begin{equation}\label{tilde-lambda}
\tilde{\lambda}=\sup_{\theta_*\in L^2([-l,l]),\|\theta_*\|\neq 0}\frac{\displaystyle\int_{-l}^l\int_{-l}^lK(x-y)\theta_*(x)\theta_*(y)dydx}{\displaystyle
\int_{-l}^l\theta_*^2(x)dx}\\
\end{equation}
and a unique (up to a scalar multiple) corresponding positive eigenfunction $\theta_*(x)$.
Note that problem \eqref{eigenpro2} can be rewritten as
\begin{equation*}
\int_{-l}^{l}K(x-y)\theta_2(y)dy=\frac{(d_1+a_{11}-\lambda)(d_2+a_{22}-\lambda)}{a_{12}G'(0)}\theta_2(y).
\end{equation*}
Let
\begin{equation}\label{tilda_lambda}
\frac{(d_1+a_{11}-\lambda)(d_2+a_{22}-\lambda)}{a_{12}G'(0)}=\tilde{\lambda}.
\end{equation}
Simple calculations show that the above identity holds when $\lambda$ takes the following values $\lambda_1$
and $\lambda_2$:
\begin{align*}
&\lambda_{1}=\frac{(d_1+a_{11}+d_2+a_{22})-\sqrt{(d_1+a_{11}-d_2-a_{22})^2+4a_{12}G'(0)\tilde{\lambda}}}{2},\\
&\lambda_{2}=\frac{(d_1+a_{11}+d_2+a_{22})+\sqrt{(d_1+a_{11}-d_2-a_{22})^2+4a_{12}G'(0)\tilde{\lambda}}}{2}.\\
\end{align*}
When we take $\tilde{\theta}:=(\tilde{\theta}_1,\tilde{\theta}_2)=(\delta_1\theta_*,\theta_*)$ with
\begin{equation*}
\delta_1:=\frac{d_2+a_{22}-\lambda_{1}}{G'(0)}>0,
\end{equation*}
it is easily checked that $(\delta_1\theta_*,\theta_*)$ is a positive solution of \eqref{eigenpro2} with $\lambda=\lambda_{1}$.
When $\lambda=\lambda_2$, $(\tilde{\theta}_1,\tilde{\theta}_2)=(\delta_2\theta_*,\theta_*)$ solve \eqref{eigenpro2}  with
\begin{equation*}
\delta_2:=\frac{d_2+a_{22}-\lambda_{2}}{G'(0)}<0.
\end{equation*}

Conversely, if $(\theta_1,\theta_2)\in D_+^0$ is a solution of \eqref{eigenpro2}, then
necessarily $\theta_2$ is a positive solution of \eqref{eigenpro3} with $\tilde{\lambda}$ defined in \eqref{tilda_lambda}
and thus \begin{equation*}\theta_1=\frac{d_2+a_{22}-\lambda}{G'(0)} \theta_2.\end{equation*}
It follows from the positivity of $(\theta_1,\theta_2)$ that $\lambda<d_2+a_{22}$ and hence $\lambda=\lambda_1$.
\end{proof}
\begin{lemma}\label{invert}
The following statements are valid:
\begin{itemize}
  \item [$(i)$] $(\mathcal{T}+\alpha \mathbf{I})^{-1}$ exists and is
  a bounded strongly positive operator for any $\alpha>-\lambda_{1}$;
   \item [$(ii)$]$\|(\mathcal{T}+\alpha \mathbf{I})^{-1}\|\rightarrow 0$ as $\alpha\rightarrow \infty$;
\item [$(iii)$] For any finite interval $[\alpha_1,\alpha_2]\subset (-\lambda_1,\infty)$,
$\sup_{\alpha\in[\alpha_1,\alpha_2]}\|(\mathcal{T}+\alpha \mathbf{I})^{-1}\|<\infty$.
   \end{itemize}
\end{lemma}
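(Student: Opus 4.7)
\medskip
\noindent\textbf{Proof plan for Lemma \ref{invert}.}
The plan is to decouple the $2\times 2$ algebraic-integral system defining $(\mathcal{T}+\alpha\mathbf{I})^{-1}$ into a scalar nonlocal equation, and then apply spectral theory of the compact positive operator
$\mathcal{K}\phi(x):=\int_{-l}^{l}K(x-y)\phi(y)\,dy$ on $C([-l,l])$. First I would note that from the explicit formula for $\lambda_{1}$ in the proof of Lemma \ref{lemma_eigen1}, one has $\lambda_{1}<\min\{d_{1}+a_{11},\,d_{2}+a_{22}\}$, so for any $\alpha>-\lambda_{1}$ both coefficients $d_{1}+a_{11}+\alpha$ and $d_{2}+a_{22}+\alpha$ are strictly positive. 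To solve $(\mathcal{T}+\alpha\mathbf{I})\boldsymbol{\theta}=\mathbf{f}$, I would use the second equation to eliminate $\theta_{2}$ via
$\theta_{2}=(G'(0)\theta_{1}+f_{2})/(d_{2}+a_{22}+\alpha)$, and substitute into the first equation to obtain the scalar equation
$(I-\beta(\alpha)^{-1}\mathcal{K})\theta_{1}=\tilde{f}$, where
$\beta(\alpha):=(d_{1}+a_{11}+\alpha)(d_{2}+a_{22}+\alpha)/(a_{12}G'(0))$ and $\tilde{f}$ is an explicit expression built from $f_{1}$ and $\mathcal{K}f_{2}$.

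The central arithmetic step, which I expect to be the main subtlety, is to show $\beta(\alpha)>\tilde{\lambda}$ for every $\alpha>-\lambda_{1}$. For this I would use the identity $\beta(-\lambda_{1})=\tilde{\lambda}$, which is precisely \eqref{tilda_lambda} rewritten in terms of $\alpha=-\lambda_{1}$, together with the observation that both factors defining $\beta$ are positive and strictly increasing on $[-\lambda_{1},\infty)$; hence $\beta$ is strictly increasing there. Consequently the spectral radius of $\beta(\alpha)^{-1}\mathcal{K}$ equals $\tilde{\lambda}/\beta(\alpha)<1$, because $\tilde{\lambda}$ is the principal (hence largest in modulus) eigenvalue of $\mathcal{K}$ by Theorem 3.1 of \cite{Hutson2003JMB}. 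The Neumann series
\begin{equation*}
(I-\beta(\alpha)^{-1}\mathcal{K})^{-1}=\sum_{n=0}^{\infty}\beta(\alpha)^{-n}\mathcal{K}^{n}
\end{equation*}
therefore converges in operator norm and gives a bounded inverse. Tracing back through the elimination yields a bounded $(\mathcal{T}+\alpha\mathbf{I})^{-1}$ on $D$, proving invertibility in (i).

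For strong positivity in (i), I would check two things. First, if $\mathbf{f}\in D_{+}\setminus\{0\}$ then $\tilde f\ge 0$ and $\tilde f\not\equiv 0$ (since either $f_{1}\not\equiv 0$ feeds directly into $\tilde f$, or $f_{2}\not\equiv 0$ and $\mathcal{K}f_{2}\not\equiv 0$ by positivity of $K$ near $0$). Second, using continuity of $\tilde f$, fix a neighbourhood $U_{0}$ on which $\tilde f>0$; since $K(0)>0$ and $K$ is continuous, there is $\delta>0$ with $K>0$ on $(-\delta,\delta)$, so $\mathcal{K}\tilde f$ is strictly positive on a $\delta$-enlargement $U_{1}\supset U_{0}$, and inductively $\mathcal{K}^{n}\tilde f>0$ on $U_{n}$ with $U_{n}\uparrow[-l,l]$ after finitely many steps. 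Summing the non-negative Neumann series, $(I-\beta(\alpha)^{-1}\mathcal{K})^{-1}\tilde f>0$ on $[-l,l]$, so $\theta_{1}\in D_{+}^{0}$ componentwise; then $\theta_{2}=(G'(0)\theta_{1}+f_{2})/(d_{2}+a_{22}+\alpha)>0$ as well, giving $(\mathcal{T}+\alpha\mathbf{I})^{-1}\mathbf{f}\in D_{+}^{0}$.

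Parts (ii) and (iii) follow almost mechanically from the Neumann representation. For (ii), the operator-norm bound gives
\begin{equation*}
\|(I-\beta(\alpha)^{-1}\mathcal{K})^{-1}\|\le (1-\tilde{\lambda}/\beta(\alpha))^{-1}\to 1\quad\text{as }\alpha\to\infty,
\end{equation*}
while the pre-factors $1/(d_{1}+a_{11}+\alpha)$ and $1/(d_{2}+a_{22}+\alpha)$ coming from the elimination each tend to $0$, forcing $\|(\mathcal{T}+\alpha\mathbf{I})^{-1}\|\to 0$. For (iii), the map $\alpha\mapsto \beta(\alpha)^{-1}$ is continuous on $[\alpha_{1},\alpha_{2}]\subset(-\lambda_{1},\infty)$ and $\beta(\alpha)>\tilde{\lambda}$ is bounded away from $\tilde{\lambda}$ by a positive margin on this compact set, so $\alpha\mapsto(I-\beta(\alpha)^{-1}\mathcal{K})^{-1}$ is operator-norm continuous; combined with boundedness of the pre-factors away from zero and infinity, this yields $\sup_{\alpha\in[\alpha_{1},\alpha_{2}]}\|(\mathcal{T}+\alpha\mathbf{I})^{-1}\|<\infty$.
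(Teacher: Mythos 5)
Your proof is correct in substance but follows a genuinely different route from the paper, and the comparison is worth recording. The paper proves injectivity of $\mathcal{T}+\alpha\mathbf{I}$ by reducing to the scalar equation $\int_{-l}^{l}K(x-y)\theta_2(y)\,dy=\alpha_1\theta_2(x)$ with $\alpha_1>\tilde\lambda$ and invoking the argument from the Touching Lemma of \cite{FangLi2017DCDS}, proves surjectivity via the Fredholm alternative (using equicontinuity and Arzel\`a--Ascoli to get compactness of $\mathcal{K}$), proves strong positivity by the Touching Lemma again, and then establishes (ii) and (iii) by contradiction arguments extracting convergent subsequences. You instead eliminate $\theta_2$, rewrite $(\mathcal{T}+\alpha\mathbf{I})\boldsymbol{\theta}=\mathbf{f}$ as $(I-\beta(\alpha)^{-1}\mathcal{K})\theta_1=\tilde f$, and build the inverse from the Neumann series. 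The crucial arithmetic ($\lambda_1<\min\{d_1+a_{11},d_2+a_{22}\}$, $\beta(-\lambda_1)=\tilde\lambda$, $\beta$ strictly increasing on $[-\lambda_1,\infty)$) all checks out, and identifying the spectral radius of $\mathcal{K}$ on $C([-l,l])$ with $\tilde\lambda$ is legitimate: $\mathcal{K}$ is compact and symmetric, its spectrum agrees with the $L^2$ spectrum, and the Krein--Rutman eigenvalue equals $\tilde\lambda$. Your route has the advantage of being constructive and of delivering (i), (ii), (iii) in one stroke with explicit estimates, whereas the paper's Touching Lemma argument is more flexible (it does not rely on symmetry of $K$ nor on the exact algebraic structure that allows elimination).

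One small imprecision: the displayed bound in your treatment of (ii), $\|(I-\beta(\alpha)^{-1}\mathcal{K})^{-1}\|\le(1-\tilde\lambda/\beta(\alpha))^{-1}$, is not correct as written, since the geometric Neumann estimate requires the \emph{operator norm} $\|\mathcal{K}\|_{C\to C}/\beta(\alpha)<1$, not the spectral radius $\tilde\lambda/\beta(\alpha)<1$; in general $\|\mathcal{K}\|_{C\to C}=\max_x\int_{-l}^l K(x-y)\,dy$ can exceed $\tilde\lambda$. This is harmless for the conclusion because as $\alpha\to\infty$ both quantities tend to zero and the bound $(1-\|\mathcal{K}\|/\beta(\alpha))^{-1}\to1$ works equally well, and for (iii) one should instead appeal to continuity of $\alpha\mapsto(I-\beta(\alpha)^{-1}\mathcal{K})^{-1}$ on the compact set $[\alpha_1,\alpha_2]$ (which follows since the set of invertible operators is open and inversion is continuous), rather than to a uniform geometric bound. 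With that correction, your argument is complete.
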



\begin{proof}
$(i)$ We first prove the existence of $(\mathcal{T}+\alpha\mathbf{I} )^{-1}$ by showing that
$\mathcal{T}+\alpha\mathbf{I} :  D \rightarrow  D$
is one-to-one and onto.
Suppose that there exists $(\theta_1,\theta_2)\in D$ such that
\begin{equation*}\label{T+alpha}
(\mathcal{T}+\alpha\mathbf{I} )(\theta_1,\theta_2)=0 ,
\end{equation*}
which is equivalent to
\begin{equation}\label{K}
\int_{-l}^{l}K(x-y)\theta_2(y)dy=\alpha_1\theta_2(x)
\end{equation}
with
\begin{equation*}
\alpha_1:=\frac{(d_1+a_{11}+\alpha)(d_2+a_{22}+\alpha)}{a_{12}G'(0)}.
\end{equation*}
From $\alpha>-\lambda_{1}$ we obtain  $\alpha_1> \tilde{\lambda}$, where $\tilde\lambda$ is given by \eqref{tilde-lambda}.
We next show that $(\theta_1,\theta_2)=(0,0)$ by adopting the proof
of the
Touching Lemma in \cite{FangLi2017DCDS}. Otherwise, necessarily
$\theta_2\not\equiv 0$; without loss of generality, we assume that $\theta_2(x)<0$ for some $x\in[-l,l]$.
Note that
\begin{equation}\label{touchlemma}
\int_{-l}^{l}K(x-y)\theta_*(y)dy=\tilde{\lambda}\,\theta_*(x)<\alpha_1\theta_*(x)
\end{equation}
with $\theta_*(x)>0$ in $[-l,l]$ by \eqref{eigenpro3}.
Then we can find some constant $c_0>0$ such that $\theta_*+c_0\theta_2$ is
less than $0$ somewhere in $[-l,l]$ if $c_0>0$ is large, and strictly
positive in $[-l,l]$ if $c_0$ is small.
Therefore, there exists $c_0^*\in(0,\infty)$ such that
 $\theta_*(x)+c_0^*\theta_2(x)\geq0$ for $x\in[-l,l]$ but $\theta_*(x_0)+c_0^*\theta_2(x_0)=0$ for some $x_0\in[-l,l]$.
If $\theta_*(x)+c_0^*\theta_2(x)\equiv0$ for $x\in[-l,l]$, then \eqref{K} implies $\alpha_1=\tilde\lambda$, which is a contradiction. Therefore, we can find
$\tilde x_0\in \partial\{x\in[-l,l]|\theta_*(x)+c_0^*\theta_2(x)>0\}$, which implies
\begin{equation*}
0<\int_{-l}^{l}K(\tilde x_0-y)\theta_*(y)dy+\int_{-l}^{l}K(\tilde x_0-y)c_0^*\theta_2(y)dy
<\alpha_1\theta_*(\tilde x_0)+\alpha_1c_0^*\theta_2(\tilde x_0)=0.
\end{equation*}
This contradiction implies that $\mathcal{T}+\alpha\mathbf{I}$ is one-to-one.

Next, we show that $\mathcal{T}+\alpha\mathbf{I} $ is onto, namely, for any given $(\xi_1,\xi_2)\in D$, there exists
$(\theta_1,\theta_2)\in D$ such that
\begin{equation}\label{ontoequa}
(\mathcal{T}+\alpha\mathbf{I})(\theta_1,\theta_2)=(\xi_1,\xi_2).
\end{equation}
Clearly, \eqref{ontoequa} is equivalent to
\begin{equation*}
\frac{(d_1+a_{11}+\alpha)(d_2+a_{22}+\alpha)}{a_{12}G'(0)}\theta_2-
\frac{(d_1+a_{11}+\alpha)}{a_{12}G'(0)}\xi_2-\int_{-l}^{l}K(x-y)\theta_2(y)dy=\frac{\xi_1}{a_{12}},
\end{equation*}
which can be rewritten as
\begin{equation}\label{ontoequa1}
\int_{-l}^{l}K(x-y)\theta_2(y)dy-\frac{(d_1+a_{1}+\alpha)(d_2+a_{22}+\alpha)}{a_{12}G'(0)}\theta_2=F(\xi_1,\xi_2),
\end{equation}
where \begin{equation*}F(\xi_1,\xi_2):=-\frac{\xi_1}{a_{12}}+\frac{(d_1+a_{11}+\alpha)}{a_{12}G'(0)}\xi_2.\end{equation*}
By the continuity  of $K$, for any $\varepsilon>0$, there exists $\delta>0$ such that
\begin{equation*}
|K(x_1)-K(x_2)|\leq \frac{\varepsilon}{2l}
\end{equation*}
for $x_1,x_2\in[-l,l]$ satisfying $|x_1-x_2|<\delta$ .
Then for every bounded sequence $\{\theta_{2n}\}\subset C([-l,l])$,
\begin{align*}
\bigg|\int^{l}_{-l}K(x_1-y)\theta_{2n}(y)dy-\int^{l}_{-l}K(x_2-y)\theta_{2n}(y)dy\bigg|
\leq \varepsilon \|\theta_{2n}\|,
\end{align*}
 which implies
$ \displaystyle\int_{-l}^{l}K(x-y)\theta_{2n}(y)dy$ is equicontinuous and thus
$\displaystyle\Big\{\int_{-l}^{l}K(x-y)\theta_{2n}(y)dy\Big\}$ is
precompact in $C([-l,l])$ by the Arzel\'{a}-Ascoli theorem.
Then by the Fredholm alternative (see e.g. Theorem 6.6 in \cite{Brezis2011}), \eqref{ontoequa1}
admits a solution $\theta_2\in C([-l,l])$ since \eqref{K} only has a trivial solution. Thus
$\mathcal{T}+\alpha\mathbf{I}$ is onto for any $\alpha>-\lambda_1$.
Hence, $(\mathcal{T}+\alpha \mathbf{I})^{-1}$ exists for any $\alpha>-\lambda_{1}$.
Moreover, by the Bounded Inverse Theorem, $(\mathcal{T}+\alpha\mathbf{I} )^{-1}$ is a bounded linear
operator for any $\alpha>-\lambda_{1}$.

Finally, we show that $(\mathcal{T}+\alpha \mathbf{I})^{-1}$ is a strongly positive operator for any $\alpha>-\lambda_{1}$.
It suffices to prove that,
for $(\bar{\xi}_1,\bar{\xi}_2)\in D$, if $(\mathcal{T}+\alpha \mathbf{I})(\bar{\xi}_1,\bar{\xi}_2)\in D_+\setminus \{\mathbf{0}\}$, then
$(\bar{\xi}_1,\bar{\xi}_2)\in D_{+}^0$.
Indeed, from
 \begin{equation}\label{strong+}
\begin{cases}
\displaystyle
(d_1+a_{11}+\alpha)\bar{\xi}_1(x)-a_{12}\int_{-l}^{l}K(x-y)\bar{\xi}_2(y)dy\geq0,&x\in[-l,l],\\
\displaystyle
-G'(0)\bar{\xi}_1(x)+(d_2+a_{22}+\alpha)\bar{\xi}_2(x)\geq 0,&x\in[-l,l],\\
\end{cases}
\end{equation}
it is easy to calculate that
\begin{equation*}
\int_{-l}^{l}K(x-y)\bar{\xi}_2(y)dy\leq \alpha_1\bar{\xi}_2(x).
\end{equation*}
We first prove that $\bar{\xi}_2\in C([-l,l])$ is nonnegative. If not, we assume $\bar{\xi}_2(x)<0$ somewhere
in $[-l,l]$. Note that \eqref{touchlemma} holds for $\theta_*(x)>0$ in $[-l,l]$. By applying the proof of the Touching Lemma again, we see that
\begin{equation*}
\int_{-l}^{l}K(x-y)\bar{\xi}_2(y)dy-\alpha_1\bar{\xi}_2(x)=\int_{-l}^{l}K(x-y)\theta_*(y)dy -\alpha_1\theta_*(x)\equiv0,
\end{equation*}
which is a contradiction since $\alpha_1>\tilde{\lambda}$. Next we prove that $\bar{\xi}_2(x)\not\equiv0$ in $[-l,l]$.
Otherwise, $\bar{\xi}_2(x)\equiv0$ in $[-l,l]$. It follows from the first inequality of \eqref{strong+} that
$\bar{\xi}_1(x)\geq0$ in $[-l,l]$, while $\bar{\xi}_1(x)\leq0$ holds in $[-l,l]$ by the second inequality. Thus
$\bar{\xi}_1(x)\equiv0$ in $[-l,l]$. It is a contradiction since then
$(\mathcal{T}+\alpha\mathbf{I})(\bar{\xi}_1,\bar{\xi}_2)\equiv(0,0)$.
Finally we show that $\bar{\xi}_2(x)$ is strictly positive in $[-l,l]$; otherwise, there exists some
point $x_0\in\partial\{x\in[-l,l]\mid\bar{\xi}_2(x)>0\}$ such that
$\bar{\xi}_2(x_0)=0$. Therefore, we see that
\begin{equation*}
0<\int_{-l}^{l}K(x_0-y)\bar{\xi}_2(y)dy\leq\alpha_1\bar{\xi}_2(x_0)=0,
\end{equation*}
which is impossible. Therefore, $\bar{\xi}_2(x)>0$ in $[-l,l]$
and it follows from the first inequality in \eqref{strong+} that $\bar{\xi}_1(x)>0$ holds in $[-l,l]$.

$(ii)$ Suppose for contradiction there exists a sequence $\alpha_n>-\lambda_1$ such that
$\|(\mathcal{T}+\alpha_n\mathbf{I})^{-1}\|\geq 2\tilde{c}>0$
and $\alpha_n\rightarrow \infty$. Therefore, $\|(\mathcal{T}+\alpha_n\mathbf{I})^{-1}{\Psi_n}\| \geq\tilde{c}$ for some $\Psi_n\in D$ with $\|\Psi_n\|=1$.
Let $\Phi_n:=(\mathcal{T}+\alpha_n \mathbf{I})^{-1}\Psi_n$;
then $\|\Phi_n\|\geq\tilde{c}$ and
\begin{equation*}
\frac{\Psi_n}{\|\Phi_n\|}=(\mathcal{T}+\alpha_n \mathbf{I})\frac{\Phi_n}{\|\Phi_n\|},
\ \
\|(\mathcal{T}+\alpha_n \mathbf{I})\Theta_{n}\|\leq 1/\tilde{c},
\end{equation*}
where $\Theta_n=\frac{\Phi_n}{\|\Phi_n\|}\in D$ and clearly
$\|\Theta_{n}\|=1$ for all $n\geq 1$.
Then
\begin{equation*}
\alpha_n\|\Theta_{n}\|=\|(\mathcal{T}+\alpha_n \mathbf{I})\Theta_{n} -\mathcal{T}\Theta_{n}\|\leq 1/\tilde c + \|\mathcal{T}\|\|\Theta_{n}\|
\end{equation*}
and thus
\begin{equation*}
1=\|\Theta_{n}\|\leq \frac{1}{\tilde{c}(\alpha_n-\|T\|)} \to 0
\end{equation*}
as $n\rightarrow \infty$. This  contradiction proves the desired conclusion.

$(iii)$ By way of contradiction, we assume that there exists a sequence
$\tilde{\alpha}_n\rightarrow \tilde{\alpha}\in[\alpha_1,\alpha_2]\subset (-\lambda_1, \infty)$
such that $\|(\mathcal{T}+\tilde{\alpha}_n \mathbf{I})^{-1}\|\rightarrow\infty$ as $n\rightarrow \infty$.
Then we can find $\Psi_n\in D$ such that $\|\Psi_n\|=1$ and
$\|(\mathcal{T}+\tilde{\alpha}_n \mathbf{I})^{-1}\Psi_n\|\rightarrow\infty$ as $n\rightarrow \infty$.
Letting $\tilde{\Phi}_n:=(\mathcal{T}+\tilde{\alpha}_n \mathbf{I})^{-1}\Psi_n$, then
$\lim_{n\rightarrow \infty}\|\tilde{\Phi}_n\|=\infty$.
Similar to the proof of $(ii)$, we have
\begin{equation}\label{tilda_alpha}
\lim_{n\rightarrow \infty}(\mathcal{T}+\tilde{\alpha}_n \mathbf{I})\tilde{\Theta}_{n}=\bm{0},
\end{equation}
with $\tilde{\Theta}_{n}=(\tilde{\theta}_{1n},\tilde{\theta}_{2n}):=\frac{\tilde{\Phi}_n}{\|\tilde{\Phi}_n\|}\in D$ and
$\|\tilde{\Theta}_{n}\|=1$ for all $n\in \mathbb{N}^+$.

Since $\displaystyle\Big\{\int_{-l}^{l}K(x-y)\tilde{\theta}_{2n}(y)dy\Big\}$ is
precompact in $C([-l,l])$, there
exists a subsequence of $\{\tilde{\theta}_{2n}\}$,  still denoted by $\{\tilde{\theta}_{2n}\}$
for simplicity, such that
\begin{equation*}\lim_{n\rightarrow \infty}\int_{-l}^{l}K(x-y)\tilde{\theta}_{2n}(y)dy \mbox{ exists in $C([-l,l])$}.\end{equation*}
Therefore, by the first equation in \eqref{tilda_alpha}, along the same subsequence,
$\lim_{n\rightarrow \infty}\{\tilde{\theta}_{1n}\}$ exists in $C([-l,l])$. It then
follows from the second equation  in \eqref{tilda_alpha} that $\lim_{n\rightarrow\infty}\tilde{\theta}_{2n}$ exists in $C([-l,l])$.
Therefore,
\begin{equation}\label{bound_T}
\int_{-l}^{l}K(x-y)\lim_{n\rightarrow\infty}\tilde{\theta}_{2n}(y)dy=
\frac{(d_1+a_{11}+\tilde{\alpha})(d_2+a_{22}+\tilde{\alpha})}{a_{12}G'(0)}\lim_{n\rightarrow\infty}\tilde{\theta}_{2n}(x).
\end{equation}
Since $\tilde{\alpha}>-\lambda_1$, by the proof of $(i)$,
\eqref{bound_T} holds only when $\lim_{n\rightarrow\infty}\tilde{\theta}_{2n}=0$ in $C([-l,l])$, which yields
that $\lim_{n\rightarrow\infty}\tilde\theta_{1n}=0$ in $C([-l,l])$.
Hence,
\[\lim_{n\rightarrow\infty}\|\tilde{\Theta}_{n}\|=
\lim_{n\rightarrow\infty}\max_{x\in[-l,l]}\sqrt{\tilde{\theta}_{1n}^2(x)+\tilde{\theta}_{2n}^2(x)}=0,\]
which is a contradiction.
\end{proof}

\begin{lemma}\label{compactness}  For each $\alpha>-\lambda_{1}$,
$\mathcal{J}(\mathcal{T}+\alpha\mathbf{I} )^{-1}$ and  $ (\mathcal{T}+\alpha\mathbf{I} )^{-1}\mathcal J$ are both compact and strongly positive operators.
\end{lemma}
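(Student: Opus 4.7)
The plan is to split the statement into the two separate properties—compactness and strong positivity—and reduce each of them to Lemma \ref{invert} together with an elementary analysis of $\mathcal{J}$.

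For compactness, I would first verify that $\mathcal{J}:D\to D$ is itself a compact operator. Since each $J_i$ is continuous on $\mathbb{R}$ and therefore uniformly continuous on $[-2l,2l]$, the same Arzel\`a--Ascoli argument already carried out in the proof of Lemma \ref{invert}(i) (there applied to the kernel $K$) shows that for every bounded sequence $\{\bm{\theta}_n\}\subset D$ the image $\{\mathcal{J}\bm{\theta}_n\}$ is uniformly bounded and equicontinuous on $[-l,l]$, hence precompact in $D$. Composing the compact operator $\mathcal{J}$ with the bounded operator $(\mathcal{T}+\alpha\mathbf{I})^{-1}$ on either side (boundedness comes from Lemma \ref{invert}(i)) immediately yields compactness of both $\mathcal{J}(\mathcal{T}+\alpha\mathbf{I})^{-1}$ and $(\mathcal{T}+\alpha\mathbf{I})^{-1}\mathcal{J}$.

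For strong positivity, the key lemma to establish is that $\mathcal{J}$ maps $D_+^0$ into $D_+^0$. If $(\phi_1,\phi_2)\in D_+^0$, then by continuity on the compact interval $[-l,l]$ there exists $m>0$ with $\phi_i(y)\geq m$ for all $y\in[-l,l]$ and $i=1,2$, so that for every $x\in[-l,l]$
\begin{equation*}
d_1\int_{-l}^{l}J_1(x-y)\phi_1(y)\,dy\;\geq\;m\,d_1\int_{x-l}^{x+l}J_1(z)\,dz,
\end{equation*}
and the right-hand side is strictly positive because $0\in[x-l,x+l]$ while $J_1(0)>0$ and $J_1$ is continuous, making $J_1$ positive on a neighbourhood of $0$. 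The same estimate with $J_2$ in place of $J_1$ handles the second component, so $\mathcal{J}(\phi_1,\phi_2)\in D_+^0$. Now take any $\bm{\theta}\in D_+\setminus\{\mathbf{0}\}$. By Lemma \ref{invert}(i) we have $(\mathcal{T}+\alpha\mathbf{I})^{-1}\bm{\theta}\in D_+^0$, and then $\mathcal{J}(\mathcal{T}+\alpha\mathbf{I})^{-1}\bm{\theta}\in D_+^0$ by what was just shown. For the reverse composition, I would only need $\mathcal{J}$ to send $D_+\setminus\{\mathbf{0}\}$ into $D_+\setminus\{\mathbf{0}\}$ and then appeal to the strong positivity of $(\mathcal{T}+\alpha\mathbf{I})^{-1}$: if, say, $\theta_1\not\equiv 0$ and $\theta_1\geq 0$, then $\theta_1$ is strictly positive on a subinterval of $[-l,l]$ by continuity, and this subinterval intersects the set $\{y:J_1(x_0-y)>0\}$ for some $x_0\in[-l,l]$, so the first component of $\mathcal{J}\bm{\theta}$ is positive at $x_0$. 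Hence $\mathcal{J}\bm{\theta}\in D_+\setminus\{\mathbf{0}\}$ and Lemma \ref{invert}(i) completes the proof.

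The only delicate point is verifying that the convolution $\int_{-l}^{l}J_i(x-y)\phi(y)\,dy$ is strictly positive at \emph{every} $x\in[-l,l]$ when $\phi\in C([-l,l])$ is strictly positive; the bound on $[x-l,x+l]\ni 0$ together with $J_i(0)>0$ handles this uniformly and is the real content of the proof, everything else being a routine packaging of Lemma \ref{invert}.
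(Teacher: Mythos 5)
Your proof is correct and follows essentially the same route as the paper's: establish compactness of $\mathcal{J}$ via the Arzelà--Ascoli theorem, then compose with the bounded, strongly positive operator $(\mathcal{T}+\alpha\mathbf{I})^{-1}$ from Lemma \ref{invert}. The paper simply asserts the positivity-preserving properties of $\mathcal{J}$ (that it sends $D_+\setminus\{\mathbf{0}\}$ to $D_+\setminus\{\mathbf{0}\}$ and $D_+^0$ to $D_+^0$) without spelling out the use of $J_i(0)>0$ and continuity, whereas you supply those details; otherwise the two arguments coincide.
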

\begin{proof}
It follows from Lemma \ref{invert} that $(\mathcal{T}+\alpha \mathbf{I} )^{-1}$ is a bounded strongly positive linear operator for any $\alpha>-\lambda_{1}$.
We now prove that $\mathcal{J}$ is compact. Let
$\{\mathbf{{u_n}}\}\subset D$ be a bounded sequence.
Define
$
\mathbf{v_n}=\mathcal{J}\mathbf{u_n}$.
It  follows from assumption $\textbf{(J)}$ that $\{\mathbf{v_n}\}$ is bounded and equicontinuous. By the Arzel\`{a}-Ascoli theorem,
$\{\mathbf{v_n}\}$ is precompact.
Hence, $\mathcal{J}$ is a compact operator.

Moreover, it is also easily seen that if $(\theta_1,\theta_2)\in D_+\setminus\{\bf 0\}$, then $\mathcal J(\theta_1,\theta_2)\in D_+\setminus\{\bf 0\}$,
and if $(\theta_1,\theta_2)\in D_+^0$, then $\mathcal J(\theta_1,\theta_2)\in D_+^0$. Thus $\mathcal{J}(\mathcal{T}+\alpha\mathbf{I} )^{-1}$ and  $ (\mathcal{T}+\alpha\mathbf{I} )^{-1}\mathcal J$ are both compact and strongly positive for any $\alpha>-\lambda_{1}$.
 This finishes the proof.
\end{proof}

\begin{lemma}\label{spectral_J}
For $\alpha>-\lambda_1$, denote by $r(\alpha):=r(\mathcal{J}(\mathcal{T}+\alpha\mathbf{I} )^{-1})$ the spectral
radius of $\mathcal{J}(\mathcal{T}+\alpha\mathbf{I}
)^{-1}$. Then the following statements are valid:
\begin{itemize}
\item[$(i)$] $r((\mathcal{T}+\alpha\mathbf{I} )^{-1}\mathcal J)=r(\alpha)$,
  \item [$(ii)$] $\lim_{\alpha\searrow -\lambda_1}r(\alpha)=\infty$,
   \item [$(iii)$]$\lim_{\alpha\rightarrow \infty}r(\alpha)=0$,
   \item [$(iv)$] $r(\alpha)$ is continuous and strictly decreasing for $\alpha>-\lambda_{1}$.

   \end{itemize}
\end{lemma}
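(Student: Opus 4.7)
The plan is to treat the four statements in turn, drawing on Lemmas \ref{lemma_eigen1}--\ref{compactness}. For $(i)$, I would appeal to the classical identity $\sigma(AB)\setminus\{0\}=\sigma(BA)\setminus\{0\}$ for bounded $A,B$ on a Banach space, which gives $r(AB)=r(BA)$ as soon as this common value is nonzero; Lemma \ref{compactness} together with Krein--Rutman supplies positive principal eigenfunctions for both $\mathcal{J}(\mathcal{T}+\alpha\mathbf{I})^{-1}$ and $(\mathcal{T}+\alpha\mathbf{I})^{-1}\mathcal{J}$, so both spectral radii are strictly positive and the equality follows. A more self-contained alternative is to note that if $\phi\in D_+^0$ realises $r(\alpha)$ for the first operator then $\psi:=(\mathcal{T}+\alpha\mathbf{I})^{-1}\phi\in D_+^0$ realises it for the second. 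Claim $(iii)$ is immediate from
\[
r(\alpha)\le\|\mathcal{J}(\mathcal{T}+\alpha\mathbf{I})^{-1}\|\le\|\mathcal{J}\|\,\|(\mathcal{T}+\alpha\mathbf{I})^{-1}\|
\]
combined with Lemma \ref{invert}$(ii)$.

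For $(ii)$, the starting point is Lemma \ref{lemma_eigen1}, which provides $\tilde{\bm\theta}\in D_+^0$ with $\mathcal{T}\tilde{\bm\theta}=\lambda_1\tilde{\bm\theta}$, and hence $(\mathcal{T}+\alpha\mathbf{I})^{-1}\tilde{\bm\theta}=(\alpha+\lambda_1)^{-1}\tilde{\bm\theta}$ for every $\alpha>-\lambda_1$. Since $\tilde{\bm\theta}$ is strictly positive and continuous on the compact interval $[-l,l]$ and $J_i(0)>0$, the image $\mathcal{J}\tilde{\bm\theta}$ lies in $D_+^0$, so compactness gives a constant $c>0$ with $\mathcal{J}\tilde{\bm\theta}\succeq c\tilde{\bm\theta}$. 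Therefore
\[
\mathcal{J}(\mathcal{T}+\alpha\mathbf{I})^{-1}\tilde{\bm\theta}\succeq\frac{c}{\alpha+\lambda_1}\tilde{\bm\theta},
\]
and the Krein--Rutman sub-eigenvector bound for the compact strongly positive operator $\mathcal{J}(\mathcal{T}+\alpha\mathbf{I})^{-1}$ forces $r(\alpha)\ge c/(\alpha+\lambda_1)\to\infty$ as $\alpha\searrow -\lambda_1$.

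For $(iv)$, my workhorse would be the resolvent identity
\[
(\mathcal{T}+\alpha_1\mathbf{I})^{-1}-(\mathcal{T}+\alpha_2\mathbf{I})^{-1}=(\alpha_2-\alpha_1)(\mathcal{T}+\alpha_1\mathbf{I})^{-1}(\mathcal{T}+\alpha_2\mathbf{I})^{-1}.
\]
Paired with the local norm bound in Lemma \ref{invert}$(iii)$, this shows that $\alpha\mapsto(\mathcal{T}+\alpha\mathbf{I})^{-1}$, and hence $\alpha\mapsto\mathcal{J}(\mathcal{T}+\alpha\mathbf{I})^{-1}$, is norm-continuous on $(-\lambda_1,\infty)$; since $r(\alpha)$ is a simple isolated eigenvalue of a compact operator (Krein--Rutman), it varies continuously under such perturbations. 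For strict monotonicity, take $-\lambda_1<\alpha_1<\alpha_2$ and let $\phi_{\alpha_2}\in D_+^0$ be a principal eigenfunction of $\mathcal{J}(\mathcal{T}+\alpha_2\mathbf{I})^{-1}$. Strong positivity of the two resolvents, via the resolvent identity, gives $(\mathcal{T}+\alpha_1\mathbf{I})^{-1}\phi_{\alpha_2}\ggs(\mathcal{T}+\alpha_2\mathbf{I})^{-1}\phi_{\alpha_2}$, and applying the positive operator $\mathcal{J}$ yields $\mathcal{J}(\mathcal{T}+\alpha_1\mathbf{I})^{-1}\phi_{\alpha_2}\ggs r(\alpha_2)\phi_{\alpha_2}$. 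Pairing this strict inequality with a positive principal eigenfunction of the adjoint $(\mathcal{J}(\mathcal{T}+\alpha_1\mathbf{I})^{-1})^{*}$ associated with $r(\alpha_1)$, supplied by Krein--Rutman on the dual cone, excludes $r(\alpha_1)\le r(\alpha_2)$ and forces $r(\alpha_1)>r(\alpha_2)$. The main obstacle is exactly this last step: the pointwise strict inequality alone would yield only $r(\alpha_1)\ge r(\alpha_2)$, and upgrading to strict inequality requires either the dual eigenfunction trick above or an equivalent Collatz--Wielandt type iteration.
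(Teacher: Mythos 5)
Your proposal is correct and follows essentially the same route as the paper: for all four claims it hinges, as the paper does, on the $\mathcal{T}$-eigenfunction $\tilde{\bm\theta}$ supplied by Lemma~\ref{lemma_eigen1}, the resolvent bounds in Lemma~\ref{invert}, the compactness and strong positivity in Lemma~\ref{compactness}, and the Krein--Rutman theorem.

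The only departures are cosmetic. In $(ii)$ the paper unwinds the lower bound $\mathcal{J}(\mathcal{T}+\alpha\mathbf{I})^{-1}\tilde{\bm\theta}\succeq\gamma\tilde{\bm\theta}$ by explicit power iteration, $\|(\mathcal{J}(\mathcal{T}+\alpha\mathbf{I})^{-1})^n\|\geq\gamma^n$, to get $r(\alpha)\geq\gamma$; you instead cite the Krein--Rutman/Collatz--Wielandt sub-eigenvector bound, which encapsulates exactly that iteration. In $(iv)$, after obtaining the strict componentwise inequality $\mathcal{J}(\mathcal{T}+\alpha_1\mathbf{I})^{-1}\phi_{\alpha_2}\ggs r(\alpha_2)\phi_{\alpha_2}$, the paper closes by invoking Theorem~19.3(d) of Deimling \cite{Deimling1985}, whereas you carry out the dual-eigenfunction pairing $\langle\mathcal{E}_1,\cdot\rangle$ explicitly; your version is slightly more self-contained and is essentially the content of the cited theorem. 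Both routes to strict monotonicity and to the lower bound in $(ii)$ are standard, so nothing is gained or lost beyond self-containedness. The continuity argument in $(iv)$ and the argument for $(i)$ and $(iii)$ coincide with the paper's.
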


\begin{proof}
$(i)$ By Lemma \ref{compactness}  and the Krein-Rutman Theorem,
$r(\alpha)=r(\mathcal J(\mathcal{T}+\alpha \textbf{I})^{-1})$ is the unique eigenvalue which corresponds to an eigenvector $\Phi\in D_+^0$, namely
\[
\mathcal J(\mathcal{T}+\alpha \textbf{I})^{-1}\Phi=r(\alpha) \Phi.
\]
Letting $\Psi:=(\mathcal{T}+\alpha \textbf{I})^{-1}\Phi$, then
\[
(\mathcal{T}+\alpha \textbf{I})^{-1}\mathcal J\Psi=r(\alpha)\Psi,\; \Psi\in D_+^0.
\]
Applying the Krein-Rutman theorem to $(\mathcal{T}+\alpha \textbf{I})^{-1}\mathcal J$, we similarly conclude that
$r((\mathcal{T}+\alpha\mathbf{I} )^{-1}\mathcal J)$ is the only eigenvalue which corresponds to an eigenvector in $D_+^0$.
Thus the above identity implies that $r(\alpha)=r((\mathcal{T}+\alpha\mathbf{I} )^{-1}\mathcal J)$. This proves $(i)$.

$(ii)$  By Lemma \ref{lemma_eigen1} and Lemma \ref{invert},
for $x\in[-l,l]$, we have, with $\bm{\tilde{\theta}}=(\tilde{\theta}_1,\tilde{\theta}_2)=(\delta_1\theta_*,\theta_*)$,
\begin{equation*}
\mathcal{J}(\mathcal{T}+\alpha\bm{I} )^{-1}(\tilde{\theta}_1,\tilde{\theta}_2)(x)=
\left(\begin{matrix}\displaystyle
\frac{d_1}{\lambda_{1}+\alpha}\int^{l}_{-l}J_1(x-y)\tilde{\theta}_1(y)dy\\
\displaystyle
\frac{d_2}{\lambda_{1}+\alpha}\int^{l}_{-l}J_2(x-y)\tilde{\theta}_2(y)dy\\
\end{matrix}\right)^T.
\end{equation*}
It follows from $\textbf{(J)}$ that there exist a small constant $\varepsilon>0$ and some positive constant $\delta^0>0$ such that, for $i=1,2$,
\begin{equation*}
J_i(x-y)\geq\delta^0 \mbox{ if }|x-y|\leq \varepsilon.
\end{equation*}
Let $\tilde{\theta}_m:=\min_{x\in[-l,l]}\min\{\tilde{\theta}_1(x),\tilde{\theta}_2(x)\}$.
Then
\begin{equation*}\displaystyle
\int^{l}_{-l}J_i(x-y)\tilde{\theta}_i(y)dy
\geq\varepsilon\delta^0\tilde{\theta}_m \mbox{ for any } x\in [-l,l].
\end{equation*}
Since
\begin{equation*}\displaystyle
\lim_{\alpha\searrow -\lambda_{1}}\frac{1}{\lambda_{1}+\alpha}=\infty,
\end{equation*}
given any $\gamma>1$, for $0<\alpha+\lambda_{1}\ll1$ we thus have
\begin{equation}\label{gamma}\displaystyle
\frac{d_i}{\lambda_{1}+\alpha}\int^{l}_{-l}J_i(x-y)\tilde{\theta}_i(y)dy
\geq \frac{d_i}{\lambda_1+\alpha}\frac{\varepsilon \delta^0\tilde \theta_m}{\|\tilde \theta_i\|_\infty}\tilde\theta_i(x)\geq  \gamma \tilde{\theta}_i(x) \mbox{ for } x\in [-l,l],\ i=1,2.
\end{equation}
By \eqref{gamma}, we see that, for any integer $n\geq1$, $(\mathcal{J}(\mathcal{T}+\alpha\mathbf{I} )^{-1})^n\bm{\tilde{\theta}}\geq \gamma^n  \bm{\tilde{\theta}}$, and hence
\begin{equation*}
\|(\mathcal{J}(\mathcal{T}+\alpha\mathbf{I} )^{-1})^n\|\|\bm{\tilde{\theta}}\|\geq \|(\mathcal{J}(\mathcal{T}+
\alpha\mathbf{I} )^{-1})^n\bm{\tilde{\theta}}\|\geq \gamma^n\|\bm{\tilde{\theta}}\|.
\end{equation*}
Therefore,
\begin{equation*}
r(\alpha)=\lim_{n\rightarrow\infty}\|(\mathcal{J}(\mathcal{T}+\alpha\mathbf{I} )^{-1})^n\|^{\frac{1}{n}}\geq\gamma,
\end{equation*}
which clearly implies the desired conclusion.

$(iii)$ 
For any $\alpha>-\lambda_1$,
\[
r(\alpha)\leq \|\mathcal{J}(\mathcal{T}+\alpha\mathbf{I} )^{-1}\|\leq \|\mathcal{J}\|
\|(\mathcal{T}+\alpha\mathbf{I} )^{-1}\|,
\]
and it follows from $(ii)$ of Lemma \ref{invert} that $r(\alpha)\rightarrow 0$ as $\alpha$ goes to $\infty$.

$(iv)$ Some of the idea of proving the continuity follows the proof Lemma 2 in \cite{Burger1988}.
We first prove that $r(\alpha)$ is continuous for $\alpha>-\lambda_1$.
Assume that $\hat{\alpha},\alpha\in(-\lambda_1,\infty)$. Then we see
\begin{align*}
\mathcal{J}(\mathcal{T}+\alpha\mathbf{I} )^{-1}=&\mathcal{J}(\mathcal{T}+\hat{\alpha}\mathbf{I} )^{-1}
(T+\hat{\alpha}\mathbf{I} )(\mathcal{T}+\alpha\mathbf{I})^{-1}\\
=&\mathcal{J}(\mathcal{T}+\hat{\alpha}\mathbf{I} )^{-1}
+(\hat{\alpha}-\alpha)\mathcal{J}(\mathcal{T}+\hat{\alpha}\mathbf{I} )^{-1}(\mathcal{T}+\alpha\mathbf{I})^{-1},
\end{align*}
which gives
\begin{align*}
\|\mathcal{J}(\mathcal{T}+\alpha\mathbf{I} )^{-1}-\mathcal{J}(\mathcal{T}+\hat{\alpha}\mathbf{I} )^{-1}\|
\leq|\hat{\alpha}-\alpha|\|\mathcal{J}(\mathcal{T}+\hat{\alpha}\mathbf{I} )^{-1}\|
\|(\mathcal{T}+\alpha\mathbf{I})^{-1}\|.
\end{align*}
By $(ii)$ and $(iii)$ of Lemma \ref{invert},
$\sup_{\alpha\geq\hat{\alpha}}\|(\mathcal{T}+\alpha\mathbf{I})^{-1}\|<\infty$ for
any fixed $\hat{\alpha}>-\lambda_1$.
Hence, $\mathcal{J}(\mathcal{T}+\alpha \mathbf{I})^{-1}$ converges to
$\mathcal{J}(\mathcal{T}+\hat{\alpha}\mathbf{I})^{-1}$ as $\alpha\rightarrow\hat{\alpha}$. We may now apply the Krein-Rutman theorem to the strongly positive operator
$\mathcal{J}(\mathcal{T}+{\alpha}\mathbf{I})^{-1}$ to conclude  the continuity of $r(\alpha)$ for $\alpha>-\lambda_1$.

Next we prove that $r(\alpha_1)<r(\alpha_2)$ when $\alpha_1> \alpha_2> -\lambda_1$. For $\alpha=\alpha_1$,
 there exists a pair of eigenfunctions $\Phi=(\phi_1,\phi_2)\in D_{+}^0$
such that the following hold:
\begin{equation*}\label{specral_1}
r(\alpha_1)\Phi=\mathcal{J}(\mathcal{T}+\alpha_1\mathbf{I} )^{-1}\Phi.
\end{equation*}
Now we claim that $\alpha_1>\alpha_2$ implies
\begin{equation}\label{specral_2}
(\mathcal{T}+\alpha_1\mathbf{I} )^{-1}\Phi \llp (\mathcal{T}+\alpha_2\mathbf{I} )^{-1}\Phi.
\end{equation}

By the positivity of $\Phi$ and $(\mathcal{T}+\alpha_i\mathbf{I} )^{-1}$ ($i=1,2$), we see that
$\Psi_1:=(\mathcal{T}+\alpha_1\mathbf{I} )^{-1}\Phi$ and $ \Psi_2:=(\mathcal{T}+\alpha_2\mathbf{I} )^{-1}\Phi$
belong to $ D_{+}^0$. Since
\begin{equation*}
\Phi=(\mathcal{T}+\alpha_1\mathbf{I} )\Psi_1=(\mathcal{T}+\alpha_2\mathbf{I} )\Psi_2,
\end{equation*}
we have
\begin{equation*}
\mathbf{0}=(\mathcal{T}+\alpha_2\mathbf{I} )\Psi_2-(\mathcal{T}+\alpha_2\mathbf{I}+(\alpha_1-\alpha_2)\mathbf{I} )\Psi_1
=(\mathcal{T}+\alpha_2 \mathbf{I})(\Psi_2-\Psi_1)-(\alpha_1-\alpha_2)\Psi_1.
\end{equation*}
As $\alpha_1-\alpha_2>0$, we see $(\mathcal{T}+\alpha_2 \mathbf{I})(\Psi_2-\Psi_1)\ggs \mathbf{0}$, which implies
 $\Psi_2\ggs\Psi_1$ since $(\mathcal{T}+\alpha_2I)^{-1}$ is strongly positive. The claim is proved.

By the definition of  $\mathcal{J}$ and \eqref{specral_2}, for $\Phi\in D_{+}^0$,
\begin{equation*}
\mathcal{J}(\mathcal{T}+\alpha_1\mathbf{I})^{-1}\Phi\llp\mathcal{J}(\mathcal{T}+\alpha_2\mathbf{I})^{-1}\Phi.
\end{equation*}
By applying part $(d)$ of Theorem 19.3 in \cite{Deimling1985}, we see that $r(\alpha_1)<r(\alpha_2)$.
\end{proof}
\begin{theorem}\label{PEV_exist}
Problem
\eqref{eigenprobelm1} admits a unique
eigenvalue $\lambda_0$ corresponding to a pair of positive eigenfunctions.
\end{theorem}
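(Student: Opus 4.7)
The plan is to reduce \eqref{eigenprobelm1} to a fixed-point condition on the spectral-radius function $r(\alpha)$ already analysed in Lemma \ref{spectral_J}. Observe that $(\mathcal{J}-\mathcal{T})\bm{\theta}=\lambda\bm{\theta}$ is equivalent to $\mathcal{J}\bm{\theta}=(\mathcal{T}+\lambda\mathbf{I})\bm{\theta}$, and when $\lambda>-\lambda_1$ Lemma \ref{invert}$(i)$ lets us recast this as $\tilde{\mathcal{M}}_{\lambda}\bm{\theta}=\bm{\theta}$, where $\tilde{\mathcal{M}}_{\lambda}:=(\mathcal{T}+\lambda\mathbf{I})^{-1}\mathcal{J}$. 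By Lemma \ref{compactness} this operator is strongly positive and compact, so the Krein-Rutman theorem tells us that a positive eigenvector of $\tilde{\mathcal{M}}_{\lambda}$ can only correspond to its spectral radius. Hence $\lambda>-\lambda_1$ admits a positive eigenpair for \eqref{eigenprobelm1} if and only if $r(\lambda)=1$.

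For existence, Lemma \ref{spectral_J} gives that $r(\alpha)$ is continuous and strictly decreasing on $(-\lambda_1,\infty)$ with $r(\alpha)\to\infty$ as $\alpha\searrow -\lambda_1$ and $r(\alpha)\to 0$ as $\alpha\to\infty$. The intermediate value theorem then produces a unique $\lambda_0\in(-\lambda_1,\infty)$ with $r(\lambda_0)=1$, and Krein-Rutman applied to the strongly positive compact operator $\tilde{\mathcal{M}}_{\lambda_0}$ yields $\bm{\theta}_0\in D_+^0$ with $\tilde{\mathcal{M}}_{\lambda_0}\bm{\theta}_0=\bm{\theta}_0$, which unravels to $(\mathcal{J}-\mathcal{T})\bm{\theta}_0=\lambda_0\bm{\theta}_0$.

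For uniqueness, suppose some $\lambda\in\mathbb{R}$ is an eigenvalue of \eqref{eigenprobelm1} with positive eigenfunction $\bm{\theta}\in D_+^0$. Writing $(\mathcal{T}+\lambda\mathbf{I})\bm{\theta}=(\mathcal{T}+\lambda_0\mathbf{I})\bm{\theta}+(\lambda-\lambda_0)\bm{\theta}$ and applying $(\mathcal{T}+\lambda_0\mathbf{I})^{-1}$ to $\mathcal{J}\bm{\theta}=(\mathcal{T}+\lambda\mathbf{I})\bm{\theta}$ gives the perturbation identity
\[
\tilde{\mathcal{M}}_{\lambda_0}\bm{\theta}=\bm{\theta}+(\lambda-\lambda_0)(\mathcal{T}+\lambda_0\mathbf{I})^{-1}\bm{\theta}.
\]
By the strong positivity of $(\mathcal{T}+\lambda_0\mathbf{I})^{-1}$ (Lemma \ref{invert}$(i)$), one has $(\mathcal{T}+\lambda_0\mathbf{I})^{-1}\bm{\theta}\in D_+^0$. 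If $\lambda>\lambda_0$ then $\tilde{\mathcal{M}}_{\lambda_0}\bm{\theta}\ggs\bm{\theta}$, and part $(d)$ of Theorem 19.3 in \cite{Deimling1985} (invoked exactly as in the proof of Lemma \ref{spectral_J}$(iv)$) forces $r(\lambda_0)>1$, contradicting $r(\lambda_0)=1$; the case $\lambda<\lambda_0$ is symmetric and forces $r(\lambda_0)<1$. Hence $\lambda=\lambda_0$.

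The main subtlety to navigate is that a candidate eigenvalue $\lambda$ need not a priori satisfy $\lambda>-\lambda_1$, so $(\mathcal{T}+\lambda\mathbf{I})^{-1}$ may fail to exist. I would sidestep this by always inverting at the known value $\lambda_0$, which both produces the perturbation identity above and reduces the uniqueness question directly to the strict monotonicity of $r$ in Lemma \ref{spectral_J}$(iv)$; no separate analysis of the range $\lambda\le -\lambda_1$ is then required.
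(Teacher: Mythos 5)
Your proof is correct and follows essentially the same route as the paper: existence via the monotone continuous spectral-radius function $r(\alpha)$ from Lemma \ref{spectral_J} together with the intermediate value theorem and Krein--Rutman, and uniqueness via the perturbation identity obtained by inverting $\mathcal{T}+\lambda_0\mathbf{I}$ at the already-constructed $\lambda_0$. The only cosmetic difference is in the final step of uniqueness: the paper applies the strictly positive dual eigenfunctional of $(\mathcal{T}+\lambda_0\mathbf{I})^{-1}\mathcal{J}$ directly to the perturbation identity, obtaining $(\lambda-\lambda_0)\langle\mathcal{E},\Psi\rangle=0$ in a single line without case splitting, whereas you split into $\lambda>\lambda_0$ and $\lambda<\lambda_0$ and invoke the Deimling comparison theorem; both arguments rest on the same Krein--Rutman machinery.
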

\begin{proof}
By Lemma \ref{spectral_J}, there exists  a unique $\alpha_0>-\lambda_{1}$ such that
$r(\alpha_0)=r(\mathcal{J}(\mathcal{T}+\alpha_0\mathbf{I} )^{-1})=1$. Therefore, there exists $\Phi\in D_+^0$ such that
$\mathcal{J}(\mathcal{T}+\alpha_0\mathbf{I} )^{-1}\Phi=\Phi$, and so $\Psi:=(\mathcal{T}+\alpha_0\mathbf{I} )^{-1}\Phi\in D_+^0$
and
\[
(\mathcal{T}+\alpha_0\mathbf{I} )^{-1}\mathcal{J}\Psi=\Psi, \mbox{ which implies } (\mathcal{J}-\mathcal T)\Psi=\alpha_0 \Psi.
\]
Thus $\lambda_0:=\alpha_0$ is an eigenvalue of \eqref{eigenprobelm1} corresponding to
a pair of positive eigenfunctions in $D_{+}^0$.

Conversely, if $\lambda_0$  is an eigenvalue of \eqref{eigenprobelm1} associated with an eigenfunction pair  $\Phi=(\phi_1,\phi_2)\in D_+\setminus\{\bf 0\}$, then
$(\mathcal{J}-\mathcal T)\Phi=\lambda_0 \Phi$. Hence, with $\alpha_0>-\lambda_1$ as given above, we obtain
\[
(\mathcal{T}+\alpha_0\mathbf{I} )^{-1}\mathcal{J}\Phi=\Phi+(\lambda_0-\alpha_0)\Psi,\ \ \Psi:=(\mathcal T+\alpha_0I)^{-1}\Phi\in D_+^0.
\]
Let $D^*$ be the dual space of $D$ and let $V: D^*\rightarrow D^*$ be the conjugate operator of $(\mathcal{T}+\alpha_0\mathbf{I} )^{-1}\mathcal J$.
By the Krein-Rutman Theorem (see, e.g., Theorem 1.1 of \cite{Du2006}), $r(\alpha_0)=1$ is the principal eigenvalue of the operator
$V$  corresponding to a strongly positive eigenvector $\mathcal{E}\in D^*$,
i.e., $
V\mathcal{E}=\mathcal{E}.
$
It follows that
\[
\langle \mathcal{E}, \Phi\rangle=\langle V\mathcal{E}, \Phi\rangle=\langle \mathcal E, (\mathcal{T}+\alpha_0\mathbf{I} )^{-1}\mathcal{J}\Phi\rangle =\langle \mathcal{E}, \Phi\rangle
+(\lambda_0-\alpha_0)\langle \mathcal E, \Psi\rangle.
\]
Hence $(\lambda_0-\alpha_0)\langle \mathcal E, \Psi\rangle=0$, which implies $\lambda_0=\alpha_0$ since $\langle \mathcal E, \Psi\rangle>0$.
\end{proof}

As usual, we will call this $\lambda_0$ the ``principal
eigenvalue'' of \eqref{eigenprobelm1}. Next we give several results on how to estimate $\lambda_0$.


\begin{lemma}\label{upper-eigen-lemma}
 Let $\lambda_0$ be the
principal eigenvalue of \eqref{eigenprobelm1}.
If there exist a pair of  functions $\bm{\varphi}=(\varphi_1,\varphi_2)\in D_{+}$ with $\varphi_1\not\equiv 0, \varphi_2\not\equiv 0$, and a number
$\bar{\lambda}$ such that
\begin{equation}\label{upper_eigenvalue}
\begin{cases}
\displaystyle
\bar{\lambda}\varphi_1\geq d_1\int_{-l}^{l}J_1(x-y)\varphi_1(y)dy-d_1\varphi_1-a_{11}\varphi_1
\displaystyle
+a_{12}\int_{-l}^{l}K(x-y)\varphi_2(y)dy,&x\in[-l,l],\\
\displaystyle\bar{\lambda}\varphi_2\geq d_2\int_{-l}^{l}J_2(x-y)\varphi_2(y)dy-d_2\varphi_2-a_{22}\varphi_2+G'(0)\varphi_1,&x\in[-l,l],\\
\end{cases}
\end{equation}
then $\bar{\lambda}\geq\lambda_0$ holds.  Moreover, $\lambda_0=\bar{\lambda}$ if and only if equalities hold in \eqref{upper_eigenvalue}.
\end{lemma}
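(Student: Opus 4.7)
My plan is to recast \eqref{upper_eigenvalue} as an operator inequality for $\mathcal{T}+\alpha\mathbf{I}$ and $\mathcal{J}$, and then pair it with a strongly positive dual eigenfunctional supplied by the Krein--Rutman theorem, mirroring the argument used for Theorem \ref{PEV_exist}. In the notation of Section 3, \eqref{upper_eigenvalue} is equivalent to
\[
(\mathcal{T}+\bar{\lambda}\mathbf{I})\bm{\varphi}\succeq\mathcal{J}\bm{\varphi},\qquad \bm{\varphi}=(\varphi_1,\varphi_2)\in D_+\setminus\{\mathbf{0}\}.
\]
For any $\alpha>\max\{\bar{\lambda},-\lambda_1\}$, adding the nonnegative term $(\alpha-\bar{\lambda})\bm{\varphi}$ to both sides yields $(\mathcal{T}+\alpha\mathbf{I})\bm{\varphi}\succeq\mathcal{J}\bm{\varphi}$, and applying the bounded strongly positive operator $(\mathcal{T}+\alpha\mathbf{I})^{-1}$ from Lemma \ref{invert}(i) gives
\[
\bm{\varphi}\succeq (\mathcal{T}+\alpha\mathbf{I})^{-1}\mathcal{J}\bm{\varphi}.
\]

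As in the proof of Theorem \ref{PEV_exist}, applying the Krein--Rutman theorem to the compact strongly positive operator $(\mathcal{T}+\alpha\mathbf{I})^{-1}\mathcal{J}$ (cf.\ Lemma \ref{compactness}) produces a strongly positive eigenfunctional $\mathcal{E}_\alpha\in D^*$ satisfying $\langle \mathcal{E}_\alpha,(\mathcal{T}+\alpha\mathbf{I})^{-1}\mathcal{J}\bm{\theta}\rangle = r(\alpha)\langle \mathcal{E}_\alpha,\bm{\theta}\rangle$ for all $\bm{\theta}\in D$. Since $\bm{\varphi}\in D_+\setminus\{\mathbf{0}\}$, strong positivity of $\mathcal{E}_\alpha$ gives $\langle \mathcal{E}_\alpha,\bm{\varphi}\rangle>0$; pairing the preceding inequality with $\mathcal{E}_\alpha$ and cancelling yields $r(\alpha)\leq 1=r(\lambda_0)$. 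Lemma \ref{spectral_J}(iv) then forces $\alpha\geq\lambda_0$. Letting $\alpha$ decrease to $\max\{\bar{\lambda},-\lambda_1\}$, we obtain $\max\{\bar{\lambda},-\lambda_1\}\geq\lambda_0$, and since $\lambda_0>-\lambda_1$ (as recorded in the proof of Theorem \ref{PEV_exist}), we conclude $\bar{\lambda}\geq\lambda_0$.

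For the equality characterization, the ``if'' direction is immediate: equality in \eqref{upper_eigenvalue} makes $\bm{\varphi}$ an eigenfunction pair with eigenvalue $\bar{\lambda}$, and since $\bm{\varphi}=(\mathcal{T}+\bar{\lambda}\mathbf{I})^{-1}\mathcal{J}\bm{\varphi}\in D_+^0$ by strong positivity (together with the fact that $\mathcal{J}$ maps $D_+\setminus\{\mathbf{0}\}$ into itself, noted in Lemma \ref{compactness}), Theorem \ref{PEV_exist} forces $\bar{\lambda}=\lambda_0$. For the converse, suppose $\bar{\lambda}=\lambda_0$ but strict inequality holds somewhere in \eqref{upper_eigenvalue}. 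Then $\bm{w}:=\bm{\varphi}-(\mathcal{T}+\lambda_0\mathbf{I})^{-1}\mathcal{J}\bm{\varphi}\in D_+\setminus\{\mathbf{0}\}$, so $\langle \mathcal{E}_{\lambda_0},\bm{w}\rangle>0$ and, using $r(\lambda_0)=1$,
\[
\langle \mathcal{E}_{\lambda_0},\bm{\varphi}\rangle=r(\lambda_0)\langle \mathcal{E}_{\lambda_0},\bm{\varphi}\rangle+\langle \mathcal{E}_{\lambda_0},\bm{w}\rangle > \langle \mathcal{E}_{\lambda_0},\bm{\varphi}\rangle,
\]
a contradiction.

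The main subtlety is producing and exploiting the strongly positive dual eigenfunctional $\mathcal{E}_\alpha$: we need both the spectral identity for its pairing with $(\mathcal{T}+\alpha\mathbf{I})^{-1}\mathcal{J}$ and the implication $\bm{\varphi}\in D_+\setminus\{\mathbf{0}\}\Rightarrow\langle \mathcal{E}_\alpha,\bm{\varphi}\rangle>0$. Both are consequences of the Krein--Rutman theorem once the compactness and strong positivity of $(\mathcal{T}+\alpha\mathbf{I})^{-1}\mathcal{J}$ have been established by Lemmas \ref{invert} and \ref{compactness}; beyond this, the argument reduces to the monotonicity of $r$ provided by Lemma \ref{spectral_J}.
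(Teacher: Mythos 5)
Your proof is correct and follows essentially the same strategy as the paper: derive $\bm{\varphi}\succeq(\mathcal{T}+\alpha\mathbf{I})^{-1}\mathcal{J}\bm{\varphi}$, pair with the strongly positive Krein--Rutman dual eigenfunctional $\mathcal{E}_\alpha$ to force $r(\alpha)\leq 1=r(\lambda_0)$, and invoke the strict monotonicity of $r$ from Lemma \ref{spectral_J}; the equality characterization via $\bm{w}=\bm{\varphi}-(\mathcal{T}+\lambda_0\mathbf{I})^{-1}\mathcal{J}\bm{\varphi}$ and $\langle\mathcal{E}_{\lambda_0},\bm{w}\rangle$ is also the paper's argument. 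The only cosmetic difference is that you run the inequality directly over all $\alpha>\max\{\bar{\lambda},-\lambda_1\}$ and pass to the limit, whereas the paper fixes a single $\lambda_2\in(\max\{-\lambda_1,\bar{\lambda}\},\lambda_0)$ inside a proof by contradiction.
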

\begin{proof}
If the first assertion does not hold, then $\lambda_0>\bar{\lambda}$, and we can fix a constant $\lambda_2$ such that
$\lambda_0>\lambda_2>\max\{-\lambda_1,\bar{\lambda}\}$. By \eqref{upper_eigenvalue}, $\mathcal{J}{\bm{\varphi}}-\mathcal{T}{\bm{\varphi}}\preceq \bar\lambda {\bm{\varphi}}\preceq \lambda_2{\bm{\varphi}}$
and hence
\begin{equation*}
{\bm{\varphi}}\succeq (\mathcal{T}+\lambda_2\mathbf{I} )^{-1}\mathcal J{\bm{\varphi}}.
\end{equation*}

Let $D^*$ be the dual space of $D$ and $V_{\lambda_2}: D^*\rightarrow D^*$ the conjugate operator of $(\mathcal{T}+\lambda_2\mathbf{I} )^{-1}\mathcal J$.
By the Krein-Rutman Theorem (see Theorem 1.1 of \cite{Du2006}), $r(\lambda_2)$ is the principal eigenvalue of the operator
$V_{\lambda_2}$  corresponding to a strongly positive eigenvector $\mathcal{E}_{\lambda_2}\in D^*$: $
V_{\lambda_2}\mathcal{E}_{\lambda_2}=r(\lambda_2)\mathcal{E}_{\lambda_2}.
$
Therefore, due to ${\bm{\varphi}}\in D_{+}\setminus\{\bf 0\}$, we have $\langle\mathcal{E}_{\lambda_2} ,{\bm{\varphi}}\rangle>0$ and
\begin{align*}
r(\lambda_2)\langle \mathcal{E}_{\lambda_2} ,{\bm{\varphi}} \rangle=
\langle V_{\lambda_2}\mathcal{E}_{\lambda_2} ,{\bm{\varphi}} \rangle
=\langle\mathcal{E}_{\lambda_2} ,(\mathcal{T}+\lambda_2\mathbf{I} )^{-1}\mathcal J{\bm{\varphi}} \rangle\leq
\langle\mathcal{E}_{\lambda_2} ,{\bm{\varphi}}\rangle.
\end{align*}
This implies that $r(\lambda_2)\leq 1=r(\lambda_0)$ and thus $\lambda_2\geq\lambda_0$, which is a contradiction
with $\lambda_0>\lambda_2$. This proves $\bar{\lambda}\geq\lambda_0$.

Next, we prove the second assertion.
If the equalities hold in \eqref{upper_eigenvalue}, then clearly $\lambda_0=\bar{\lambda}$ by the uniqueness of the principal eigenvalue. Moreover, $(\varphi_1,\varphi_2)\in D_{+}^0$ is a corresponding positive eigenfunction pair.
It remains to prove that the equalities must hold in \eqref{upper_eigenvalue} if $\bar{\lambda}=\lambda_0$.
Suppose that $\Phi=(\varphi_1,\varphi_2)\in D_{+}\setminus\{\bf 0\}$ satisfies \eqref{upper_eigenvalue}  with $\bar\lambda=\lambda_0$.
Then
\begin{equation}\label{upper_eigenvalue1}
\Psi:=\Phi-(\mathcal{T}+{\lambda_0}\mathbf{I} )^{-1}\mathcal J\Phi\preceq\mathbf{0}.
\end{equation}
With $V$ and $\mathcal E$ defined as in the proof of Theorem \ref{PEV_exist}, we  obtain from the calculation there that
\[
\langle \mathcal E, \Psi\rangle =\langle \mathcal E, \Phi\rangle -\langle \mathcal E, (\mathcal{T}+{\lambda_0}\mathbf{I} )^{-1}\mathcal J\Phi\rangle =0.
\]
Since $\mathcal E$ is strongly positive, it follows that $\Psi={\bf 0}$, and we may now use \eqref{upper_eigenvalue1} to see that
 equalities hold  in \eqref{upper_eigenvalue}.
\end{proof}

\begin{lemma}\label{lower-eigen-lemma}
 Let $\lambda_0$ be the
principal eigenvalue of \eqref{eigenprobelm1}.
If there exist  $\bm{\eta}=(\eta_1,\eta_2)\in D_{+}\setminus \{\bf 0\}$ and
$\underline{\lambda}$ satisfying
\begin{equation}\label{lower-eigen-eq}
\begin{cases}
\displaystyle
\underline{\lambda} \eta_1\leq d_1\int_{-l}^{l}J_1(x-y)\eta_1(y)dy-d_1\eta_1-a_{11}\eta_1
\displaystyle
+a_{12}\int_{-l}^{l}K(x-y)\eta_2(y)dy,&x\in[-l,l],\\
\displaystyle
\underline{\lambda} \eta_2\leq d_2\int_{-l}^{l}J_2(x-y)\eta_2(y)dy-d_2\eta_2-a_{22}\eta_2+G'(0)\eta_1,&x\in[-l,l].\\
\end{cases}
\end{equation}
then $\underline{\lambda}\leq\lambda_0$ holds. Moreover, $\underline{\lambda}=\lambda_0$ if and only
if equalities hold in \eqref{lower-eigen-eq}.
\end{lemma}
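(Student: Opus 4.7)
The plan is to mirror the contradiction argument used for Lemma \ref{upper-eigen-lemma}, with all inequalities reversed. Suppose for contradiction that $\underline{\lambda}>\lambda_0$, and fix a constant $\lambda_2$ with $\lambda_0<\lambda_2<\underline{\lambda}$; note that automatically $\lambda_2>-\lambda_1$ since $\lambda_0>-\lambda_1$. Rewriting \eqref{lower-eigen-eq} as $\mathcal{J}\bm{\eta}-\mathcal{T}\bm{\eta}\succeq \underline{\lambda}\bm{\eta}\succeq \lambda_2\bm{\eta}$, we obtain $\mathcal{J}\bm{\eta}\succeq (\mathcal{T}+\lambda_2\mathbf{I})\bm{\eta}$, and applying the strongly positive operator $(\mathcal{T}+\lambda_2\mathbf{I})^{-1}$ from Lemma \ref{invert} yields
\begin{equation*}
(\mathcal{T}+\lambda_2\mathbf{I})^{-1}\mathcal{J}\bm{\eta}\succeq \bm{\eta}.
\end{equation*}

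Next I would pair this with the strongly positive Krein--Rutman eigenvector $\mathcal{E}_{\lambda_2}\in D^*$ of the conjugate operator $V_{\lambda_2}$ of $(\mathcal{T}+\lambda_2\mathbf{I})^{-1}\mathcal{J}$, satisfying $V_{\lambda_2}\mathcal{E}_{\lambda_2}=r(\lambda_2)\mathcal{E}_{\lambda_2}$. Since $\bm{\eta}\in D_+\setminus\{\mathbf{0}\}$ and $\mathcal{E}_{\lambda_2}$ is strongly positive, $\langle\mathcal{E}_{\lambda_2},\bm{\eta}\rangle>0$, and the pairing gives
\begin{equation*}
r(\lambda_2)\langle\mathcal{E}_{\lambda_2},\bm{\eta}\rangle
=\langle\mathcal{E}_{\lambda_2},(\mathcal{T}+\lambda_2\mathbf{I})^{-1}\mathcal{J}\bm{\eta}\rangle
\geq \langle\mathcal{E}_{\lambda_2},\bm{\eta}\rangle,
\end{equation*}
so $r(\lambda_2)\geq 1=r(\lambda_0)$. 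By the strict monotonicity of $r$ in Lemma \ref{spectral_J}(iv) this forces $\lambda_2\leq \lambda_0$, contradicting $\lambda_2>\lambda_0$. Hence $\underline{\lambda}\leq \lambda_0$.

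For the equality statement, the ``if'' direction is essentially immediate from Theorem \ref{PEV_exist}: equalities in \eqref{lower-eigen-eq} make $\bm{\eta}$ a nonnegative eigenfunction of \eqref{eigenprobelm1} with eigenvalue $\underline{\lambda}$, and by the uniqueness part of Theorem \ref{PEV_exist}, $\underline{\lambda}=\lambda_0$. For the ``only if'' direction I would copy the final argument of Lemma \ref{upper-eigen-lemma}: put $\Psi:=(\mathcal{T}+\lambda_0\mathbf{I})^{-1}\mathcal{J}\bm{\eta}-\bm{\eta}$, which is $\succeq \mathbf{0}$ by the rearranged inequality, and pair it with the strongly positive dual eigenvector $\mathcal{E}$ associated with $r(\lambda_0)=1$ introduced in the proof of Theorem \ref{PEV_exist}. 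The relation $V\mathcal{E}=\mathcal{E}$ gives $\langle\mathcal{E},\Psi\rangle=0$, and strong positivity of $\mathcal{E}$ then forces $\Psi=\mathbf{0}$, which translates back into equalities throughout \eqref{lower-eigen-eq}.

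The main thing to watch is bookkeeping of inequality directions — every step that worked for Lemma \ref{upper-eigen-lemma} has to be reflected so that the strongly positive inverse preserves the reversed ordering, and the strict monotonicity of $r(\alpha)$ cuts the correct way to produce the contradiction. I do not expect a genuinely new difficulty: the argument is a direct dual of the preceding lemma, relying on the same structural ingredients (Lemmas \ref{invert}, \ref{compactness}, \ref{spectral_J}, Theorem \ref{PEV_exist}, and the Krein--Rutman theorem).
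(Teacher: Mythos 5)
Your proof is correct, but for the first assertion the paper actually takes a shorter route than the one you chose. You mirror the dual Krein--Rutman argument from Lemma \ref{upper-eigen-lemma}: introduce an intermediate $\lambda_2\in(\lambda_0,\underline\lambda)$, apply $(\mathcal T+\lambda_2\mathbf I)^{-1}$, pair with the strongly positive dual eigenvector $\mathcal E_{\lambda_2}$, and invoke strict monotonicity of $r(\cdot)$. This is valid (and the paper explicitly acknowledges that the Lemma \ref{upper-eigen-lemma} template carries over). The paper's own proof of $\underline\lambda\le\lambda_0$, however, dispenses with the dual eigenvector and the intermediate $\lambda_2$: from \eqref{lower-eigen-eq} one gets directly $\bm\eta\preceq(\mathcal T+\underline\lambda\mathbf I)^{-1}\mathcal J\bm\eta$, iterates to $\bm\eta\preceq\big((\mathcal T+\underline\lambda\mathbf I)^{-1}\mathcal J\big)^n\bm\eta$, and then Gelfand's formula gives $1\le\lim_n\|\big((\mathcal T+\underline\lambda\mathbf I)^{-1}\mathcal J\big)^n\|^{1/n}=r(\underline\lambda)$, which together with the strict monotonicity $r(\underline\lambda)<r(\lambda_0)=1$ yields the contradiction. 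What the paper's version buys is economy: no conjugate operator, no auxiliary $\lambda_2$, just cone preservation and the spectral radius formula. What your version buys is uniformity: it is a literal reflection of the preceding lemma, so a reader who has internalised Lemma \ref{upper-eigen-lemma} needs no new idea. For the equality case your argument is exactly the one the paper leaves implicit in the phrase ``similar to Lemma \ref{upper-eigen-lemma},'' so there is nothing to flag there.
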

\begin{proof}
The proof for this lemma is similar to Lemma \ref{upper-eigen-lemma}. The
first assertion can also be proved by a simpler argument as follows.  Arguing by contradiction, we assume $\underline{\lambda}>\lambda_0$.
By \eqref{lower-eigen-eq} we have
\begin{equation*}
\bm{{\eta}}\preceq(\mathcal{T}+\underline{\lambda}\mathbf{I})^{-1}\mathcal J\bm{{\eta}}, \mbox{ which implies }
\bm{{\eta}}\preceq((\mathcal{T}+\underline{\lambda}\mathbf{I})^{-1}\mathcal J)^n\bm{{\eta}},\ n=1,2,...
\end{equation*}
Combining this with Lemma \ref{spectral_J}, we have, for any integer $n\geq1$,
\begin{equation*}
1\leq\lim_{n\rightarrow\infty}\|\big((\mathcal{T}+\underline{\lambda}
\mathbf{I})^{-1}\mathcal J\big)^n\|^{\frac{1}{n}}
=r(\underline{\lambda})<r(\lambda_0)=1,
\end{equation*}
which is a contradiction.
\end{proof}

\begin{lemma}\label{PEV-l}
 Let $\lambda_0=\lambda_0(l)$ denote the
principal eigenvalue of \eqref{eigenprobelm1} to stress its dependence on $l$. Then $\lambda_0(l)$ is strictly increasing and continuous with respect to $l\in(0,\infty)$.
\end{lemma}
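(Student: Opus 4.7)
The approach splits into strict monotonicity followed by continuity.

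For monotonicity, fix $0<l_1<l_2$ and take positive eigenfunctions $(\tilde\phi_1,\tilde\phi_2)$ on $[-l_2,l_2]$ corresponding to $\lambda_0(l_2)$ from Theorem~\ref{PEV_exist}; their restriction $(\phi_1,\phi_2)$ to $[-l_1,l_1]$ is still componentwise strictly positive. Since $J_1,J_2,K\ge 0$, shrinking the integration domain in the eigenvalue equation gives, for $x\in[-l_1,l_1]$,
\begin{align*}
\lambda_0(l_2)\phi_1&\ge d_1\int_{-l_1}^{l_1}J_1(x-y)\phi_1(y)dy-(d_1+a_{11})\phi_1+a_{12}\int_{-l_1}^{l_1}K(x-y)\phi_2(y)dy,\\
\lambda_0(l_2)\phi_2&\ge d_2\int_{-l_1}^{l_1}J_2(x-y)\phi_2(y)dy-(d_2+a_{22})\phi_2+G'(0)\phi_1,
\end{align*}
so Lemma~\ref{upper-eigen-lemma} applied on $[-l_1,l_1]$ yields $\lambda_0(l_2)\ge\lambda_0(l_1)$. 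Equality would, by the ``moreover'' clause, force $\int_{l_1}^{l_2}J_1(l_1-y)\tilde\phi_1(y)dy=0$; but $J_1(0)>0$ and continuity of $J_1$ make $J_1(l_1-y)>0$ on a right-neighborhood of $y=l_1$, where $\tilde\phi_1>0$, a contradiction. Hence $\lambda_0(l_1)<\lambda_0(l_2)$.

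For continuity at any $l_0>0$, take $l_n\to l_0$; the monotonicity just proved bounds $\{\lambda_0(l_n)\}$, so along a subsequence $\lambda_0(l_n)\to\lambda^*$, and it suffices to show $\lambda^*=\lambda_0(l_0)$. Choose positive eigenfunctions $(\phi_{1,n},\phi_{2,n})$ on $[-l_n,l_n]$ normalized by $\max\{\|\phi_{1,n}\|_\infty,\|\phi_{2,n}\|_\infty\}=1$. A direct computation from the explicit formula in Lemma~\ref{lemma_eigen1} shows $\lambda_1<\min(d_1+a_{11},d_2+a_{22})$, and since $\lambda_0(l)>-\lambda_1$, the denominator $\lambda_0(l_n)+d_i+a_{ii}$ is uniformly bounded below by a positive constant. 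Rewriting the eigenvalue equation as a fixed-point identity $\phi_{i,n}=(\lambda_0(l_n)+d_i+a_{ii})^{-1}[\text{integral terms}]$ and invoking uniform continuity of $J_1,J_2,K$ on bounded sets gives equicontinuity of $\{\phi_{i,n}\}$ on $[-l_n,l_n]$.

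I then extend each $\phi_{i,n}$ constantly outside $[-l_n,l_n]$ to a fixed interval $[-L,L]\supset\bigcup_n[-l_n,l_n]$; equicontinuity is preserved, and Arzel\`a--Ascoli supplies a subsequence with $\phi_{i,n}\to\phi_i^*$ uniformly on $[-L,L]$. Passing to the limit in the eigenvalue equation at each $x\in(-l_0,l_0)$ (the integral over $[-l_n,l_n]\triangle[-l_0,l_0]$ is of order $|l_n-l_0|$) shows that $(\phi_1^*,\phi_2^*)$ is a nonnegative continuous solution of~\eqref{eigenprobelm1} on $[-l_0,l_0]$ with eigenvalue $\lambda^*$. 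Picking $x_n\in[-l_n,l_n]$ with $\phi_{1,n}(x_n)+\phi_{2,n}(x_n)\ge 1$ and using $|x_n|\le l_n\to l_0$, a further subsequence gives $x_n\to x_0\in[-l_0,l_0]$, so uniform convergence yields $\phi_1^*(x_0)+\phi_2^*(x_0)\ge 1$; the limit is nontrivial, and the uniqueness part of Theorem~\ref{PEV_exist} forces $\lambda^*=\lambda_0(l_0)$.

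The main obstacle is the continuity step: the varying domain $[-l_n,l_n]$ blocks a direct limit argument, and one must rule out the possibility that the normalized eigenfunctions lose all their mass in the limit. Tracking the point $x_n$ where the normalization is realized and exploiting $|x_n|\le l_n\to l_0$ is the key device that keeps the mass inside $[-l_0,l_0]$, so that the limit pair is a bona fide nontrivial eigenfunction on $[-l_0,l_0]$.
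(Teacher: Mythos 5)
Your proof is correct. The monotonicity part follows the same route as the paper: restrict an eigenfunction pair for $\lambda_0(l_2)$ to $[-l_1,l_1]$, drop the nonnegative tail integrals to obtain the test-function inequalities, and apply Lemma~\ref{upper-eigen-lemma}; you make the strictness slightly more explicit by evaluating the dropped $J_1$-integral at $x=l_1$, whereas the paper simply records the inequalities as $\geq,\not\equiv$ and invokes the ``moreover'' clause.

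For continuity, however, you take a genuinely different route. The paper gives a quantitative $\varepsilon$-$\delta$ argument: it truncates (for $l_0<l$) or constantly extends (for $l_0>l$) the eigenfunction for $\lambda_0(l)$, estimates the error terms so produced in terms of $|l-l_0|$, and feeds these approximate eigenfunctions into Lemmas~\ref{upper-eigen-lemma} and~\ref{lower-eigen-lemma} to pin $\lambda_0(l_0)$ within $\varepsilon$ of $\lambda_0(l)$. You instead run a soft compactness argument: normalize the eigenfunctions on $[-l_n,l_n]$, deduce equicontinuity from the fixed-point rewriting of the eigenvalue equation (using the uniform positive lower bound on $\lambda_0(l_n)+d_i+a_{ii}$, which by monotonicity reduces to $\lambda_0(l_0/2)+d_i+a_{ii}>0$, guaranteed since $\lambda_1<\min_j(d_j+a_{jj})$), extend by constants to a common interval, extract a uniform limit by Arzel\`a--Ascoli, pass to the limit in the eigenvalue equation for each interior $x$, and then use the ``conversely'' direction of the proof of Theorem~\ref{PEV_exist} (which only needs the eigenfunction pair in $D_+\setminus\{\mathbf 0\}$, not in $D_+^0$) to identify the limit eigenvalue with $\lambda_0(l_0)$. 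Your device of tracking the point $x_n$ at which the normalization is attained is exactly what is needed to keep the limit nontrivial. The paper's approach is more computational but yields a modulus of continuity (Lipschitz near $l$, modulo the extension correction); yours is shorter on calculation but only gives sequential continuity, which here suffices. Both are valid; be sure to flag explicitly that the lower bound on $\lambda_0(l_n)+d_i+a_{ii}$ is made uniform in $n$ via the already-proved monotonicity, as the sentence ``since $\lambda_0(l)>-\lambda_1$'' elides the $l$-dependence of $\lambda_1$.
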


\begin{proof}
We first prove the monotonicity of $\lambda_0(l)$ with respect to $l$.
For $0<l_1<l_2$, let $(\zeta_1,\zeta_2)$ be a  positive eigenfunction pair associated with $\lambda_0(l_2)$; then, for $x\in[-l_1,l_1]$, we have
\begin{align*}
 &\lambda_0(l_2) \zeta_1+a_{11}\zeta_1-a_{12}\int_{-l_1}^{l_1}K(x-y)\zeta_2(y)dy\\
\geq, &\not\equiv\lambda_0(l_2) \zeta_1+a_{11}\zeta_1-a_{12}\int_{-l_2}^{l_2}K(x-y)\zeta_2(y)dy=\displaystyle d_1\int_{-l_2}^{l_2}J_1(x-y)\zeta_1(y)dy-d_1\zeta_1\\
\geq, &\not\equiv \displaystyle d_1\int_{-l_1}^{l_1}J_1(x-y)\zeta_1(y)dy-d_1\zeta_1.
\end{align*}
Similarly, for $x\in[-l_1,l_1]$, we have
\begin{align*}
 &\lambda_0(l_2) \zeta_2+a_{22}\zeta_2-G'(0)\zeta_1=\displaystyle d_2\int_{-l_2}^{l_2}J_2(x-y)\zeta_2(y)dy-d_2\zeta_2\\
\geq, &\not\equiv \displaystyle d_2\int_{-l_1}^{l_1}J_2(x-y)\zeta_2(y)dy-d_2\zeta_2.
\end{align*}
The above two inequalities imply that $\lambda_0(l_2)>\lambda_0(l_1)$ by applying Lemma \ref{upper-eigen-lemma}.

Next we prove the continuity of $\lambda_0(l)$ with respect to $l>0$. To achieve this, we will show that, for any $l>0$ and small $\epsilon>0$, there exists some constant $\delta>0$ such that
\begin{equation*}\label{continuity_inequality}
|\lambda_0(l)-\lambda_0(l_0)|<\epsilon \mbox{ if }|l_0-l|<\delta.
\end{equation*}

We first discuss the case when $l_0\in(l-\delta,l]$. By the above-proved monotonicity,
\begin{equation*}
\lambda_0(l_0)\leq\lambda_0(l)<\lambda_0(l)+\epsilon
\end{equation*}
for any $\epsilon>0$. So it suffices to find  $\delta$ such that
$\lambda_0(l)-\epsilon<\lambda_0(l_0)$ for $l_0\in(l-\delta,l]$.

We denote by $(\theta_1,\theta_2)$ a  positive eigenfunction pair associated with $\lambda_0(l)$. Define
\begin{equation*}\label{C_MC_m}
C_m:=\min\{\min_{x\in[-l,l]}\theta_1(x),\min_{x\in[-l,l]}\theta_2(x)\},
C_M:=\max\{\max_{x\in[-l,l]}\theta_1(x),\max_{x\in[-l,l]}\theta_2(x)\}.\\
\end{equation*}
For $x\in[-l_0,l_0]$, we have
\begin{align*}
&\displaystyle d_1\int_{-l}^{l}J_1(x-y)\theta_1(y)dy\\
=&\ d_1\int_{-l_0}^{l_0}J_1(x-y)\theta_1(y)dy+d_1\Big
[\int^{l}_{l_0}+\int^{-l_0}_{-l}\Big]J_1(x-y)\theta_1(y)dy\\
\leq &\ d_1\int_{-l_0}^{l_0}J_1(x-y)\theta_1(y)dy+2d_1\|J_1\|_{\infty}C_M(l-l_0)\\
\leq &\ d_1\int_{-l_0}^{l_0}J_1(x-y)\theta_1(y)dy+\frac{2d_1\|J_1\|_{\infty}C_M(l-l_0)}{C_m}\theta_1(x),
\end{align*}
and similarly
\begin{align*}
\displaystyle a_{12}\int_{-l}^{l}K(x-y)\theta_2(y)dy
\leq a_{12}\int_{-l_0}^{l_0}K(x-y)\theta_2(y)dy+\frac{2a_{12}\|K\|_{\infty}C_M(l-l_0)}{C_m}\theta_1(x).
\end{align*}
Therefore, for $x\in[-l_0,l_0]$, we have
\begin{align*} &-a_{12}\int_{-l_0}^{l_0}K(x-y)\theta_2(y)dy-\frac{2a_{12}\|K\|_{\infty}C_M(l-l_0)}{C_m}\theta_1(x)
+a_{11}\theta_1(x)+\lambda_0(l)\theta_1(x)\\
\leq &\ -a_{12}\int_{-l}^{l}K(x-y)\theta_2(y)dy
+a_{11}\theta_1(x)+\lambda_0(l)\theta_1(x)\\
=&\ d_1\int_{-l}^{l}J_1(x-y)\theta_1(y)dy-
d_1\theta_1(x)\\
\leq &\ d_1\int_{-l_0}^{l_0}J_1(x-y)\theta_1(y)dy-
d_1\theta_1(x)+\frac{2d_1\|J_1\|_{\infty}C_M(l-l_0)}{C_m}\theta_1(x),
\end{align*}
which is equivalent to
\begin{align*}
\big[\lambda_0(l)-\tilde{C}_1(l-l_0)\big]\theta_1
\leq d_1\int_{-l_0}^{l_0}J_1(x-y)\theta_1(y)dy-d_1\theta_1-a_{11}\theta_1+a_{12}\int_{-l_0}^{l_0}K(x-y)\theta_2(y)dy
,
\end{align*}
where
\begin{align*}
\tilde{C}_1:=\frac{2(a_{12}\|K\|_{\infty}+d_1\|J_1\|_{\infty})C_M}{C_m}.
\end{align*}

Similarly, we have
\begin{align*}
\big[\lambda_0(l)-\tilde{C}_2(l-l_0)\big]\theta_2
\leq d_2\int_{-l_0}^{l_0}J_2(x-y)\theta_2(y)dy-a_{22}\theta_2+G'(0)\theta_1
\end{align*}
with \begin{equation*}
\tilde{C}_2:=\frac{2d_2\|J_2\|_{\infty}C_M}{C_m}.
\end{equation*}
Applying Lemma \ref{lower-eigen-lemma}, we have, with $C_*:=\max\{\tilde{C}_1,\tilde{C}_2\}$,
\begin{align*}
\lambda_0(l_0)\geq\lambda_0(l)-C^*(l-l_0)>\lambda_0(l)-\epsilon
\end{align*}
provided that
\begin{align*}
l-l_0<\delta:=\frac{\epsilon}{C_*}.
\end{align*}

Finally, we consider the case when $l_0\in[l,l+\delta)$. It suffices to show that $\lambda_0(l_0)<\lambda_0(l)+\epsilon$, since
$\lambda_0(l_0)\geq\lambda_0(l)> \lambda_0(l)-\epsilon$  by the monotonicity of $\lambda_0(l)$.
Let $(\theta_1,\theta_2)$ be a  positive eigenfunction associated with
$\lambda_0(l)$.
Define
\begin{equation*}
(\hat{\theta}_1(x),\hat{\theta}_2(x)):=\begin{cases}
(\theta_1(x),\theta_2(x)),&|x|\leq l,\\
(\theta_1(l),\theta_2(l)),&x> l,\\
(\theta_1(-l),\theta_2(-l)),&x<- l,\\
\end{cases}
\end{equation*}
and
\begin{equation*}\label{hat_C_MC_m}
\hat{C}_m:=\min\{\min_{x\in[-l_0,l_0]}\hat{\theta}_1(x),\min_{x\in[-l_0,l_0]}\hat{\theta}_2(x)\},
\hat{C}_M:=\max\{\max_{x\in[-l_0,l_0]}\hat{\theta}_1(x),\max_{x\in[-l_0,l_0]}\hat{\theta}_2(x)\}.\\
\end{equation*}

Similar to the first case, for $x\in[-l,l]$, we have
\begin{equation}\label{Continuity_case2_1}
\begin{aligned}
&\displaystyle d_1\int_{-l}^{l}J_1(x-y)\theta_1(y)dy\\
=&\ d_1\int_{-l_0}^{l_0}J_1(x-y)\hat{\theta}_1(y)dy
-d_1\Big[\int^{l_0}_{l}+\int^{-l}_{-l_0}\Big]J_1(x-y)\hat{\theta}_1(y)dy\\
\geq &\ d_1\int_{-l_0}^{l_0}J_1(x-y)\hat{\theta}_1(y)dy-2d_1\|J_1\|_{\infty}\hat{C}_M(l_0-l)\\
\geq &\ d_1\int_{-l_0}^{l_0}J_1(x-y)\hat{\theta}_1(y)dy-\frac{2d_1\|J_1\|_{\infty}\hat{C}_M(l_0-l)}{\hat{C}_m}\hat{\theta}_1,
\end{aligned}
\end{equation}
and
\begin{equation}\label{Continuity_case2_2}
\begin{aligned}
&a_{12}\int_{-l}^{l}K(x-y)\theta_2(y)dy\\
=&\ a_{12}\int_{-l_0}^{l_0}K(x-y)\hat{\theta}_2(y)dy -a_{12}\Big[\int^{l_0}_{l}+\int^{-l}_{-l_0}\Big]K(x-y)\hat{\theta}_2(y)dy\\
\geq &\ a_{12}\int_{-l_0}^{l_0}K(x-y)\hat{\theta}_2(y)dy-\frac{2a_{12}\|K\|_{\infty}\hat{C}_M(l_0-l)}{\hat{C}_m}\hat{\theta}_1
\end{aligned}
\end{equation}
Therefore, for $x\in[-l,l]$, we have
\begin{equation*}
\big[\lambda_0(l)+\hat{C}_1(l_0-l)\big]\hat{\theta}_1
\geq d_1\int_{-l_0}^{l_0}J_1(x-y)\hat{\theta}_1(y)dy-d_1\hat{\theta}_1-a_{11}\hat{\theta}_1+a_{12}\int_{-l_0}^{l_0}K(x-y)
\hat{\theta}_2(y)dy
\end{equation*}
with \begin{align*}
\hat{C}_1:=\frac{2(a_{12}\|K\|_{\infty}+d_1\|J_1\|_{\infty})\hat{C}_M}{\hat{C}_m},
\end{align*}
and
 \begin{align*}
\big[\lambda_0(l)+\hat{C}_2(l_0-l)\big]\hat{\theta}_2
\geq d_2\int_{-l_0}^{l_0}J_2(x-y)\hat{\theta}_2(y)dy-a_{22}\hat{\theta}_2+G'(0)\hat{\theta}_1
\end{align*}
with \begin{align*}
\hat{C}_2:=\frac{2d_2\|J_2\|_{\infty}\hat{C}_M}{\hat{C}_m}.
\end{align*}
Moreover, we define, for $x\in \mathbb{R}$
\begin{equation*}
\begin{aligned}
&\phi_1(x):=d_1\int_{-l}^{l}J_1(x-y)\hat{\theta}_1(y)dy-d_1\hat{\theta}_1(x)-
a_{11}\hat{\theta}_1(x)+a_{12}\int_{-l}^{l}K(x-y)\hat{\theta}_2(y)dy,\\
&\phi_2(x):=d_2\int_{-l}^{l}J_1(x-y)\hat{\theta}_2(y)dy-d_2\hat{\theta}_2(x)+G'(0)\hat{\theta}_1(x)-
a_{22}\hat{\theta}_2(x).\\
\end{aligned}
\end{equation*}
By the continuity of $(\hat{\theta}_1,\hat{\theta}_2)$ in $x$, we see that $\phi_1(x)$ and $\phi_2(x)$
are continuous in $x$. Moreover, for
$x\in[-l,l]$,
\begin{equation*}
\phi_1(x)=\lambda_0(l)\theta_1(x),~~\phi_2(x)=\lambda_0(l)\theta_2(x).
\end{equation*}
Therefore there exists some $\delta_0>0$ such that the following holds for $x\in[-l-\delta_0,-l]\cup [l,l+\delta_0]$:
\begin{equation*}
|\phi_1(x)-\lambda_0(l)\hat{\theta}_1(x)|<\frac{\hat{C}_m\epsilon}{4},~~|\phi_2(x)-
\lambda_0(l)\hat{\theta}_2(x)|<\frac{\hat{C}_m\epsilon}{4}.
\end{equation*}
Thus, for $x\in[-l-\delta_0,-l]\cup [l,l+\delta_0]$, by \eqref{Continuity_case2_1} and \eqref{Continuity_case2_2}, we have
\begin{equation*}
\begin{aligned}
&\big(\lambda_0(l)+\frac{\epsilon}{4}\big)\hat{\theta}_1(x)\geq\lambda_0(l)\hat{\theta}_1(x)+\frac{\hat{C}_m\epsilon}{4}\geq  \phi_1(x)\\
=&\ d_1\int_{-l}^{l}J_1(x-y)\hat{\theta}_1(y)dy-d_1\hat{\theta}_1-
a_{11}\hat{\theta}_1+a_{12}\int_{-l}^{l}K(x-y)\hat{\theta}_2(y)dy\\
\geq
&\ d_1\int_{-l_0}^{l_0}J_1(x-y)\hat{\theta}_1(y)dy-d_1\hat{\theta}_1-
a_{11}\hat{\theta}_1+a_{12}\int_{-l_0}^{l_0}K(x-y)\hat{\theta}_2(y)dy-\hat{C}_1(l_0-l)\hat{\theta}_1,\\
\end{aligned}
\end{equation*}
which implies that for $x\in[-l-\delta_0,l+\delta_0]$,
\begin{equation*}
\begin{aligned}
&\big(\lambda_0(l)+\frac{\epsilon}{4}+\hat{C}_1(l_0-l)\big)\hat{\theta}_1\\
\geq&
d_1\int_{-l_0}^{l_0}J_1(x-y)\hat{\theta}_1(y)dy-d_1\hat{\theta}_1-
a_{11}\hat{\theta}_1+a_{12}\int_{-l_0}^{l_0}K(x-y)\hat{\theta}_2(y)dy.\\
\end{aligned}
\end{equation*}
Analogously, we have, for  $x\in[-l-\delta_0,l+\delta_0]$,
\begin{equation*}
\begin{aligned}
\big(\lambda_0(l)+\frac{\epsilon}{4}+\hat{C}_2(l_0-l)\big)\hat{\theta}_2
\geq
d_2\int_{-l_0}^{l_0}J_2(x-y)\hat{\theta}_2(y)dy-d_2\hat{\theta}_2-a_{22}\hat{\theta}_2+G'(0)\hat{\theta}_1
.\\
\end{aligned}
\end{equation*}
By applying Lemma \ref{upper-eigen-lemma}, we obtain
\begin{equation*}
\lambda_0(l_0)\leq \lambda_0(l)+\frac{\epsilon}{4}+\hat{C}_*(l_0-l) <\lambda_0(l)+\epsilon
\end{equation*}
provided that
\begin{equation*}
l_0-l<\delta:=\min\{\delta_0,\frac{3\epsilon}{4\hat{C}_*}\}\mbox{ and }\hat{C}_*:=\max\{\hat{C}_1,\hat{C}_2\}.
\end{equation*}
We have now proved the desired continuity.
\end{proof}

\begin{lemma}\label{eigen_proposition}
The principal eigenvalue $\lambda_0(l)$ of \eqref{eigenprobelm1} has the following properties:
\begin{itemize}
  \item [$(i)$] If $R_0\leq1$, then $\lambda_0(l)<0$ for every $l>0$;
   \item [$(ii)$] If $R_0>1$, then
    there exists a unique $l^*\in(0,\infty)$ such that
       $\lambda_0(l^*)=0.$
\end{itemize}

\end{lemma}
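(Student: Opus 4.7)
The plan hinges on Lemma \ref{PEV-l} (strict monotonicity and continuity of $l\mapsto\lambda_0(l)$) together with the test-function bounds in Lemmas \ref{upper-eigen-lemma} and \ref{lower-eigen-lemma}. The central object is the constant limit matrix
$$
M:=\begin{pmatrix} -a_{11} & a_{12}\\ G'(0) & -a_{22}\end{pmatrix},
$$
whose largest real eigenvalue
$$
\lambda^*:=\tfrac12\Big(-(a_{11}+a_{22})+\sqrt{(a_{11}-a_{22})^2+4a_{12}G'(0)}\Big)
$$
satisfies $\operatorname{sgn}\lambda^*=\operatorname{sgn}(R_0-1)$, with positive eigenvector $(c_1,c_2)$.

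The easy half is a universal upper bound $\lambda_0(l)<\lambda^*$ for every $l>0$: plug the constant pair $(\eta_1,\eta_2)\equiv(c_1,c_2)$ into Lemma \ref{upper-eigen-lemma} with $\bar\lambda=\lambda^*$ and use the identities $-a_{11}c_1+a_{12}c_2=\lambda^*c_1$, $G'(0)c_1-a_{22}c_2=\lambda^*c_2$. Both required inequalities collapse to statements of the form
$$
0\ge -d_ic_i\Big(1-\int_{-l}^{l}J_i(x-y)\,dy\Big)-(\text{nonneg.})\Big(1-\int_{-l}^{l}K(x-y)\,dy\Big),
$$
which are obvious pointwise and strict at $x=\pm l$ because by symmetry $\int_0^{2l}J_i\le\tfrac12<1$. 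The ``moreover'' clause of Lemma \ref{upper-eigen-lemma} then gives $\lambda_0(l)<\lambda^*$, proving (i) at once when $R_0\le 1$. For part (ii), assume $R_0>1$; to get $\lambda_0(l)<0$ for small $l$ I use the asymmetric pair $(\eta_1,\eta_2)=(1,2G'(0)/a_{22})$ in Lemma \ref{upper-eigen-lemma}. Bounding all integrals over $[-l,l]$ by $2l\|\cdot\|_\infty$, both inequalities hold with $\bar\lambda=\max\{-a_{11}+Cl,\,-a_{22}/2\}$, which is strictly negative for $l$ small.

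The main step is to show $\lambda_0(l)>0$ for large $l$. I apply Lemma \ref{lower-eigen-lemma} with the tapered test pair $(\eta_1,\eta_2)=(c_1\psi_l,c_2\psi_l)$, where $\psi_l(x):=\cos(\pi x/(2l))$ is positive on $(-l,l)$ and vanishes at $\pm l$. The eigenvector identities reduce the two required inequalities to
$$
(\lambda^*-\underline\lambda)\psi_l \;\ge\; d_1\bigl[\psi_l-J_1*\psi_l\bigr]+a_{12}(c_2/c_1)\bigl[\psi_l-K*\psi_l\bigr],\quad(\lambda^*-\underline\lambda)\psi_l\;\ge\;d_2\bigl[\psi_l-J_2*\psi_l\bigr],
$$
with convolutions over $[-l,l]$. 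The key point is that the cosine profile, extended periodically to $\mathbb R$ as $\psi_l^\ast$, satisfies $J_i*\psi_l^\ast=\widehat{J_i}(\pi/(2l))\,\psi_l^\ast$ and $K*\psi_l^\ast=\widehat{K}(\pi/(2l))\,\psi_l^\ast$, and $\widehat{J_i}(\pi/(2l)),\widehat{K}(\pi/(2l))\to 1$ as $l\to\infty$. On the bulk $[-l/2,l/2]$, where $\psi_l\ge 1/\sqrt2$, this yields $(\psi_l-J_i*\psi_l)/\psi_l=o(1)$ uniformly, the truncation error $\int_{|y|>l}J_i(x-y)\psi_l^\ast(y)\,dy$ being $o(1)$ by $\int_{|z|>l/2}J_i\to 0$; on the boundary strip the convolution actually dominates $\psi_l$ (since $\psi_l\to 0$ while $\int_{-l}^{l}J_i(x-y)\psi_l(y)\,dy>0$), so the right-hand sides are nonpositive while $(\lambda^*-\underline\lambda)\psi_l$ is nonnegative. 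Hence both inequalities are met with $\underline\lambda=\lambda^*-o(1)>0$ for $l$ large, and Lemma \ref{lower-eigen-lemma} gives $\lambda_0(l)>0$.

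Combining the small-$l$ and large-$l$ estimates with the strict monotonicity and continuity of $\lambda_0(\cdot)$ from Lemma \ref{PEV-l}, the intermediate value theorem yields a unique $l^*\in(0,\infty)$ with $\lambda_0(l^*)=0$. The main obstacle is the large-$l$ step: a single test profile must simultaneously serve three different convolution kernels $J_1,J_2,K$ and remain well-behaved up to the endpoints where it vanishes. The cosine choice is nearly an eigenfunction of any symmetric convolution at low spatial frequencies, which is what makes the uniform ratio bound $(\psi_l-J_i*\psi_l)/\psi_l=o(1)$ on the bulk possible.
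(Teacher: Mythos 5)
Your argument takes a genuinely different route from the paper's. Where the paper uses bare-hands constants for the small-$l$ bound and the tent profile $\theta_l(x)=\max\{l-|x|,0\}$ (with an explicit three-case analysis of $\int_{-l}^{l}J_i(x-y)\theta_l(y)\,dy-\theta_l(x)>-\epsilon\theta_l(x)$) for the large-$l$ bound, you organise everything around the limiting reaction matrix $M$, its leading eigenvalue $\lambda^\ast$ (whose sign tracks $R_0-1$) and positive eigenvector $(c_1,c_2)$, and the cosine profile $\psi_l=\cos(\pi x/(2l))$. The matrix viewpoint is clean and gives the universal bound $\lambda_0(l)<\lambda^\ast$ (hence all of (i)) essentially for free, and your small-$l$ test pair works. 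Continuity and strict monotonicity (Lemma \ref{PEV-l}) then reduce (ii) to the two endpoint bounds, just as in the paper.

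The gap is in the large-$l$ step, specifically the boundary-strip claim. You assert that on $[l/2,l]$ (and its mirror) the convolution dominates, i.e.\ $\int_{-l}^{l}J_i(x-y)\psi_l(y)\,dy\ge\psi_l(x)$, citing ``$\psi_l\to 0$''. That reasoning only addresses $x$ very close to $\pm l$; it is false on most of the strip. For instance, if $J_i$ is supported in $[-R,R]$ and $l/2\le x\le l-R$, there is no truncation at all and
\[
\int_{-l}^{l}J_i(x-y)\psi_l(y)\,dy=\widehat{J_i}\!\left(\tfrac{\pi}{2l}\right)\psi_l(x)<\psi_l(x),
\]
since $\widehat{J_i}(\xi)=\int J_i(z)\cos(\xi z)\,dz<1$ for $\xi\ne 0$. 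So $\psi_l-J_i*\psi_l>0$ there, contrary to your claim, and meanwhile $\psi_l(x)$ is not uniformly bounded below on $[l/2,l]$, so the bulk argument (which relies on $\psi_l\ge 1/\sqrt2$ to convert an $o(1)$ absolute truncation error into an $o(1)$ relative one) does not extend as written. What you actually need is the uniform ratio bound $\int_{-l}^{l}J_i(x-y)\psi_l(y)\,dy\ge(1-\epsilon)\psi_l(x)$ for \emph{all} $x\in(-l,l)$, and establishing this near $x=\pm l$ — where $\psi_l$ vanishes linearly and the truncation error is $O(1)$ in absolute terms — requires an explicit local computation (e.g.\ writing $\psi_l(l-s-z)=\sin(\pi(s+z)/(2l))$ and exploiting the symmetry of $J_i$ to cancel the first-order term), exactly parallel to the paper's Case 3 for $\theta_l$. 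So the conclusion is very plausibly true for the cosine profile, but the present justification does not carry it; you should either supply that boundary estimate or switch to the tent profile, whose piecewise linearity makes the symmetric-kernel cancellation immediate.
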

\begin{proof}
$(i)$ Since $R_0\leq1$, there exists some positive constant $H_1$ such that
\begin{equation*}
\frac{G'(0)}{a_{22}}\leq H_1\leq \frac{a_{11}}{a_{12}}.
\end{equation*}
It is easily checked that $(\varphi_1,\varphi_2)=(1,H_1)$ and $\bar{\lambda}=0$ satisfy \eqref{upper_eigenvalue} for any $l>0$. Moreover,
for $x\in[-l,l]$,
\begin{equation*}
d_1\int^l_{-l}J_1(x-y)dy-d_1-a_{11}+a_{12}\int^l_{-l}K(x-y)H_1dy\leq,\not\equiv -a_{11}+a_{12}H_1\leq0,
\end{equation*}
and
\begin{equation*}
d_2\int^l_{-l}J_2(x-y)H_1dy-d_2H_1-a_{22}H_1+G'(0)\leq,\not\equiv -a_{22}H_1+G'(0)\leq0.
\end{equation*}
By Lemma \ref{upper-eigen-lemma}, we have $\lambda_0(l)<0$.

 $(ii)$ By Lemma \ref{PEV-l}, it suffices to show that  there exist some $l_1<l_2$ such that $\lambda_0(l_1)<0<\lambda_0(l_2)$.
We first find some constant $H_2$ such that
\begin{equation*}
\frac{G'(0)}{a_{22}+d_2}< H_2< \infty.
\end{equation*}
 Then define
 \begin{equation*}
 \sigma_0:=H_2-\frac{G'(0)}{a_{22}+d_2}.
\end{equation*}
Next we fix $l_1>0$  small such that, for $x\in [-l_1, l_1]$,
\[
\begin{cases}
l_1<\min\Big\{\frac{d_1+a_{11}}{8d_1\|J_1\|_{\infty}},\frac{d_1+a_{11}}{8a_{12}\|K\|_{\infty}H_2},
\frac{(d_2+a_{22})\sigma_0}{4d_2\|J_2\|_{\infty}H_2}\Big\} \\
 d_1\int^{l_1}_{-l_1}J_1(x-y)dy<\frac{d_1+a_{11}}{4},\\
 a_{12}\int^{l_1}_{-l_1}K(x-y)H_2dy<\frac{d_1+a_{11}}{4},\\
 d_2\int^{l_1}_{-l_1}J_2(x-y)H_2dy<(d_2+a_{22})\frac{\sigma_0}{2}.
\end{cases}
\]
If we choose $(\varphi_1,\varphi_2)=(1,H_2)$, we see that \eqref{upper_eigenvalue} holds with $\bar{\lambda}=0$ and $l=l_1$, i.e.,
   \begin{align*}
 d_1\int^{l_1}_{-l_1}J_1(x-y)dy-(d_1+a_{11})+a_{12}\int^{l_1}_{-l_1}K(x-y)H_2dy
 <\frac{d_1+a_{11}}{2}-(d_1+a_{11})<0,
\end{align*}
 and
    \begin{align*}
 &d_2\int^{l_1}_{-l_1}J_2(x-y)H_2dy-(d_2+a_{22})H_2+G'(0)
 <(d_2+a_{22})\frac{\sigma_0}{2}+(d_2+a_{22})\Big(-H_2+\frac{G'(0)}{d_2+a_{22}}\Big)\\
=&\ (d_2+a_{22})\Big(\frac{\sigma_0}{2}-\sigma_0\Big)<0.
\end{align*}
 By applying Lemma \ref{upper-eigen-lemma}, we have $\lambda_0(l_1)<0$.

Next we will seek for a large $l_2>0$ such that $\lambda_0(l_2)>0$. To achieve this, we first define
 \begin{equation*}
\theta_{l}(x):=\max\{l-|x|,0\},~x\in \R.
\end{equation*}
We claim that for any small $\epsilon>0$,
 there exists a large enough $l_{\epsilon}>0$ such that
 \begin{equation}\label{key_l_2}
 \begin{cases}
\int^{l}_{-l}J_i(x-y) \theta_{l}(y)dy-\theta_{l}(x)>-\epsilon\theta_{l}(x)~
\mbox{ for }x\in [-l,l], l\geq l_{\epsilon},i=1,2;\\
\int^{l}_{-l}K(x-y) \theta_{l}(y)dy-\theta_{l}(x)>-\epsilon\theta_{l}(x)~
\mbox{ for }x\in [-l,l], l\geq l_{\epsilon}.
\end{cases}
\end{equation}
Before proving this claim, let us see how it can be used  to define some large $l_2$ such that $\lambda_0(l_2)>0$.
Firstly, due to $R_0>1$,
we can choose some positive constant $H_3$ such that
\begin{equation*}
\frac{a_{11}}{a_{12}}< H_3<\frac{G'(0)}{a_{22}} .
\end{equation*}
Define
\begin{equation*}
\tilde{\sigma}_0:=\min\Big\{H_3-\frac{a_{11}}{a_{12}},\frac{G'(0)}{a_{22}}-H_3\Big\}.
\end{equation*}
By \eqref{key_l_2}, there exists some large constant $l_2>0$ such that for $l\geq l_2$,
 \begin{align*}
&\int^{l}_{-l}J_i(x-y) \theta_{l}(y)dy-\theta_{l}(x)>
-\min\Big\{\frac{a_{12}\tilde{\sigma}_0}{4d_1},\frac{a_{22}\tilde{\sigma}_0}{4d_2H_3}\Big\}\theta_{l}(x),
\mbox{ for }x\in [-l,l],i=1,2;\\
&\int^{l}_{-l}K(x-y) \theta_{l}(y)dy>\theta_{l}(x)-\frac{\tilde{\sigma}_0}{4H_3}\theta_{l}(x),~
\mbox{ for }x\in [-l,l].
\end{align*}
 Then   for $x\in(-l,l)$ with $l\geq l_2$, we obtain
 \begin{align*}
&\ d_1\int^{l}_{-l}J_1(x-y) \theta_{l}(y)dy-d_1\theta_{l}(x)-a_{11}\theta_{l}(x)+
a_{12}H_3\int^{l}_{-l}K(x-y) \theta_{l}(y)dy\\
>&\ -\frac{a_{12}\tilde{\sigma}_0}{4} \theta_{l}(x)-\frac{a_{12}\tilde{\sigma}_0}{4}\theta_{l}(x)+a_{12}\theta_{l}(x)\Big(H_3-\frac{a_{11}}{a_{12}}\Big)
\\
\geq&\  a_{12}\theta_{l}(x)\big(-\frac{\tilde{\sigma}_0}{2}+\tilde{\sigma}_0\big)>0,
\end{align*}
and
 \begin{align*}
&d_2H_3\int^{l}_{-l}J_2(x-y) \theta_{l}(y)dy-d_2H_3\theta_{l}(x)-a_{22}H_3\theta_{l}(x)+
G'(0)\theta_{l}(x)\\
>&-\frac{a_{22}\tilde{\sigma}_0}{4} \theta_{l}(x)+a_{22}\theta_{l}(x)\Big(-H_3+\frac{G'(0)}{a_{22}}\Big)
\\
\geq&\ a_{22}\theta_{l}(x)\big(-\frac{\tilde{\sigma}_0}{4}+\tilde{\sigma}_0\big)>0,
\end{align*}
Hence we can apply Lemma \ref{lower-eigen-lemma} with $(\eta_1,\eta_2):=
(\theta_{l},H_3\theta_{l})$ to obtain $\lambda_0(l_2)>0$.

To complete the proof, it remains to prove that \eqref{key_l_2} holds. We only prove the first assertion
in \eqref{key_l_2}, since the argument also works  for the second one.

Since $\displaystyle\int^{\infty}_{-\infty}J_i(y)dy=1$ for $i=1,2$, for any small $\epsilon>0$, there exists some $l_0=l_0(\epsilon)>0$ such that
\begin{equation}\label{J_i}
\int^{l_0}_{-l_0}J_i(y)dy\geq 1-\epsilon/2, ~i=1,2.
\end{equation}

Now fix $l$  such that $l\gg l_0$. We discuss the following three cases separately.

\textbf{ Case 1:} $x\in[-l_0,l_0]$.

By the choice of $l$, we have
$l-x> l_0$ and $-l-x < -l_0$ for all $x\in[-l_0,l_0]$. Therefore,
\begin{align*}
&\int^{l}_{-l}J_i(x-y)\theta_l(y)dy-\theta_l(x)=\int^{l-x}_{-l-x}J_i(y)\theta_l(y+x)dy-\theta_l(x)\\
\geq&\int^{l_0}_{-l_0}J_i(y)\theta_l(y+x)dy-\theta_l(x).
\end{align*}
When $y\in[-l_0,l_0]$, $y+x\in[-2l_0,2l_0]$ and thus $\theta_l(y+x)=l-|y+x|\geq l-2l_0$. By \eqref{J_i}, we see
\begin{align*}
\int^{l_0}_{-l_0}J_i(y)\theta_l(y+x)dy-\theta_l(x)\geq (1-\epsilon/2)(l-2l_0)-l
\geq \frac{-l\epsilon/2-2l_0}{l-l_0}\theta_l(x)>-\epsilon\theta_l(x),
\end{align*}
when $l$ is sufficiently large.

\textbf{ Case 2:} $x\in[l_0,l-l_0]\cup[-l+l_0,-l_0]$.

We first prove that \eqref{key_l_2} holds for $x\in[l_0,l-l_0]$. Since $l-x\geq l_0$ and $-l-x\leq-l_0$,
we see that
\begin{align*}
&\int^{l}_{-l}J_i(x-y)\theta_l(y)dy-\theta_l(x)=\int^{l-x}_{-l-x}J_i(y)\theta_l(y+x)dy-\theta_l(x)\\
\geq&\int^{l_0}_{-l_0}J_i(y)\theta_l(y+x)dy-\theta_l(x).
\end{align*}
Note that, for $x\in[l_0,l-l_0]$ and $y\in[-l_0,l_0]$, $y+x\in[0,l]$. Thus we see $\theta_l(y+x)=l-(y+x)$,
and
\begin{align*}
&\int^{l_0}_{-l_0}J_i(y)\theta_l(y+x)dy-\theta_l(x)=\int^{l_0}_{-l_0}J_i(y)(l-(y+x))dy-(l-x)\\
\geq &-\int^{l_0}_{-l_0}J_i(y)ydy+(1-\epsilon/2)(l-x)-(l-x)= -\epsilon/2 (l-x)>-\epsilon\theta_l(x),
\end{align*}
where $\displaystyle -\int^{l_0}_{-l_0}J_i(y)ydy=0$ since $J_i(y)y$ is an odd function.
Since $J_i(x)$ and $\theta_l(x)$ are both symmetric in $\R$, \eqref{key_l_2} also holds for $x\in[-l+l_0,-l_0]$.

\textbf{Case 3:} $x\in[l-l_0,l]\cup[-l,-l+l_0]$.

Similar to \textbf{Case 2}, we only discuss $x\in[l-l_0,l]$. By $l-x\in[0,l_0]$ and $-l-x\in[-2l,-2l+l_0]$,
for $l\gg l_0$,
we have
\begin{align*}
&\int^{l}_{-l}J_i(x-y)\theta_l(y)dy-\theta_l(x)=\int^{l-x}_{-l-x}J_i(y)\theta_l(y+x)dy-\theta_l(x)\\
\geq&\int^{l_0}_{-l_0}J_i(y)\theta_l(y+x)dy-\int^{l_0}_{l-x}J_i(y)\theta_l(y+x)dy-\theta_l(x).
\end{align*}

For $x\in[l-l_0,l]$, when $y\in[-l_0,l_0]$, $y+x\geq l-2l_0$ and $\theta_l(y+x)\geq l-(y+x)$, while for
$y\in[l-x,l_0]$, $y+x\geq l$ and $\theta_l(y+x)=0$, therefore, by the above estimates,
\begin{align*}
&\int^{l}_{-l}J_i(x-y)\theta_l(y)dy-\theta_l(x)
\geq\int^{l_0}_{-l_0}J_i(y)\theta_l(y+x)dy-\theta_l(x)\\
\geq&\int^{l_0}_{-l_0}J_i(y)(l-y-x)dy-(l-x)\geq -\epsilon/2 (l-x)>-\epsilon\theta_l(x).
\end{align*}
The proof is completed.
\end{proof}
\subsection{The long-time dynamics of \eqref{model_fixbound}}
 Replacing
the time-dependent interval $[g(t),h(t)]$ with a fixed
interval $[-l,l]$,
it follows from  the proof of Lemma \ref{solution_uv} that \eqref{model_fixbound} admits a unique positive
solution for all $t>0$ when $(u_0, v_0)\in D_+\setminus\{\bf 0\}$. In what follows, we always use $(u(x,t), v(x,t))$ to denote this unique solution.

The steady state of problem \eqref{model_fixbound} satisfies
\begin{equation}\label{model_steady}
\begin{cases}\displaystyle
d_1\int_{-l}^{l}J_1(x-y)w(y)dy-(d_1+a_{11})w(x)
+a_{12}\int_{-l}^{l}K(x-y)z(y)dy=0,&x\in[-l,l],\\
\displaystyle
d_2\int_{-l}^{l}J_2(x-y)z(y)dy-(d_2+a_{22})z(x)+G(w(x))=0,&x\in[-l,l].
\end{cases}
\end{equation}

\begin{theorem}\label{0steadystate}
Suppose that $J_1, J_2$ and $K$ satisfy {\bf (J)}, and $G$ satisfies $(G1)$-$(G2)$.
Denote by $\lambda_0(l)$  the principal eigenvalue of \eqref{eigenprobelm1}.
Then the following statements are valid:
\begin{itemize}
  \item [$(i)$] If $\lambda_0(l)>0$, then problem \eqref{model_steady} admits a unique
  positive solution $(w,z)\in D$. Moreover, $(u,v)$ converges to $(w,z)$
  as $t$ goes to infinity uniformly for $x\in[-l,l]$.
   \item [$(ii)$] If $\lambda_0(l)\leq 0$, then $(0,0)$ is the only nonnegative solution of \eqref{model_steady}
and $(u,v)$ converges to $(0,0)$
  as $t$ goes to infinity uniformly for $x\in[-l,l]$.
\end{itemize}
\end{theorem}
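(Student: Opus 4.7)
The plan is to use the principal eigenpair $(\phi_1,\phi_2)\in D_+^0$ of \eqref{eigenprobelm1} furnished by Theorem \ref{PEV_exist} to build suitable sub- and super-solutions of \eqref{model_steady}, and then sandwich the time-dependent solution $(u,v)$ of \eqref{model_fixbound} between two monotone time-orbits via the comparison principle (Lemma \ref{maximalprinciple} together with Remark \ref{compariprinciple1}). Since the system is cooperative, an orbit issuing from a super-solution (resp.\ sub-solution) of the stationary problem is monotone decreasing (resp.\ increasing) in $t$ and converges to a nonnegative steady state; this reduces the dynamics to understanding the steady-state set. The delicate step will be uniqueness of the positive steady state in part (i): the nonlocal convolution $\int K(x-y)z(y)\,dy$ blocks a classical touching-at-a-minimum argument, so the strict propagation of inequalities must be read off the integral equations themselves, relying on $K(0)>0$ from $\textbf{(J)}$ in the spirit of the Touching-Lemma-type arguments used in Lemmas \ref{invert} and \ref{compactness}.

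\textbf{Part (i), $\lambda_0(l)>0$.} I first verify that $\epsilon(\phi_1,\phi_2)$ is a sub-solution of \eqref{model_steady} for small $\epsilon>0$: the first identity reduces to $\lambda_0\epsilon\phi_1>0$, while the second equals $\lambda_0\epsilon\phi_2+[G(\epsilon\phi_1)-G'(0)\epsilon\phi_1]$, which is positive for small $\epsilon$ because the bracket is $o(\epsilon)$ uniformly on $[-l,l]$ by the $C^1$-regularity of $G$ at $0$, while $\lambda_0\phi_2$ has a uniform positive lower bound. Condition $(G2)$ gives $G(M)/(Ma_{22})<a_{11}/a_{12}$ for all large $M$, so I can pick such $M$ and $H\in\bigl(G(M)/(Ma_{22}),a_{11}/a_{12}\bigr)$, and a direct computation shows that the constant pair $(M,MH)$ is a super-solution of \eqref{model_steady}. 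The corresponding monotone orbits converge to steady states $(\underline w,\underline z)\preceq(\bar w,\bar z)$ in $D_+^0$. To identify them I prove uniqueness of the positive steady state by a sliding/scaling argument: for two positive steady states $(w_i,z_i)$, set $\kappa:=\sup\{c>0:cw_2\leq w_1,\,cz_2\leq z_1\}$; if $\kappa<1$, then the strict sub-homogeneity $G(\kappa w_2)>\kappa G(w_2)$ inherited from $(G2)$ together with subtracting $\kappa$ times the $(w_2,z_2)$-equations from those for $(w_1,z_1)$ yields, pointwise,
\begin{equation*}
(d_2+a_{22})(z_1-\kappa z_2)>d_2\int_{-l}^{l}J_2(x-y)(z_1-\kappa z_2)(y)\,dy\geq 0,
\end{equation*}
forcing $z_1-\kappa z_2>0$ on $[-l,l]$; since $K(0)>0$ by $\textbf{(J)}$, the $w$-equation then gives $(d_1+a_{11})(w_1-\kappa w_2)\geq a_{12}\int K(x-y)(z_1-\kappa z_2)(y)\,dy>0$, so $w_1-\kappa w_2>0$ as well, contradicting the maximality of $\kappa$ on the compact set $[-l,l]$. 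Hence $\kappa\geq 1$, and by symmetry $(w_1,z_1)=(w_2,z_2)=:(w,z)$. For general $(u_0,v_0)\in D_+\setminus\{\mathbf{0}\}$, Lemma \ref{maximalprinciple} at some $t_0>0$ places $(u(\cdot,t_0),v(\cdot,t_0))$ between $\epsilon(\phi_1,\phi_2)$ and $(M,MH)$ for appropriate $\epsilon,M$, and the comparison principle then squeezes $(u,v)(\cdot,t+t_0)$ between the two monotone orbits, both converging to $(w,z)$.

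\textbf{Part (ii), $\lambda_0(l)\leq 0$.} If $(w,z)\in D_+\setminus\{\mathbf{0}\}$ were a nonnegative steady state, applying Lemma \ref{lower-eigen-lemma} with $\bm\eta=(w,z)$ and $\underline\lambda=0$---the second inequality reduces to $G'(0)w-G(w)\geq 0$, valid by $(G2)$---would give $\lambda_0\geq 0$, hence $\lambda_0=0$; the equality clause of that lemma then forces $G(w)\equiv G'(0)w$ on $[-l,l]$, impossible unless $w\equiv 0$ by the strict monotonicity of $G(s)/s$. Once $w\equiv 0$, integrating the $z$-equation over $[-l,l]$ and using $\int_{-l}^{l}J_2(x-y)\,dx\leq 1$ by Fubini yields $a_{22}\int_{-l}^{l}z\leq 0$, so $z\equiv 0$, a contradiction. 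For the convergence, since $\lambda_0\leq 0$ and $G(s)\leq G'(0)s$ by $(G2)$, the pair $M(\phi_1,\phi_2)$ is a super-solution of \eqref{model_steady} for every $M>0$; choosing $M$ so large that $M(\phi_1,\phi_2)\succeq(u_0,v_0)$, the orbit starting from $M(\phi_1,\phi_2)$ decreases monotonically to a nonnegative steady state, which by the uniqueness just proved must be $(0,0)$, and the comparison principle gives $(u,v)\to(0,0)$ uniformly on $[-l,l]$.
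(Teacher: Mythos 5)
Your argument is correct and follows essentially the same route as the paper: monotone iteration from the eigenpair-based lower solution $\epsilon(\phi_1,\phi_2)$ and a constant upper solution, steady-state uniqueness via a scaling argument that exploits the sub-homogeneity of $G$ from $(G2)$ together with $K(0)>0$ from $\textbf{(J)}$, and Lemma \ref{lower-eigen-lemma} for the non-existence and the forced equality $G'(0)w\equiv G(w)$ in part (ii). The only notable variation is that you establish uniqueness by a symmetric sliding argument $\kappa=\sup\{c>0: c(w_2,z_2)\preceq(w_1,z_1)\}$ between two arbitrary positive steady states, whereas the paper scales an arbitrary steady state $\zeta_0(w,z)$ to dominate the maximal one and touches at a point; both rest on the same two ingredients and are interchangeable.
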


\begin{proof}
$(i)$ By Lemma 3.9, $\lambda_0(l)>0$ implies $R_0>1$.
By (G1) and (G2) for all $M_1\gg1$ we have
\[
G(M_1)/M_1<a_{11}a_{22}/a_{12}.
\]
Let $M_2:=M_1 a_{11}/a_{12}$.
Then clearly
\begin{equation}\label{M_1M_2}
a_{12}M_2= a_{11}M_1\mbox{ and } G(M_1)\leq a_{22}M_2.
\end{equation}
This implies that $(w,z)=(M_1, M_2)$ is an upper solution of \eqref{model_steady}.

Denote by $(\hat{u},\hat{v})$ the unique positive
solution of \eqref{model_fixbound} with initial functions $(M_1,M_2)$.
By \eqref{M_1M_2} and Remark \ref{compariprinciple1}, we have
\begin{equation*}
(\hat{u}(x,t),\hat{v}(x,t))\preceq (M_1,M_2) \mbox{ for all }t>0,~x\in[-l,l].
\end{equation*}
Therefore, for any fixed $t_0>0$,
\begin{equation*}
(\hat{u}(x,t),\hat{v}(x,t))\preceq (\hat{u}(x,t-t_0),\hat{v}(x,t-t_0)) \mbox{ for }t>t_0.
\end{equation*}
This implies that $(\hat{u}(x,t),\hat{v}(x,t))$ is nonincreasing in $t$.
We next show that $(\epsilon\theta_1,\epsilon\theta_2)$ is a lower solution of problem \eqref{model_steady}
for sufficiently small $\epsilon>0$,
where $(\theta_1,\theta_2)$ is a positive eigenfunction pair of \eqref{eigenprobelm1} with $\lambda=\lambda_0(l)$.
Due to $\lambda_0(l)>0$, we have
\begin{align*}
d_1\int_{-l}^{l}J_1(x-y)\epsilon\theta_1(y)dy-d_1\epsilon\theta_1-a_{11}\epsilon\theta_1
+a_{12}\int_{-l}^{l}K(x-y)\epsilon\theta_2(y)dy=\epsilon\lambda_0(l)\theta_1>0,
\end{align*}
and
\begin{align*}
&d_2\int_{-l}^{l}J_2(x-y)\epsilon\theta_2(y)dy-d_2\epsilon\theta_2-a_{22}\epsilon\theta_2
+G(\epsilon\theta_1)\\
=&\ \epsilon\lambda_0(l)\theta_2-G'(0)\epsilon\theta_1+G(\epsilon\theta_1)
= \epsilon\lambda_0(l)\theta_2+o(\epsilon)\theta_1>0
\end{align*}
for all small $\epsilon>0$, where $o(\epsilon)/\epsilon\rightarrow 0$ as $\epsilon\rightarrow 0$.
Choosing  $\epsilon>0$ small enough such that the above inequalities hold (which means that $(\epsilon\theta_1,\epsilon\theta_2)$ is a lower solution of \eqref{model_steady}) and $(\epsilon\theta_1,\epsilon\theta_2)\llp (M_1,M_2)$.
By the comparison principle, we have
$(\hat{u}(x,t),\hat{v}(x,t))\succeq(\epsilon\theta_1,\epsilon\theta_2)$ for $t>0$ and $x\in[-l,l]$. Thus we see that
\begin{equation}\label{hatwz}
(0,0)\llp(w_1(x),z_1(x)):=\lim_{t\rightarrow\infty}(\hat{u}(x,t),\hat{v}(x,t))\mbox{ for }x\in[-l,l].
\end{equation}

Now we claim that $(w_1,z_1)$ is a positive solution pair of problem \eqref{model_steady}.
For any $t,s>0$ and $x\in[-l,l]$, we have
\begin{align*}
&\hat{u}(x,t+s)-\hat{u}(x,s)\\
=&\int^{t}_0\Big(d_1\int_{-l}^{l}J_1(x-y)\hat{u}(y,\tau+s)dy-
(d_1+a_{11})\hat{u}(x,\tau+s)
+a_{12}\int_{-l}^{l}K(x-y)\hat{v}(y,\tau+s)dy\Big)d\tau,
\end{align*}
and
\begin{align*}
&\hat{v}(x,t+s)-\hat{v}(x,s)\\
=&\int^{t}_0\Big(d_2\int_{-l}^{l}J_2(x-y)\hat{v}(y,\tau+s)dy-
(d_2+a_{22})\hat{v}(x,\tau+s)
+G(\hat{u}(x,\tau+s))\Big)d\tau.
\end{align*}
Letting $s\rightarrow \infty$ in both sides of the above equalities, we obtain,
 for $x\in[-l,l]$,
\begin{equation*}
\begin{cases}
\displaystyle0=t\Big[d_1\int_{-l}^{l}J_1(x-y)w_1(y)dy-
(d_1+a_{11})w_1(x)
+a_{12}\int_{-l}^{l}K(x-y)z_1(y)dy\Big],\\
\displaystyle0=t\Big[d_2\int_{-l}^{l}J_2(x-y)z_1(y)dy-
(d_2+a_{22})z_1(x)+
G(w_1(x))\Big].
\end{cases}
\end{equation*}
Due to the arbitrariness of $t>0$,
we see that $(w_1,z_1)$ solves \eqref{model_steady}, from which we also easily see that they are  continuous in $x\in[-l,l]$. By Dini's theorem, this in turn implies that
the monotone convergence in \eqref{hatwz} is uniform for $x\in [-l, l]$.

 Next we show that problem \eqref{model_steady} admits a unique positive solution. Let $(w,z)$ be an arbitrary positive solution of \eqref{model_steady}.
 By enlarging $M_1$ in the earlier choice if necessary, we may assume $(w,z)\preceq(M_1,M_2)$ and $(u(\cdot, 1), v(\cdot, 1))\preceq (M_1, M_2)$.
By the comparison principle, we have
$(w,z) \preceq(\hat{u},\hat{v}) $ for all $t>0$ and $x\in[-l,l]$. Thus $(w,z)\preceq (w_1,z_1)$ in $[-l,l]$.
Since $(w,z)\in D_+^0$,
\begin{equation*}
\zeta_0:=\inf\{\zeta\geq 1:\zeta(w(x),z(x))\succeq(w_1(x),z_1(x)) \mbox{ for }x\in[-l,l]\}
\end{equation*}
is well-defined and finite.
Moreover, we have $\zeta_0(w(x),z(x))\succeq (w_1(x),z_1(x))$ for $x\in[-l,l]$, and there exists some $x_0\in [-l,l]$ such that
\begin{equation*}
{\rm (i)\ } \zeta_0w(x_0)= w_1(x_0) \mbox{ or } {\rm (ii)\  }\zeta_0z(x_0)= z_1(x_0).
\end{equation*}
We claim $\zeta_0=1$. If not, $\zeta_0>1$. By assumption
$(G2)$, we see that $\zeta_0G(w)>G(\zeta_0w)$ in $[-l,l]$. If (ii) happens, then
\begin{align*}
0=&\ d_2\int^{l}_{-l}J_2(x_0-y)(\zeta_0z(y)-z_1(y))dy-(d_2+a_{22})(\zeta_0z(x_0)-z_1(x_0))\\
&+\zeta_0G(w(x_0))-G(w_1(x_0))\\
>&\ d_2\int^{l}_{-l}J_2(x_0-y)(\zeta_0z(y)-z_1(y))dy+G(\zeta_0w(x_0))-G(w_1(x_0))\geq0,
\end{align*}
which is impossible. If (i) happens,
then
\begin{align*}
0=&\ d_1\int^{l}_{-l}J_1(x_0-y)(\zeta_0w(y)-w_1(y))dy-(d_1+a_{11})(\zeta_0w(x_0)-w_1(x_0))\\
&+ a_{12}\int_{-l}^l K(x_0-y)(\zeta_0z(y)-z_1(y))dy\\
\geq &\  a_{12}\int_{-l}^l K(x_0-y)(\zeta_0z(y)-z_1(y))dy.
\end{align*}
Since $K(0)>0$ and $\zeta_0z(y)-z_1(y)\geq 0$ for all $y\in [-l,l]$, the above inequality can hold only if $\zeta_0z(y)-z_1(y)=0$ in a neighbourhood of
$x_0$ in $[-l,l]$. In particular, $\zeta_0z(x_0)-z_1(x_0)=0$ and hence (ii) holds and we obtain a contradiction as before.

 Thus we must have $\zeta_0=1$, and so $(w,z)\succeq (w_1,z_1)\succeq (w,z)$. The uniqueness is thus proved.
Therefore, problem \eqref{model_fixbound} admits a unique positive steady state.

By Remark \ref{compariprinciple1}, it is easy to check that
$u(x,1)>0,~v(x,1)>0$ for $x\in[-l,l]$. Then we can choose $\epsilon>0$ small enough such that $(\epsilon\theta_1,\epsilon\theta_2)$ is a lower solution of \eqref{model_steady} and
\begin{equation*}
u(x,1)\geq \epsilon\theta_1(x), v(x,1)\geq\epsilon\theta_2(x) \mbox{ for }x\in[-l,l].
\end{equation*}

Denote by $(\breve{u},\breve{v})$ the unique positive
solution of \eqref{model_fixbound} with initial functions $(\epsilon\theta_1,\epsilon\theta_2)$.
Then we easily see, similar to before, that
$(\breve{u}(x,t),\breve{v}(x,t))$ is nondecreasing in $t>0$ for $x\in[-l,l]$ and
\begin{equation}\label{brevewz}
(w_2(x),z_2(x))=\lim_{t\rightarrow\infty}(\breve{u}(x,t),\breve{v}(x,t))\preceq (M_1, M_2)
\end{equation}
is a positive solution of \eqref{model_steady}. Moreover the solution pair is continuous and the convergence in \eqref{brevewz} is uniform for $x\in [-l, l]$.
Since $(\epsilon\theta_1,\epsilon\theta_2)\preceq (u(\cdot,1),v(\cdot,1))\preceq(M_1,M_2)$, it follows from
the comparison principle that
\begin{equation*}
(\breve{u}(x,t),\breve{v}(x,t))\preceq(u(x,1+t),v(x,1+t))\preceq(\hat{u}(x,t),\hat{v}(x,t))
\end{equation*}
for all $t>0$ and $x\in[-l,l]$.
Since \eqref{model_steady} has a unique positive solution, we have $(w_i,z_i)=(w,z)$ for $i=1,2$, and hence it follows from \eqref{hatwz} and \eqref{brevewz}
that
\begin{equation*}
\lim_{t\rightarrow \infty}(u(x,t),v(x,t))=(w(x),z(x)) \mbox{ uniformly for }x\in[-l,l].
\end{equation*}

$(ii)$ When $\lambda_0(l)\leq0$, we first prove that problem \eqref{model_fixbound} has no positive
steady state. If it does, let $(w,z)$ be a positive solution pair of \eqref{model_steady}.
Then
\begin{equation*}
\begin{cases}\displaystyle
d_1\int_{-l}^{l}J_1(x-y)w(y)dy-d_1w-a_{11}w
+a_{12}\int_{-l}^{l}K(x-y)z(y)dy=0,&x\in[-l,l],\\
\displaystyle
d_2\int_{-l}^{l}J_2(x-y)z(y)dy-d_2z-a_{22}z+G'(0)w
>0,&x\in[-l,l],\\
\end{cases}
\end{equation*}
due to $G'(0)w>G(w)$.
By applying Lemma \ref{lower-eigen-lemma}, we see that $\lambda_0(l)>0$, which contradicts $\lambda_0(l)\leq0$.
Thus the only nonnegative steady state of \eqref{model_fixbound} is $(0,0)$.

Next we prove the second assertion. We still denote by $(\hat{u},\hat{v})$ the unique positive
solution of \eqref{model_fixbound} with initial functions $(M_1,M_2)$ as chosen earlier. Then much as before,
\begin{equation*}
\lim_{t\rightarrow \infty}(\hat{u}(x,t),\hat{v}(x,t)) =(w,z)\mbox{ uniformly for } x\in[-l,l],
\end{equation*}
with $(w,z)$ a nonnegative solution of \eqref{model_steady}.
Since  $(0,0)$
is the only nonnegative solution of \eqref{model_steady} we thus must have $(w,z)=(0,0)$. Since  $(0,0)\preceq(u,v)\preceq(\hat{u},\hat{v})$ for $t>0$ and $x\in[-l,l]$,
it follows that
\begin{equation*}
\lim_{t\rightarrow \infty}({u}(x,t),{v}(x,t))=(0,0) \mbox{ uniformly for } x\in[-l,l].
\end{equation*}
\end{proof}

\begin{lemma}\label{l_infty}
Suppose that $J_1, J_2$ and $K$ satisfy {\bf (J)},  $G$ satisfies $(G1)$-$(G2)$, and $R_0>1$. So for  $l>l^*$, $\lambda_0(l)>0$
and \eqref{model_steady} has a unique positive solution $(w_l, z_l)$. We have
\begin{equation}\label{l_convergence}
\lim_{l\rightarrow\infty} (w_l(x),z_l(x))=(u^*,v^*) \mbox{ locally uniformly in }\mathbb{R}
 \end{equation}
with $(u^*,v^*)$ defined in \eqref{u^*v^*}.
\end{lemma}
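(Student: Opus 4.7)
The plan is to prove \eqref{l_convergence} by monotonicity plus a translation-invariance argument that forces the limit to be the constant $(u^*, v^*)$. I will first verify that the constant pair $(u^*, v^*)$ is an upper solution of \eqref{model_steady} on every $[-l, l]$: since $\int_{-l}^l J_i(x-y)\,dy \leq 1$, $\int_{-l}^l K(x-y)\,dy \leq 1$, and the equilibrium identities $a_{11}u^* = a_{12}v^*$, $a_{22}v^* = G(u^*)$ hold, the stationary LHS is $\leq 0$. Running the parabolic flow \eqref{model_fixbound} from $(u^*, v^*)$ is then nonincreasing and, by the uniqueness in Theorem~\ref{0steadystate}, converges to $(w_l, z_l)$; hence $(w_l, z_l) \preceq (u^*, v^*)$. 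Next, for $0 < l_1 < l_2$, extending $(w_{l_1}, z_{l_1})$ by zero outside $[-l_1, l_1]$ produces a lower solution of \eqref{model_steady} on $[-l_2, l_2]$ (equality inside $[-l_1, l_1]$, nonnegative remainder outside), and the parabolic flow from this lower solution is nondecreasing and bounded above by $(u^*, v^*)$, so it converges to a positive steady state which by uniqueness equals $(w_{l_2}, z_{l_2})$; this gives the monotonicity $(w_l, z_l) \preceq (w_L, z_L)$ on $[-l, l]$ whenever $l \leq L$. Setting $(w_\infty, z_\infty)(x) := \lim_{l \to \infty}(w_l, z_l)(x)$, monotone convergence in each kernel integral (after zero-extending $w_l, z_l$) shows $(w_\infty, z_\infty)$ satisfies
\begin{equation*}
\begin{cases}
d_1\int_\R J_1(x-y)w_\infty(y)\,dy-(d_1+a_{11})w_\infty+a_{12}\int_\R K(x-y)z_\infty(y)\,dy=0,\\
d_2\int_\R J_2(x-y)z_\infty(y)\,dy-(d_2+a_{22})z_\infty+G(w_\infty)=0
\end{cases}
\end{equation*}
on $\R$, with $(0,0) \llp (w_\infty, z_\infty) \preceq (u^*, v^*)$; in particular $w_\infty, z_\infty$ are continuous.

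The crucial step is to show $w_\infty, z_\infty$ are constants on $\R$. For any $a \in \R$ and $l > 0$, translation invariance of the kernels $J_1, J_2, K$ makes $(w_l(\cdot - a), z_l(\cdot - a))$ the unique positive solution of \eqref{model_steady} on the shifted interval $[a - l, a + l]$. For $L \geq l + |a|$, extending this translate by zero into $[-L, L]$ once again yields a lower solution of \eqref{model_steady} on $[-L, L]$, so the preceding monotone-iteration argument gives $w_l(x - a) \leq w_L(x)$ on $[a - l, a + l]$; evaluating at $x = a$ gives $w_l(0) \leq w_L(a)$, and letting $L \to \infty$ followed by $l \to \infty$ yields $w_\infty(0) \leq w_\infty(a)$. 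The symmetric construction---extending $w_L$ from $[-L, L]$ into $[a - l, a + l]$ for $l \geq L + |a|$---gives $w_L(a) \leq w_l(0)$ and hence $w_\infty(a) \leq w_\infty(0)$. Therefore $w_\infty \equiv w_\infty(0) =: c_1$ on $\R$, and an identical argument gives $z_\infty \equiv c_2$.

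Plugging the constants into the entire-space system reduces it to $a_{11} c_1 = a_{12} c_2$ and $a_{22} c_2 = G(c_1)$, whose unique positive solution is $(c_1, c_2) = (u^*, v^*)$ by the strict monotonicity of $G(z)/z$ in $(G2)$. The pointwise monotone convergence $w_l \nearrow u^*$, $z_l \nearrow v^*$ (with continuous $w_l, z_l$ and constant limits) is locally uniform in $\R$ by Dini's theorem, establishing \eqref{l_convergence}. The main obstacle is the translation-sandwich step in the second paragraph, which relies essentially on the translation invariance of $J_1, J_2, K$ and sidesteps the need for a uniform positive lower bound on $w_\infty$ (such a bound seems hard to obtain directly since $w_l$ can be small near the endpoints of $[-l, l]$).
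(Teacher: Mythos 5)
Your proposal follows essentially the same route as the paper: establish $(w_l,z_l)\preceq (u^*,v^*)$, prove monotonicity of $(w_l,z_l)$ in $l$, pass to the pointwise limit $(w_\infty,z_\infty)$ which solves the steady-state system on all of $\R$, use translation invariance of the kernels and the monotonicity to show the limit is a constant pair, identify the constants as $(u^*,v^*)$ from the algebraic system, and upgrade to locally uniform convergence via Dini. The translation-sandwich argument in your second paragraph is exactly the paper's comparison of $(w_{l-l_0}(\cdot+x_0),z_{l-l_0}(\cdot+x_0))$ against $(w_{l-2l_0},z_{l-2l_0})$ and $(w_l,z_l)$, just with the two inclusions handled separately.

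The one place where you deviate technically is the monotonicity step. The paper compares the \emph{parabolic flows} on $[-l_1,l_1]$ and $[-l_2,l_2]$ launched from continuous, strictly positive initial data bounded by $(u^*,v^*)$, shows the larger-interval flow restricted to $[-l_1,l_1]$ is a supersolution there, and lets $t\to\infty$. You instead zero-extend the steady state $(w_{l_1},z_{l_1})$ to $[-l_2,l_2]$ and run the parabolic flow from that. Since $(w_{l_1},z_{l_1})\in D_+^0$ is strictly positive at $x=\pm l_1$ (there is no boundary condition in \eqref{model_steady}), this zero-extension is discontinuous, while the paper's existence theory (Lemma \ref{solution_uv}) is stated for continuous initial data. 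This is a small, fixable gap -- one can approximate the zero-extension from below by continuous functions, or simply replace it with a continuous positive comparison pair as the paper does -- but as written the lower-solution flow you invoke is not quite covered by the well-posedness result you have available. The same remark applies to the zero-extensions used in your translation argument.
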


\begin{proof}
Due to $R_0>1$, it follows from $(ii)$ of Proposition \ref{eigen_proposition} that, for all $l>l^*$,
$\lambda_0(l)>0$ holds. Then by Lemma \ref{0steadystate}, we see that \eqref{model_steady}
admits a unique positive solution pair $(w_l, z_l)$ for $x\in[-l,l]$ when $l>l^*$.

We first claim that  $(w_l,v_l)$ is nondecreasing in $l>l^*$.
Assume that $l^*<l_1<l_2$. For $x\in[-l_i,l_i]$, we denote by $(u_{l_i}, v_{l_i})$ the corresponding
solution of \eqref{model_fixbound} with initial functions $(u_{0i},v_{0i})\in (C([-l_i,l_i]))^2$ satisfying
\begin{equation*}
0< u_{01}(x)\leq u_{02}(x)\leq u^*, ~0< v_{01}(x)\leq v_{02}(x)\leq v^* \mbox{ for }x\in[-l_1,l_1].
 \end{equation*}
For $x\in[-l_1,l_1]$ and $t>0$,
\begin{align*}
0&=(u_{l_2})_t-d_1\int_{-l_2}^{l_2}J_1(x-y)u_{l_2}(y,t)dy+d_1u_{l_2}+a_{11}u_{l_2}
-a_{12}\int_{-l_2}^{l_2}K(x-y)v_{l_2}(y,t)dy\\
&\leq (u_{l_2})_t-d_1\int_{-l_1}^{l_1}J_1(x-y)u_{l_2}(y,t)dy+d_1u_{l_2}+a_{11}u_{l_2}
-a_{12}\int_{-l_1}^{l_1}K(x-y)v_{l_2}(y,t)dy\\
\end{align*}
and similarly,
\begin{align*}
0\leq (v_{l_2})_t-d_2\int_{-l_1}^{l_1}J_2(x-y)v_{l_2}(y,t)dy+d_2v_{l_2}+a_{22}v_{l_2}
+G(u_{l_2}).\\
\end{align*}
By the comparison principle we obtain
\begin{align*}
(u_{l_1}, v_{l_1})\preceq(u_{l_2}, v_{l_2})\preceq (u^*, v^*)\mbox { for }x\in[-l_1,l_1], ~t>0.
\end{align*}
It follows from $(ii)$ of Proposition \ref{0steadystate} that $(w_{l_1}, z_{l_1})\preceq (w_{l_2},z_{l_2})\preceq (u^*, v^*)$ for $x\in[-l_1,l_1]$,
which in particular proved the claim.

Define
\begin{equation*}
(\tilde{w}(x),\tilde{z}(x)):=\lim_{l\rightarrow \infty}(w_l(x),z_l(x)).
\end{equation*}
Then $(0,0)\llp(\tilde{w}(x),\tilde{z}(x))\preceq (u^*,v^*)$ in $\mathbb{R}$, and
by applying the dominated convergence theorem, it is easily seen that $(\tilde{w}(x),\tilde{z}(x))$
is a positive solution of \eqref{model_steady} with $[-l,l]$ replaced by $(-\infty,\infty)$.

Next we prove that $(\tilde{w}(x),\tilde{z}(x))$ is a pair of constants for all $x\in \mathbb{R}$. It suffices
to show that for any given $x_0\in \mathbb{R}$, $(\tilde{w}(x_0),\tilde{z}(x_0))=(\tilde{w}(0),\tilde{z}(0))$.
Let $l_0:=|x_0|$. For $l>l^*+2l_0$, define
\begin{equation*}
(w_l^0(x),z_l^0(x)):=(w_{l-l_0}(x+x_0),z_{l-l_0}(x+x_0)).
\end{equation*}
Due to
\begin{equation*}
[-l+2l_0,l-2l_0]\subset[-l+l_0-x_0,l-l_0-x_0]\subset[-l,l],
\end{equation*}
we can show, similar to the proof of the monotonicity claim above, that
\begin{equation*}
(w_{l-2l_0}(x),z_{l-2l_0}(x))\preceq(w_l^0(x),z_l^0(x))\preceq (w_l(x),z_l(x)).
\end{equation*}
Letting $l\rightarrow\infty$, we obtain
\begin{equation*}
(\tilde{w}(x),\tilde{z}(x))\preceq (\tilde{w}(x+x_0),\tilde{z}(x+x_0))\preceq (\tilde{w}(x),\tilde{z}(x)).
\end{equation*}
Therefore, $(\tilde{w}(x_0),\tilde{z}(x_0))=(\tilde{w}(0),\tilde{z}(0))$ by letting $x=0$.

It follows from $R_0>1$ that $(u^*,v^*)$ is the only positive constant solution of \eqref{model_steady} in $\mathbb{R}$.
This implies that $(\tilde{w}(x),\tilde{z}(x)))=(u^*,v^*)$ for $x\in\mathbb{R}$.
By Dini's theorem, the monotonic convergence of $(w_l(x), z_l(x))$ to $(u^*, v^*)$ is uniform for $x$ in any bounded interval, and \eqref{l_convergence} is proved.
\end{proof}

\section{Spreading-vanishing dichotomy and criteria}
\begin{lemma}\label{vanishing1}
If $h_{\infty}-g_{\infty}<\infty$, then $\lambda_0(g_{\infty},h_{\infty})\leq 0$ and
\begin{equation}\label{vanishing_uv}
\lim_{t\rightarrow \infty}\|u(\cdot,t)\|_{C([g(t),h(t)])}
=\lim_{t\rightarrow \infty}\|v(\cdot,t)\|_{C([g(t),h(t)])}=0,
\end{equation}
 where
$\lambda_0(g_{\infty},h_{\infty})$ is the principal eigenvalue of \eqref{eigenprobelm1} with $[-l,l]$ replaced by $[g_{\infty},h_{\infty}]$
\end{lemma}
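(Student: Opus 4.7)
The plan is to first show $\lambda_0(g_\infty, h_\infty) \leq 0$ by contradiction, using the integrability of $h'(t)$ forced by $h_\infty < \infty$, and then to derive the decay $(u,v) \to (0,0)$ from Theorem \ref{0steadystate}(ii) via a comparison with the fixed-boundary problem on $[g_\infty, h_\infty]$. Throughout, I will use that, by translation invariance of the convolution integrals, the principal eigenvalue on any interval $[a,b]$ depends only on its length, so Lemma \ref{PEV-l}, Lemma \ref{eigen_proposition} and Theorem \ref{0steadystate} apply directly to the asymmetric interval $[g_\infty, h_\infty]$.

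\emph{Ruling out $\lambda_0(g_\infty, h_\infty) > 0$.} Suppose for contradiction that $\lambda_0(g_\infty, h_\infty) > 0$. By the continuity of $\lambda_0$ (Lemma \ref{PEV-l}), I fix a small $\epsilon>0$ so that $\lambda_0(g_\infty+\epsilon, h_\infty-\epsilon)>0$, and Theorem \ref{0steadystate}(i) furnishes a unique positive steady state $(w_\epsilon, z_\epsilon)$ of the fixed-boundary problem on $I_\epsilon := [g_\infty+\epsilon,\, h_\infty-\epsilon]$. Pick $T>0$ with $I_\epsilon \subset (g(t),h(t))$ for all $t\geq T$. Because $J_1, K \geq 0$ and $[g(t),h(t)] \supset I_\epsilon$, the restriction of $(u,v)$ to $I_\epsilon$ is a super-solution of the fixed-boundary problem on $I_\epsilon$ for $t\geq T$, and Lemma \ref{maximalprinciple} guarantees $(u(\cdot,T),v(\cdot,T)) \ggs (0,0)$ on $I_\epsilon$. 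Comparison with the fixed-boundary solution starting from this strictly positive data then yields, uniformly in $x \in I_\epsilon$,
\[
\liminf_{t\to\infty} u(x,t) \geq w_\epsilon(x), \qquad \liminf_{t\to\infty} v(x,t) \geq z_\epsilon(x).
\]

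\emph{Contradiction via the boundary velocity.} By \textbf{(J)} there are $c_0,\delta_0>0$ with $J_1(s)\geq c_0$ for $|s|\leq \delta_0$; I shrink $\delta_0$ so that $\delta_0 < h_\infty - g_\infty$ and take $\epsilon < \delta_0/4$. For $x\in [h(t)-\delta_0/2,\, h(t)]$, a direct computation using the symmetry of $J_1$ gives $\int_{h(t)}^\infty J_1(x-y)\,dy \geq c_0 \delta_0/2$. For all large $t$, $h(t)\geq h_\infty-\epsilon$ and $[h(t)-\delta_0/2,\, h(t)-\epsilon] \subset I_\epsilon$, on which $u(\cdot,t) \geq \tfrac12\min_{I_\epsilon} w_\epsilon =: c > 0$. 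Dropping the nonnegative $J_2$-term, the boundary condition for $h$ in \eqref{model*} then forces
\[
h'(t) \geq \mu \int_{h(t)-\delta_0/2}^{h(t)-\epsilon} u(x,t) \int_{h(t)}^\infty J_1(x-y)\,dy\,dx \;\geq\; \mu c \cdot \tfrac{c_0\delta_0}{2} \cdot \Big(\tfrac{\delta_0}{2}-\epsilon\Big) \;=:\; C > 0
\]
for all large $t$, contradicting $\int_0^\infty h'(t)\,dt = h_\infty - h_0 < \infty$. Hence $\lambda_0(g_\infty, h_\infty) \leq 0$.

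\emph{Decay of $(u,v)$.} To prove \eqref{vanishing_uv}, I extend $(u,v)$ by zero to $[g_\infty,h_\infty]$, obtaining $(\tilde u, \tilde v)$. A direct computation shows that $(\tilde u, \tilde v)$ is a sub-solution of the fixed-boundary problem \eqref{model_fixbound} on $[g_\infty, h_\infty]$ with initial data $(u_0,v_0)$ extended by zero; at the sweep times when the free boundary first reaches a given $x$, the extensions are only Lipschitz in $t$, but both one-sided time derivatives are nonnegative there, so the sub-solution inequality holds a.e.\ in $t$, which is sufficient for comparison. A comparison argument analogous to Lemma \ref{maximalprinciple} applied to the differences with the fixed-boundary solution $(\hat u, \hat v)$ then yields $\tilde u \leq \hat u$ and $\tilde v \leq \hat v$ on $[g_\infty,h_\infty]$ for $t>0$. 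Since $\lambda_0(g_\infty, h_\infty) \leq 0$, Theorem \ref{0steadystate}(ii) delivers $(\hat u, \hat v) \to (0,0)$ uniformly on $[g_\infty, h_\infty]$, which proves \eqref{vanishing_uv}. The hardest part is the low time-regularity in the last step: verifying the sub-solution comparison rigorously despite $(\tilde u, \tilde v)$ not being $C^1$ in $t$ at the crossing times; the rest reduces to careful application of the already-developed fixed-boundary machinery of Section 3.
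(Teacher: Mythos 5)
Your argument for $\lambda_0(g_\infty,h_\infty)\le 0$ is essentially the paper's: fix a subinterval eventually contained in the moving domain on which the principal eigenvalue is positive, compare with the fixed-boundary solution there (whose solution converges to the positive steady state by Theorem \ref{0steadystate}(i)), conclude $(u,v)$ is bounded below by a positive function for large $t$, and use the boundary condition to force $h'(t)\ge C>0$, contradicting $h_\infty<\infty$. The only cosmetic difference is that you shrink $[g_\infty,h_\infty]$ by $\epsilon$ while the paper uses $[g(T),h(T)]$ for large $T$; both are fine, and your remark that translation invariance of the kernels lets the symmetric-interval results apply to $[g_\infty,h_\infty]$ is correct and implicitly used in the paper as well.

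For the decay \eqref{vanishing_uv}, however, your route has a genuine gap that you yourself flag but do not close. You extend $(u,v)$ by zero to $[g_\infty,h_\infty]\times(0,\infty)$ and want to compare the extension with the fixed-boundary solution, but the zero-extension fails to be $C^1$ in $t$ at the sweep times $t_x$, while Lemma \ref{maximalprinciple} as stated in the paper explicitly requires $(u_t,v_t)\in C(\overline{\Omega}_T)$. Your assertion that the a.e.\ sub-solution inequality ``is sufficient for comparison'' is not justified by any lemma in the paper, and verifying it would require re-proving a comparison principle under weaker time regularity — extra work you have not supplied. The paper sidesteps the issue entirely: it lets $(\hat u_1,\hat v_1)$ solve the fixed-boundary problem on $[g_\infty,h_\infty]$ with constant initial data $(\|u_0\|_\infty,\|v_0\|_\infty)$, and then applies the comparison principle to the difference $(\hat u_1 - u,\ \hat v_1 - v)$ on the \emph{moving} domain $\Omega_T(g,h)$, where all functions involved are $C^1$ in $t$ and the extra nonnegative integral contributions from $[g_\infty,h_\infty]\setminus[g(t),h(t)]$ carry the correct sign. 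This gives $(u,v)\preceq(\hat u_1,\hat v_1)$ directly on $[g(t),h(t)]$, and Theorem \ref{0steadystate}(ii) then finishes the proof with no regularity headaches. You should replace your zero-extension step by this cleaner comparison on the moving domain.
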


\begin{proof}
We first prove $\lambda_0(g_{\infty},h_{\infty})\leq 0$ if $h_{\infty}-g_{\infty}<\infty$. Otherwise, $\lambda_0(g_{\infty},h_{\infty})>0$.
It follows from Lemma \ref{PEV-l} that there exists a large constant $T_1$ such that $\lambda_0(g(T),h(T))>0$ for $T\geq T_1$.
Since $J_i(0)>0$, there exist a sufficiently small constant $\varepsilon>0$ and some positive constant
 $\delta>0$ such that, for $i=1,2$,
\begin{equation*}
J_i(x)\geq\delta>0  \mbox{ for }x\in[-4\varepsilon,4\varepsilon].
\end{equation*}
Fix $T>T_1$ so that
\[
[g(T), h(T)]\supset [g_\infty+\varepsilon, h_\infty-\varepsilon],
\]
and let $(\tilde{u}(t,x),\tilde{v}(t,x))$ be the solution of problem \eqref{model_fixbound} with $[-l, l]=[g(T),h(T)]$
and  initial
functions $(u(T,x),v(T,x))$.
Define \[\tilde{U}(x,t):=u(x,t+T)-\tilde{u}(x,t),~~\tilde{V}(x,t):=v(x,t+T)-\tilde{v}(x,t)\] for $t>0$ and $x\in[g(T),h(T)]$.
Then for such $t$ and $x$,
\begin{align*}
&d_1\int_{g(t+T)}^{h(t+T)}J_1(x-y)u(y,t+T)dy-d_1\int_{g(T)}^{h(T)}J_1(x-y)\tilde{u}(y,t)dy\\
\geq&\ d_1\int_{g(T)}^{h(T)}J_1(x-y)u(y,t+T)dy-d_1\int_{g(T)}^{h(T)}J_1(x-y)\tilde{u}(y,t)dy
=d_1\int_{g(T)}^{h(T)}J_1(x-y)\tilde{U}(y,t)dy,
\end{align*}
and similarly
\begin{align*}
&d_2\int_{g(t+T)}^{h(t+T)}J_2(x-y)v(y,t+T)dy-d_2\int_{g(T)}^{h(T)}J_2(x-y)\tilde{v}(y,t)dy
\geq d_2\int_{g(T)}^{h(T)}J_2(x-y)\tilde{V}(y,t)dy,\\
&a_{12}\int_{g(t+T)}^{h(t+T)}K(x-y)v(y,t+T)dy-a_{12}\int_{g(T)}^{h(T)}K(x-y)\tilde{v}(y,t)dy
\geq a_{12}\int_{g(T)}^{h(T)}K(x-y)\tilde{V}(y,t)dy.\\
\end{align*}
Since $\tilde{U}(x,0)=\tilde{V}(x,0)=0$,
by applying Remark \ref{compariprinciple1}, we see that
\begin{equation}\label{compare_inequa}
(\tilde{u}(x,t),\tilde{v}(x,t))\preceq (u(x,t+T),v(x,t+T)) \mbox{ for }x\in[g(T),h(T)] \mbox{ and }t>0.
\end{equation}
By $(i)$ of Proposition \ref{0steadystate},
\begin{equation*}
\lim_{t\rightarrow \infty}(\tilde{u}(x,t),\tilde{v}(x,t))=:(w(x),z(x)) \mbox{ uniformly for }x\in[g(T),h(T)],
\end{equation*}
where $(w(x),z(x))$ is the unique positive solution of \eqref{model_steady} with $[-l, l]$ replaced by $[g(T), h(T)]$.
Therefore, we can find some $T_0>T$ such that
\begin{equation*}
(0,0)\llp\frac{1}{2}(w(x),z(x))\preceq(u(x,t),v(x,t)) \mbox{ for }x\in[g(T),h(T)] \mbox{ and }t>T_0.
\end{equation*}
Hence, for $t>T_0$,
\begin{equation}\label{primeh}
\begin{aligned}
h'(t)=&\ \mu \int_{g(t)}^{h(t)}\int^{\infty}_{h(t)}J_1(x-y)u(x,t)dydx
+\mu\rho \int_{g(t)}^{h(t)}\int^{\infty}_{h(t)}J_2(x-y)v(x,t)dydx\\
\geq&\ \mu \int_{h(t)-2\varepsilon}^{h(t)}\int^{h(t)+2\varepsilon}_{h(t)}J_1(x-y)u(x,t)dydx
+\mu\rho \int_{h(t)-2\varepsilon}^{h(t)}\int^{h(t)+2\varepsilon}_{h(t)}J_2(x-y)v(x,t)dydx\\
\geq &\ 2\mu\delta\varepsilon\int_{h(t)-2\varepsilon}^{h(t)}u(x,t)dx+2\mu\rho\delta\varepsilon
\int_{h(t)-2\varepsilon}^{h(t)}v(x,t)dx\\
\geq&\ 2\mu\delta\varepsilon\int_{h(T)-\varepsilon}^{h(T)}u(x,t)dx+2\mu\rho\delta\varepsilon
\int_{h(T)-\varepsilon}^{h(T)}v(x,t)dx\\
\geq&\
2\mu\delta\varepsilon^2m_2(1+\rho)>0,
\end{aligned}\end{equation}
where $m_2:=\min_{x\in[g(T),h(T)]}\min\{w(x),z(x)\}$.
This is a contradiction with $h_{\infty}<\infty$. We have thus proved $\lambda_0(g_\infty, h_\infty)\leq 0$.

Denote by $(\hat{u}_1(x,t),\hat{v}_1(x,t))$ the solution of problem \eqref{model_fixbound} for
$x\in[g_{\infty},h_{\infty}]$ and $t>0$ with initial
functions $(\|u_0\|_\infty,\|v_0\|_\infty)$. By Lemma 2.1, we have
\begin{equation*}
(0,0)\preceq (u(x,t),v(x,t))\preceq (\hat{u}_1(x,t),\hat{v}_1(x,t)) \mbox{ for }x\in[g(t),h(t)],~t>0.
\end{equation*}
By $(ii)$ of Proposition \ref{0steadystate}, we have
\begin{equation*}
\lim_{t\rightarrow \infty}(\hat{u}_1(x,t),\hat{v}_1(x,t))=(0,0)\mbox{ uniformly for }x\in[g_{\infty},h_{\infty}],
\end{equation*}
and hence \eqref{vanishing_uv} holds.
\end{proof}

\begin{lemma}\label{vanishing2}
If $R_0\leq 1$, then vanishing happens.
\end{lemma}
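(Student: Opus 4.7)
The plan is to reduce Lemma \ref{vanishing2} to Lemma \ref{vanishing1} by showing that $h_\infty - g_\infty < \infty$ whenever $R_0 \leq 1$; once this finiteness is established, Lemma \ref{vanishing1} immediately delivers both parts of vanishing.

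The main device is comparison with the Cauchy problem on all of $\R$. Extending $(u(\cdot,t),v(\cdot,t))$ by zero outside $[g(t),h(t)]$ turns it into a sub-solution of
\begin{equation*}
U_t = d_1 J_1 * U - d_1 U - a_{11} U + a_{12} K * V, \qquad V_t = d_2 J_2 * V - d_2 V - a_{22} V + G(U)
\end{equation*}
on $\R\times(0,\infty)$, with initial data $(u_0,v_0)$ extended by zero. A straightforward adaptation of Lemma \ref{maximalprinciple} to the whole line (the system is cooperative since $a_{12},G'(0)\geq 0$) then yields $u \leq U$ and $v \leq V$ in $[g(t),h(t)]$. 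Integrating over $\R$ and using Fubini together with $\int_\R J_i = \int_\R K = 1$, the total masses $\bar U(t) := \int_\R U$ and $\bar V(t) := \int_\R V$ satisfy
\begin{equation*}
\bar U'(t) = -a_{11}\bar U + a_{12}\bar V, \qquad \bar V'(t) \leq -a_{22}\bar V + G'(0)\bar U,
\end{equation*}
where the last inequality uses $G(s) \leq G'(0)s$ from $(G2)$. The dominant eigenvalue of the linear cooperative majorant is $\lambda_+ = \tfrac{1}{2}\bigl[-(a_{11}+a_{22}) + \sqrt{(a_{11}-a_{22})^2 + 4a_{12}G'(0)}\bigr]$, which is $\leq 0$ precisely when $R_0 \leq 1$ and strictly negative when $R_0 < 1$.

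When $R_0 < 1$, ODE comparison gives $\bar U(t) + \bar V(t) \leq C e^{-\sigma t}$ for some $\sigma > 0$. Using the trivial bound $\int_{h(t)}^\infty J_1(x-y)\,dy \leq 1$ (and the analogue for $J_2$), the free boundary speed obeys
\begin{equation*}
0 \leq h'(t) \leq \mu \int_{g(t)}^{h(t)} u\,dx + \mu\rho \int_{g(t)}^{h(t)} v\,dx \leq \mu\bar U(t) + \mu\rho\bar V(t) \leq C' e^{-\sigma t},
\end{equation*}
with a symmetric estimate for $-g'(t)$; integrating in $t$ yields $h_\infty,-g_\infty < \infty$, and Lemma \ref{vanishing1} concludes the proof.

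The main obstacle is the borderline case $R_0 = 1$, where $\lambda_+ = 0$ and the linear bound only gives boundedness of $\bar U,\bar V$. Here I would extract the missing decay from the strict sublinearity $G(s)<G'(0)s$ in $(G2)$ via the Lyapunov functional $\bar L(t) := \bar U(t) + (a_{12}/a_{22})\bar V(t)$. Using the identity $a_{11}a_{22} = a_{12}G'(0)$ that characterises $R_0 = 1$, a short computation gives
\begin{equation*}
\bar L'(t) = -\frac{a_{12}}{a_{22}}\int_\R \bigl[G'(0)U(x,t) - G(U(x,t))\bigr]\,dx \leq 0,
\end{equation*}
so $\bar L$ is non-increasing and $\int_0^\infty \int_\R [G'(0)U - G(U)]\,dx\,dt < \infty$. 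To close the argument I would proceed by contradiction: supposing $h_\infty - g_\infty = \infty$, the uniform $L^\infty$ bound inherited from the ODE super-solution together with a compactness argument in the spirit of Theorem \ref{0steadystate} allows extraction of a subsequential limit of appropriate space-time translates of $(u,v)$, producing a bounded nonnegative entire solution of the full-line system; since the only nonnegative constant steady state when $R_0 \leq 1$ is $(0,0)$, this limit must be trivial, contradicting the persistence of outward boundary motion implied by $h_\infty = \infty$. This forces $h_\infty - g_\infty < \infty$ and the proof again reduces to Lemma \ref{vanishing1}.
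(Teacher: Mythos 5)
Your approach is genuinely different from the paper's, and it works cleanly for $R_0<1$ but has a real gap at the borderline case $R_0=1$.

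For $R_0<1$ your argument is fine: extending by zero gives a subsolution of the whole-line Cauchy problem, Fubini kills the diffusion terms in the total-mass ODEs, and the negative dominant eigenvalue of the cooperative $2\times 2$ matrix gives exponential decay of $\bar U+\bar V$; bounding $h'(t)\le\mu\bar U+\mu\rho\bar V$ then makes $h'$ integrable. This is a perfectly good, if somewhat roundabout, way to get $h_\infty-g_\infty<\infty$ when $R_0<1$.

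The gap is at $R_0=1$. Your Lyapunov computation correctly gives $\bar L'(t)=-\tfrac{a_{12}}{a_{22}}\int_\R[G'(0)U-G(U)]\,dx\le 0$, hence $\bar L$ is bounded and $\int_0^\infty\int_\R[G'(0)U-G(U)]\,dx\,dt<\infty$. But boundedness of $\bar L$ does not give decay of $h'(t)$: the integrability of $h'$ is exactly what needs to be shown, and your estimate only yields $h'(t)\le C$. The contradiction argument you sketch does not close this. A subsequential limit of space-time translates of $(u,v)$ would produce a bounded nonnegative entire solution, but there is no reason such a limit must be a \emph{constant}, so appealing to the uniqueness of the nonnegative constant steady state is unjustified. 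And even if you could show the limit is $(0,0)$, that would not contradict $h_\infty=\infty$: in case (i) of Theorem \ref{svd} the solution tends to $(0,0)$ while the front still moves (just to a finite limit), so vanishing of the densities on its own does not bound the front. In short, the final paragraph of your proposal asserts a contradiction that is not actually derived.

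What the paper does instead, and what repairs the borderline case, is to compute $\tfrac{d}{dt}\int_{g(t)}^{h(t)}\bigl[u+\tfrac{a_{12}}{a_{22}}v\bigr]\,dx$ directly on the \emph{finite} interval. When the nonlocal diffusion terms are integrated over $[g(t),h(t)]$ and split into an interior part (which vanishes by symmetry of $J_i$) and a boundary-loss part, that boundary-loss part is, up to the constant $m_0d_1/\mu$, exactly $-[h'(t)-g'(t)]$ by the free boundary conditions. The $K$-term contributes a nonpositive quantity and the reaction term is nonpositive whenever $R_0\le 1$, so the moment inequality reads
$\tfrac{d}{dt}\int_{g(t)}^{h(t)}\bigl[u+\tfrac{a_{12}}{a_{22}}v\bigr]\,dx\le-\tfrac{m_0d_1}{\mu}\bigl[h'(t)-g'(t)\bigr]$
and a single integration bounds $h(t)-g(t)$ uniformly in $t$, with no distinction between $R_0<1$ and $R_0=1$. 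The structural point your whole-line reduction loses is precisely that the boundary flux of the diffusion operators over the \emph{free} interval \emph{is} $h'-g'$; once you integrate over all of $\R$ that term disappears, and at $R_0=1$ nothing else dissipates fast enough.
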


\begin{proof}
In view of Lemma \ref{vanishing1}, it suffices to show $h_{\infty}-g_{\infty}<\infty$.
We calculate
\begin{equation}\begin{aligned}\label{vanishing_eq1}
&\frac{d}{dt}\int_{g(t)}^{h(t)}\Big[u(x,t)+\frac{a_{12}}{a_{22}}v(x,t)\Big]dx\\
=&\int_{g(t)}^{h(t)}\Big[u_t(x,t)+\frac{a_{12}}{a_{22}}v_t(x,t)\Big]dx
+h'(t)\times 0-g'(t)\times 0\\
=&\int_{g(t)}^{h(t)}\bigg[d_1\int_{g(t)}^{h(t)}J_1(x-y)u(y,t)dy-(d_1+a_{11})u(x,t)
+a_{12}\int_{g(t)}^{h(t)}K(x-y)v(y,t)dy\bigg]dx\\
&+\int_{g(t)}^{h(t)}\frac{a_{12}}{a_{22}}\bigg[d_2\int_{g(t)}^{h(t
)}J_2(x-y)v(y,t)dy-(d_2+a_{22})v(x,t)+G(u(x,t))\bigg]dx\\
=&\int_{g(t)}^{h(t)}\bigg[d_1\int_{g(t)}^{h(t)}J_1(x-y)u(y,t)dy-d_1u(x,t)\bigg]dx\\
&+\int_{g(t)}^{h(t)}\frac{a_{12}}
{a_{22}}\bigg[d_2\int_{g(t)}^{h(t)}J_2(x-y)v(y,t)dy-d_2v(x,t)\bigg]dx\\
&+\int_{g(t)}^{h(t)}\bigg[a_{12}\int_{g(t)}^{h(t)}K(x-y)v(y,t)dy-a_{12}v(x,t)\Big]dx\\
&+\int_{g(t)}^{h(t)}
\big[-a_{11}u(x,t)+\frac{a_{12}}{a_{22}}G(u(x,t))\big]dx.
\end{aligned}\end{equation}
By assumption {\bf (J)} and the third and fourth equations in \eqref{model*}, we have
\begin{align*}
&d_1\int_{g(t)}^{h(t)}\bigg[\int_{g(t)}^{h(t)}J_1(x-y)u(y,t)dy-u(x,t)\bigg]dx\\
&+\frac{d_2a_{12}}{a_{22}}\int_{g(t)}^{h(t)}\bigg[\int_{g(t)}^{h(t)}J_2(x-y)v(y,t)dy-v(x,t)\bigg]dx\\
=&d_1\int_{g(t)}^{h(t)}\int_{g(t)}^{h(t)}J_1(x-y)\big[u(y,t)-u(x,t)\big]dydx\\
&+\frac{d_2a_{12}}{a_{22}}\int_{g(t)}^{h(t)}\int_{g(t)}^{h(t)}J_2(x-y)\big[v(y,t)-v(x,t)\big]dydx\\
&-d_1\int_{g(t)}^{h(t)}\int^{\infty}_{h(t)}J_1(x-y)u(x,t)dydx-\frac{d_2a_{12}}{a_{22}}\int_{g(t)}^{h(t)}\int^{\infty}_{h(t)}J_2(x-y)v(x,t)dydx\\
&- d_1\int_{g(t)}^{h(t)}\int_{-\infty}^{g(t)}J_1(x-y)u(x,t)dydx-\frac{d_2a_{12}}{a_{22}}\int_{g(t)}^{h(t)}\int_{-\infty}^{g(t)}J_2(x-y)v(x,t)dydx\\
\leq&-\frac{m_0d_1}{\mu}\big[h'(t)-g'(t)\big],
\end{align*}
where $m_0:=\min\{1,\frac{d_2a_{12}}{\rho d_1a_{22}}\}$, and we have used
\begin{align*}
\int_{g(t)}^{h(t)}\int_{g(t)}^{h(t)}J_i(x-y)\big[w(y)-w(x)\big]dydx=0 \mbox{ for }i=1,2,
\end{align*}
for any continuous function $w(x)$.
Since $v(x,t)\geq 0$ for $x\in[g(t),h(t)]$ and $K(x)\geq0$ for $x\in \mathbb{R}$, we see that
\begin{align*}
&\int_{g(t)}^{h(t)}\Big[\int_{g(t)}^{h(t)}K(x-y)v(y,t)dy-v(x,t)\Big]dx\\
=&\int_{g(t)}^{h(t)}\int_{g(t)}^{h(t)}K(x-y)\big[v(y,t)-v(x,t)\big]dydx\\
&-\int_{g(t)}^{h(t)}\int_{-\infty}^{g(t)}K(x-y)v(x,t)dydx-\int_{g(t)}^{h(t)}\int^{\infty}_{h(t)}K(x-y)v(x,t)dydx\\
\leq &\ 0.
\end{align*}
Due to $G(u)<G'(0)u$ for all $u>0$, and $R_0\leq1$, we have
\begin{align*}
\int_{g(t)}^{h(t)}\Big[-a_{11}u(x,t)+\frac{a_{12}}{a_{22}}G(u(x,t))\Big]dx
<\int_{g(t)}^{h(t)}\Big[-a_{11}u(x,t)+\frac{a_{12}G'(0)}{a_{22}}u(x,t)\Big]dx
\leq 0.
\end{align*}
Therefore, by \eqref{vanishing_eq1},
\begin{align*}
\frac{d}{dt}\int_{g(t)}^{h(t)}\big[u(x,t)+\frac{a_{12}}{a_{22}}v(x,t)\big]dx
< -\frac{m_0d_1}{\mu}\big[h'(t)-g'(t)\big].
\end{align*}
It follows that
\begin{align*}
\frac{m_0d_1}{\mu}\big[h(t)-g(t)\big]< \int_{-h_0}^{h_0}\big[u_0(x)+\frac{a_{12}}{a_{22}}v_0(x)\big]dx
+\frac{2m_0d_1}{\mu}h_0\mbox{ for all }t\geq0.
\end{align*}
Therefore, $h_{\infty}-g_{\infty}<\infty$ and vanishing happens.
\end{proof}

\begin{lemma}\label{vanishing3}
If $R_0>1$ and $h_0<l^*$, 
then vanishing happens if the initial values $(u_0,v_0)$ of problem \eqref{model*}
is suitably small.
\end{lemma}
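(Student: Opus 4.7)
The plan is to build an upper solution whose free boundaries are trapped in $[-l_1,l_1]$ for some $l_1<l^*$, so that $h_\infty-g_\infty\leq 2l_1<\infty$, and then invoke Lemma \ref{vanishing1} to deduce vanishing. Since $h_0<l^*$, and by Lemmas \ref{PEV-l} and \ref{eigen_proposition} $\lambda_0(l)$ is continuous, strictly increasing, and vanishes at $l=l^*$, I can fix $l_1\in(h_0,l^*)$ with $\lambda_0(l_1)<0$. Let $(\phi_1,\phi_2)\in D_+^0$ denote a positive eigenfunction pair of \eqref{eigenprobelm1} associated with $\lambda_0(l_1)$ on $[-l_1,l_1]$, and pick $\gamma\in(0,-\lambda_0(l_1)]$.

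The candidate upper solution is
\begin{equation*}
\bar h(t)=l_1-(l_1-h_0)e^{-\gamma t},\quad \bar g(t)=-\bar h(t),\quad \bar u(x,t)=Me^{-\gamma t}\phi_1(x),\quad \bar v(x,t)=Me^{-\gamma t}\phi_2(x),
\end{equation*}
where $M>0$ will be fixed below. By construction $[\bar g(t),\bar h(t)]\subset[-l_1,l_1]$ for all $t\geq 0$, $\bar h$ is strictly increasing, $\bar g$ strictly decreasing, and $\bar h(0)=h_0$, $\bar g(0)=-h_0$.

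To verify the two PDE inequalities in \eqref{model_upper}, I would use the nonnegativity of $J_i, K, \phi_1,\phi_2$ to drop the convolution integrals from $[-l_1,l_1]$ down to $[\bar g(t),\bar h(t)]$, replace $G(\bar u)$ by the upper bound $G'(0)\bar u$ (which holds because $(G2)$ forces $G(z)\leq G'(0)z$), and then use the eigenvalue identity to get the right-hand sides bounded by $\lambda_0(l_1)\bar u$ and $\lambda_0(l_1)\bar v$ respectively; the choice $\gamma\leq-\lambda_0(l_1)$ then yields $\bar u_t\geq\lambda_0(l_1)\bar u\geq$ (RHS) and similarly for $\bar v$. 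For the boundary-motion inequalities, the RHS of the equations for $\bar h'$ and $\bar g'$ is controlled by
\begin{equation*}
\mu(1+\rho)\,2l_1\,M e^{-\gamma t}\bigl(\|\phi_1\|_\infty+\|\phi_2\|_\infty\bigr)=:C_1Me^{-\gamma t},
\end{equation*}
while $\bar h'(t)=\gamma(l_1-h_0)e^{-\gamma t}=-\bar g'(t)$, so it suffices to require $M\leq\gamma(l_1-h_0)/C_1$. Finally, since $\phi_1,\phi_2$ are continuous and strictly positive on $[-l_1,l_1]$, the initial inequalities $\bar u(x,0)=M\phi_1(x)\geq u_0(x)$ and $\bar v(x,0)=M\phi_2(x)\geq v_0(x)$ on $[-h_0,h_0]$ hold whenever
\begin{equation*}
\|u_0\|_\infty\leq M\min_{[-h_0,h_0]}\phi_1,\qquad \|v_0\|_\infty\leq M\min_{[-h_0,h_0]}\phi_2,
\end{equation*}
which is exactly the ``suitably small'' hypothesis.

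With $(\bar u,\bar v,\bar g,\bar h)$ a valid upper solution, Lemma \ref{compariprinciple} gives $g(t)\geq\bar g(t)>-l_1$ and $h(t)\leq\bar h(t)<l_1$ for all $t>0$, hence $h_\infty-g_\infty\leq 2l_1<\infty$, and Lemma \ref{vanishing1} concludes that vanishing occurs. The only delicate point is that the smallness of $M$ needed for the free-boundary inequalities and the smallness of $(u_0,v_0)$ needed to dominate it by $M(\phi_1,\phi_2)$ have to be made compatible; fixing $M=\gamma(l_1-h_0)/C_1$ first, and then imposing the initial-data smallness, circumvents this trivially.
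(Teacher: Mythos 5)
Your proposal is correct and follows essentially the same approach as the paper: fix $l_1\in(h_0,l^*)$ with $\lambda_0(l_1)<0$, build the decaying upper solution $\bigl(Me^{-\gamma t}\phi_1, Me^{-\gamma t}\phi_2, -\bar h, \bar h\bigr)$ with $\bar h(t)=l_1-(l_1-h_0)e^{-\gamma t}$, verify the PDE inequalities via the eigenvalue identity and $G(z)\leq G'(0)z$, and verify the boundary inequalities by choosing $M$ small, then invoke Lemma \ref{compariprinciple} and Lemma \ref{vanishing1}. The paper makes the particular choice $\gamma=-\sigma_1=-\lambda_0(h_1)/2$ and a slightly sharper constant $M_0$, but these are inessential differences.
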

\begin{proof}
To prove this lemma, we construct a suitable upper solution to problem \eqref{model*}. Since $R_0>1$ and $h_0<l^*$,
by applying $(ii)$ of Proposition \ref{eigen_proposition}, we see that $\lambda_0(h_0)<0$, where $\lambda_0(h_0)$
is the principal eigenvalue of problem \eqref{eigenprobelm1} with $[-l,l]$ replaced by $[-h_0,h_0]$.
Fix $h_1\in(h_0,l^*)$; we have $\lambda_0(h_1)<0$. Denote  by $(\theta_1,\theta_2)$ a positive eigenfunction pair of \eqref{eigenprobelm1} with
$\lambda=\lambda_0(h_1)$.
Define, for $x\in[-h_1,h_1]$, $t>0$,
\begin{align*}
&\bar{h}(t):=h_1- \big(h_1-h_0\big)e^{\sigma_1 t},~\bar{g}(t)=-\bar{h}(t)\\
&\bar{u}(x,t):=Me^{\sigma_1 t}\theta_1(x),~\bar{v}(x,t):=Me^{\sigma_1 t}\theta_2(x)
\end{align*}
where
\begin{align*}
\sigma_1:=\frac{\lambda_0(h_1)}{2}<0,~~M:=\frac{-\sigma_1(h_1-h_0)}{M_0\mu(1+\rho)},
~~M_0:=\max\Big\{\int^{h_1}_{-h_1}\theta_1(x)dx,\int^{h_1}_{-h_1}\theta_2(x)dx\Big\}.
\end{align*}
Before applying Lemma \ref{compariprinciple}, we first check some conditions.
Clearly,
\begin{align*}
\bar{u}(x,t)\geq 0,~\bar{v}(x,t)\geq0 \mbox{ for }x=\bar{g}(t) \mbox{ or } \bar{h}(t),~t>0.
\end{align*}
For the initial values, if we choose $(u_0,v_0)$ suitably small such that
\begin{align*}
\|u_0\|_{\infty}+\|v_0\|_{\infty}\leq M\min_{x\in[-h_0,h_0]}\{\theta_1(x),\theta_2(x)\}.
\end{align*}
then
\begin{align*}
u_0(x)\leq \bar{u}(x,0)=M\theta_1(x),~v_0(x)\leq \bar{v}(x,0)=M\theta_2(x) \mbox{ for }x\in[-h_0,h_0].
\end{align*}
It is easily checked that $\bar{h}(0)=h_0$ and moreover $h_0\leq\bar{h}(t)< h_1$ for $t\geq0$.
Similarly, $\bar{g}(0)=-h_0$ and $-h_1<\bar{g}(t)\leq -h_0$ for $t\geq0$.

By simple calculations, for $x\in[\bar{g}(t),\bar{h}(t)],~t>0$,
\begin{align*}
&\bar{u}_t-d_1\int_{\bar{g}(t)}^{\bar{h}(t)}J_1(x-y)\bar{u}(y,t)dy+(d_1+a_{11})\bar{u}
-a_{12}\int_{\bar{g}(t)}^{\bar{h}(t)}K(x-y)\bar{v}(y,t)dy\\
\geq&\ \sigma_1 \bar{u}-\lambda_0(h_1) \bar{u}\geq 0,
\end{align*}
and
\begin{align*}
&\bar{v}_t-d_2\int_{\bar{g}(t)}^{\bar{h}(t)}J_2(x-y)\bar{v}(y,t)dy+(d_2+a_{22})\bar{v}
-G(\bar{u})\\
> &\ \sigma_1 \bar{v}-d_2\int_{\bar{g}(t)}^{\bar{h}(t)}J_2(x-y)Me^{\sigma_1 t}\theta_2(y)dy+(d_2+a_{22})Me^{\sigma_1 t}\theta_2(x)
-G'(0)Me^{\sigma_1 t}\theta_1(x)\\
\geq&\ \sigma_1 \bar{v}-\lambda_0(h_1) \bar{v}\geq0,
\end{align*}
due to $G'(0)\bar{u}>G(\bar{u})$ for $\bar{u}>0$.
Furthermore, we have
\begin{align*}
&\mu  \int_{\bar{g}(t)}^{\bar{h}(t)}\int^{\infty}_{\bar{h}(t)}J_1(x-y)\bar{u}(x,t)dydx+
\mu\rho \int_{\bar{g}(t)}^{\bar{h}(t)}\int^{\infty}_{\bar{h}(t)}J_2(x-y)\bar{v}(x,t)dydx\\
\leq &\ \mu  \int_{\bar{g}(t)}^{\bar{h}(t) }\bar{u}(x,t)dx+\mu\rho \int_{\bar{g}(t)}^{\bar{h}(t)}\bar{v}(x,t)dx\\
=&\ \mu M e^{\sigma_1 t}\int_{\bar{g}(t)}^{\bar{h}(t)}\theta_1(x)dx +\mu\rho
Me^{\sigma_1 t}\int_{\bar{g}(t)}^{\bar{h}(t)}\theta_2(x)dx\\
\leq &\ \mu M e^{\sigma_1 t}\int_{-h_1}^{h_1}\theta_1(x)dx +\mu\rho
Me^{\sigma_1 t}\int_{-h_1}^{h_1}\theta_2(x)dx\\
\leq&-(h_1-h_0)\sigma_1 e^{\sigma_1 t}=\bar{h}'(t) ~~\mbox{for }t>0,
\end{align*}
since $[\bar{g}(t),\bar{h}(t)]\subset[-h_1,h_1]$.
Analogously, we have, for $t>0$,
\begin{align*}
-\mu  \int_{\bar{g}(t)}^{\bar{h}(t)}\int_{-\infty}^{\bar{g}(t)}J_1(x-y)\bar{u}(x,t)dydx-
\mu\rho\int_{\bar{g}(t)}^{\bar{h}(t)} \int_{-\infty}^{\bar{g}(t)}J_2(x-y)\bar{v}(x,t)dydx
\geq \bar{g}'(t).
\end{align*}

Thus $(\bar{u},\bar{v},\bar{g},\bar{h})$
is an upper solution of problem \eqref{model*}, and by
 Lemma \ref{compariprinciple}, we have $h(t)\leq \bar{h}(t)$ and $g(t)\geq \bar{g}(t)$
for all $t>0$, which gives
 \begin{align}\label{h-g<infty}
h_{\infty}-g_{\infty}\leq \bar{h}(\infty)-\bar{g}(\infty)=2h_1<2l^*.
\end{align}
This shows that vanishing happens by Lemma \ref{vanishing1}.
\end{proof}
\begin{remark}\label{vanishingrm}
By the expression of $M$ in Lemma \ref{vanishing3}, we see that
 $M\rightarrow \infty$ when  $\mu\rightarrow 0$. This implies that, under the assumptions $R_0>1$ and $h_0<l^*$, for any given pair of initial functions
$(u_0,v_0)$ satisfying \eqref{Assumption}, there exists a small $\mu_0>0$ such that vanishing happens if
$\mu\in(0,\mu_0]$.
\end{remark}

\begin{lemma}\label{vanishing4}
$h_{\infty}<\infty$ if only and only if $-g_{\infty}<\infty$.
\end{lemma}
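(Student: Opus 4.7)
The plan is first to dispose of the easy case and then argue by contradiction. If $R_0\leq 1$, Lemma \ref{vanishing2} already guarantees $h_\infty-g_\infty<\infty$, so both $h_\infty$ and $-g_\infty$ are automatically finite and the equivalence is trivial. I therefore restrict attention to $R_0>1$. By the symmetry of the argument under swapping $h$ with $-g$, it suffices to show one implication, say: if $h_\infty<\infty$ then $-g_\infty<\infty$.

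Arguing by contradiction, suppose $h_\infty<\infty$ and $-g_\infty=\infty$. First I fix a small $\varepsilon>0$ so that $J_i(x)\geq\delta>0$ on $[-4\varepsilon,4\varepsilon]$ for $i=1,2$. Since $h(t)\nearrow h_\infty$ and $-g(t)\nearrow\infty$, I can then choose $T>0$ large enough so that simultaneously $h_\infty-h(T)<\varepsilon$ and the interval $[g(T),h(T)]$ is so long that, by the translation invariance of \eqref{eigenprobelm1} in $x$ combined with Lemma \ref{eigen_proposition}$(ii)$ and Lemma \ref{PEV-l}, the principal eigenvalue of \eqref{eigenprobelm1} posed on $[g(T),h(T)]$ is strictly positive.

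Next I compare $(u,v)$ with the solution $(\tilde u,\tilde v)$ of the fixed-boundary problem \eqref{model_fixbound} on $[g(T),h(T)]$ with initial data $(u(\cdot,T),v(\cdot,T))$, as done in the proof of Lemma \ref{vanishing1}. The inclusion $[g(T),h(T)]\subset[g(t+T),h(t+T)]$ together with the nonnegativity of $J_1,J_2,K$ and $(u,v)$ shows that $(u(\cdot,t+T),v(\cdot,t+T))$ serves as an upper solution of the fixed-boundary system on $[g(T),h(T)]$, so Remark \ref{compariprinciple1} yields $(u(x,t+T),v(x,t+T))\succeq(\tilde u(x,t),\tilde v(x,t))$ there. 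By the positivity of the eigenvalue just arranged, Theorem \ref{0steadystate}$(i)$ gives $(\tilde u,\tilde v)\to (w,z)$ uniformly on $[g(T),h(T)]$ with $(w,z)\in D_+^0$ the unique positive steady state. Hence there exist $T_0>T$ and $m_2>0$ with $u(x,t),v(x,t)\geq m_2$ on $[g(T),h(T)]$ for all $t\geq T_0$.

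The closing step merely repeats the chain of inequalities \eqref{primeh}. Because $h_\infty-h(T)<\varepsilon$, one has $[h(T)-\varepsilon,h(T)]\subset[h(t)-2\varepsilon,h(t)]$ for every $t\geq T_0$, and that calculation delivers $h'(t)\geq 2\mu\delta\varepsilon^2 m_2(1+\rho)>0$, which forces $h(t)\to\infty$ and contradicts $h_\infty<\infty$. The main technical point is arranging the two conditions on $T$ at once, namely positivity of the principal eigenvalue on $[g(T),h(T)]$ and smallness of $h_\infty-h(T)$: the first relies on the (contradictory) hypothesis $-g_\infty=\infty$ enlarging the interval beyond $2l^*$, while the second relies on $h_\infty<\infty$, so both ingredients of the contradiction are used essentially.
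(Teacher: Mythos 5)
Your proof is correct and follows essentially the same approach as the paper: argue by contradiction, note that the (contradictory) hypothesis forces $h(t)-g(t)>2l^*$ eventually so the principal eigenvalue on $[g(T),h(T)]$ is positive, compare with the fixed-boundary problem to get a uniform positive lower bound for $(u,v)$ near $x=h(T)$, and then repeat the estimate \eqref{primeh} to deduce $h'(t)\geq c_2>0$, contradicting $h_\infty<\infty$. You spell out the handling of $R_0\leq 1$ and the inclusion $[h(T)-\varepsilon,h(T)]\subset[h(t)-2\varepsilon,h(t)]$ a bit more explicitly than the paper, but the route is the same.
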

\begin{proof}
By way of contradiction, we assume that $h_{\infty}<\infty$ and $-g_{\infty}=\infty$. Then we see
that $h_{\infty}-g_{\infty}=\infty$ and thus by Lemma \ref{vanishing2}, $R_0>1$ must hold.
Due to $g_{\infty}=\infty$, there exists a large $T$ such that for all $t\geq T$, $h(t)-g(t)> 2l^*$.
It follows from $(ii)$ of Proposition \ref{eigen_proposition} that $\lambda_0(g(t),h(t))>0$ for such
$t\geq T$.
Since $h_{\infty}-g_{\infty}=\infty$, similar to \eqref{primeh}, we can find some constant $c_2>0$ such that for some
large $T^0>T$,
\begin{align*}
h'(t)=\mu \int_{g(t)}^{h(t)}\int^{\infty}_{h(t)}J_1(x-y)u(x,t)dyd
+\mu\rho \int_{g(t)}^{h(t)}\int^{\infty}_{h(t)}J_2(x-y)v(x,t)dydx>
c_2\mbox{ for }t>T^0,
\end{align*}
which contradicts $h(t)<\infty$ for all $t>0$.
Therefore, if $h_{\infty}<\infty$ holds, we have $-g_{\infty}<\infty$. Similarly we can show that $-g_{\infty}<\infty$ and $h_{\infty}=\infty$ lead to a contradiction.
\end{proof}
\begin{lemma}\label{spreading1}
If $R_0>1$ and $h_0\geq l^*$, then $h_{\infty}=-g_{\infty}=\infty$ and
\begin{align*}
\lim_{t\rightarrow \infty}(u(x,t),v(x,t))=(u^*,v^*) \mbox{ locally uniformly in }\mathbb{R},
\end{align*}
where $(u^*,v^*)$ is defined in \eqref{u^*v^*}.
\end{lemma}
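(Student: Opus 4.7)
The plan is to establish the two conclusions in turn. I first show $h_\infty=-g_\infty=\infty$ by contradiction. Suppose $h_\infty-g_\infty<\infty$; Lemma \ref{vanishing1} then gives $\lambda_0(g_\infty,h_\infty)\leq 0$. Because the kernels $J_1,J_2,K$ are even and the convolutions are translation invariant, a shift of variable identifies this eigenvalue with $\lambda_0\!\left(\tfrac{h_\infty-g_\infty}{2}\right)$ in the notation of \eqref{eigenprobelm1}. From the free boundary equations together with the strong positivity in the second half of Lemma \ref{maximalprinciple} (using $J_1(0)>0$), $h'(t)>0$ and $g'(t)<0$ for every $t>0$, so $h_\infty>h_0\geq l^*$ and $-g_\infty>h_0\geq l^*$. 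Thus $\tfrac{h_\infty-g_\infty}{2}>l^*$, and by the strict monotonicity of $\lambda_0(\cdot)$ in Lemma \ref{PEV-l} together with $\lambda_0(l^*)=0$ from Lemma \ref{eigen_proposition}(ii), I obtain $\lambda_0(g_\infty,h_\infty)>0$, a contradiction. Hence $h_\infty-g_\infty=\infty$, and Lemma \ref{vanishing4} forces $h_\infty=-g_\infty=\infty$.

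For the upper bound on $(u,v)$, I use a spatially constant supersolution driven by the kinetic ODE \eqref{model*ODE2}. Let $(\bar U(t),\bar V(t))$ solve that ODE with $\bar U(0)=\bar V(0)=M$ chosen so large that $M\geq \max\{\|u_0\|_\infty,\|v_0\|_\infty\}$. Using $\int_\R J_i=\int_\R K=1$ and the nonnegativity of these kernels, a direct calculation shows that $(\bar U-u,\bar V-v)$ fits the hypotheses of Lemma \ref{maximalprinciple} on $[g(t),h(t)]$ — the boundary nonnegativity holds because $\bar U,\bar V>0$ while $u,v$ vanish on $\{g(t),h(t)\}$ — and therefore $u\leq \bar U$ and $v\leq \bar V$. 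Since $R_0>1$, the kinetic system has $(u^*,v^*)$ as its unique positive equilibrium attracting every positive orbit, so $(\bar U(t),\bar V(t))\to(u^*,v^*)$; this gives $\limsup_{t\to\infty}u(x,t)\leq u^*$ and $\limsup_{t\to\infty}v(x,t)\leq v^*$, uniformly in $x$.

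For the lower bound I combine a fixed-boundary comparison with Lemma \ref{l_infty}. Fix $L>l^*$; since $h_\infty=-g_\infty=\infty$, there exists $T_L>0$ with $[-L,L]\subset(g(T_L),h(T_L))$. Pick continuous $(\underline u_0,\underline v_0)$ on $[-L,L]$ with $(0,0)\llp(\underline u_0,\underline v_0)\preceq(u(\cdot,T_L),v(\cdot,T_L))$, and let $(\tilde u,\tilde v)$ solve the fixed-boundary problem \eqref{model_fixbound} on $[-L,L]$ with these data. Extending $(\tilde u,\tilde v)$ by zero outside $[-L,L]$ produces $(\tilde u^*,\tilde v^*)$ for which the system holds with equality on $[-L,L]$ (enlarging the integrals to $[g(t+T_L),h(t+T_L)]$ changes nothing because $\tilde u^*,\tilde v^*$ vanish on the complement), while on the annular region the time derivative is zero and the right-hand side is nonnegative; thus $(\tilde u^*,\tilde v^*)$ is a subsolution on $[g(t+T_L),h(t+T_L)]$. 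Applying Lemma \ref{maximalprinciple} to the difference on this moving domain — exactly as in the proof of Lemma \ref{vanishing1}, with boundary values vanishing from both sides — yields $(u(x,t+T_L),v(x,t+T_L))\succeq(\tilde u(x,t),\tilde v(x,t))$ on $[-L,L]$. Since $\lambda_0(L)>0$ by Lemma \ref{eigen_proposition}(ii), Theorem \ref{0steadystate}(i) gives $(\tilde u,\tilde v)\to(w_L,z_L)$ uniformly on $[-L,L]$, hence $\liminf_{t\to\infty}(u,v)\succeq(w_L,z_L)$. Lemma \ref{l_infty} then provides $(w_L,z_L)\to(u^*,v^*)$ locally uniformly as $L\to\infty$, and combined with the upper bound this delivers $(u,v)\to(u^*,v^*)$ locally uniformly.

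The main technical point I expect to need care with is the lower-bound comparison, specifically checking that the zero-extension $(\tilde u^*,\tilde v^*)$ genuinely serves as a subsolution across the interface $x=\pm L$ (where it is typically discontinuous), and justifying the use of Lemma \ref{maximalprinciple} on the moving domain for the resulting difference. This mirrors — and modestly generalises — the bookkeeping already carried out in the proof of Lemma \ref{vanishing1}, so the template is available; once it is in place, Lemma \ref{l_infty} does the decisive work of pushing the lower bound all the way up to $(u^*,v^*)$.
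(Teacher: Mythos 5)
Your overall strategy matches the paper's: establish $h_\infty=-g_\infty=\infty$ via eigenvalue monotonicity and Lemma \ref{vanishing1}, then sandwich $(u,v)$ between a kinetic ODE supersolution (for the limsup) and a fixed-boundary comparison solution pushed to $(u^*,v^*)$ by Lemma \ref{l_infty} (for the liminf). The first two parts are essentially correct and essentially the paper's; the divergence-of-fronts step in the paper is phrased directly rather than by contradiction, and the paper initialises the ODE at $(\|u_0\|_\infty,\|v_0\|_\infty)$ instead of a generic large $M$, but these differences are cosmetic.

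The gap is in the lower bound, and you have correctly flagged where it lies but have not closed it, nor does the template you cite close it. The proof of Lemma \ref{vanishing1} never extends $(\tilde u,\tilde v)$ by zero. It defines $\tilde U(x,t):=u(x,t+T)-\tilde u(x,t)$ and $\tilde V(x,t):=v(x,t+T)-\tilde v(x,t)$ on the \emph{fixed} interval $[g(T),h(T)]$, derives the differential inequality for $(\tilde U,\tilde V)$ there by enlarging the inner integrals from $[g(T),h(T)]$ to $[g(t+T),h(t+T)]$ and discarding the resulting nonnegative surplus, and then invokes Lemma \ref{maximalprinciple} via Remark \ref{compariprinciple1} on that fixed domain. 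Both $u(\cdot,t+T)$ restricted to $[g(T),h(T)]$ and $\tilde u$ are continuous, so the hypothesis $(u,v),(u_t,v_t)\in C(\overline\Omega_T)$ holds. Your zero-extension $(\tilde u^*,\tilde v^*)$, by contrast, is discontinuous at $x=\pm L$ for $t>0$ (with nonlocal diffusion, $\tilde u(\pm L,t)$ and $\tilde v(\pm L,t)$ become positive immediately even if the data vanish there), so $u-\tilde u^*$ and $v-\tilde v^*$ violate the continuity requirement of Lemma \ref{maximalprinciple}, and the lemma as stated does not apply; in particular the phrase ``exactly as in the proof of Lemma \ref{vanishing1}'' is not accurate. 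The repair is to drop the zero-extension altogether and compare $u(\cdot,t+T_L)$ restricted to $[-L,L]$ against $\tilde u(\cdot,t)$ directly on the fixed interval $[-L,L]$, imitating \eqref{compare_inequa}: the enlarged-integral estimate works because $u,v\geq 0$ on $[g(t+T_L),h(t+T_L)]\setminus[-L,L]$, and the initial comparison $\underline u_0\preceq u(\cdot,T_L)$, $\underline v_0\preceq v(\cdot,T_L)$ supplies the nonnegativity at $t=0$. With that change, Theorem \ref{0steadystate}(i) and Lemma \ref{l_infty} complete the argument as you describe.
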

\begin{proof}
If $R_0>1$ and $h_0\geq l^*$, by $(ii)$ of Proposition \ref{eigen_proposition}, for any $t>0$, we have
$\lambda_0(g(t),h(t))>0$, where $\lambda_0(g(t),h(t))$ is the unique eigenvalue of \eqref{eigenprobelm1}
with $[-l,l]$ replaced by $[g(t),h(t)]$. Since $(g(t),h(t))\subset (g_{\infty},h_{\infty})$ for $t>0$,
$\lambda_0(g_{\infty},h_{\infty})>0$ holds.
By Lemma \ref{vanishing1}, we have $h_{\infty}-g_{\infty}=\infty$. Lemma
\ref{vanishing4} shows that $h_{\infty}=-g_{\infty}=\infty$.

Next we will prove the second assertion by showing the limit superior and limit inferior
of the solution of \eqref{model*} are both $(u^*,v^*)$. We first consider the limit inferior of
this solution.
Due to $\lambda_0(g(T),h(T))>0$ for any fixed $T>0$, we define by $(\tilde{u},\tilde{v})$
the unique positive solution of \eqref{model_fixbound} in $[g(T),h(T)]\times(0,\infty)$ with initial function pair
$(\tilde{u}(x,0),\tilde{v}(x,0))=(u(x,T),v(x,T)$. By applying Proposition \ref{0steadystate}, we see
\begin{align*}
\lim_{t\rightarrow\infty }(\tilde{u}(x,t),\tilde{v}(x,t))=(w(x),z(x)) \mbox{ uniformly for }x\in[g(T),h(T)],
\end{align*}
where $(w(x),z(x))$ is the unique solution of problem \eqref{model_steady} with
$[-l,l]$ replaced by $[g(T),h(T)]$.
Similar to \eqref{compare_inequa}  we see that for any $t>0$,
\begin{align*}
(\tilde{u}(x,t),\tilde{v}(x,t))\preceq(u(x,t+T),v(x,t+T)) \mbox{ for }x\in[g(T),h(T)],
\end{align*}
which implies that
\begin{align*}
(w(x),z(x))\preceq\liminf_{t\rightarrow \infty} (u(x,t),v(x,t)) \mbox{ for }x\in[g(T),h(T)].
\end{align*}
Since $(g(T),h(T))\rightarrow (-\infty,\infty)$ as $T\rightarrow \infty$, by Lemma \ref{l_infty}, we see that
\begin{align*}
(u^*,v^*)\preceq \liminf_{t\rightarrow \infty}(u(x,t),v(x,t))   \mbox{ locally uniformly in }\mathbb{R}.
\end{align*}

Finally we will consider the limit superior of the solution of \eqref{model*}. Denote by $(\bar{u}(t),\bar {v}(t))$
the unique solution of \eqref{model*ODE2} with initial function pair $(M_1,M_2)=(\|u_0\|_\infty,\|v_0\|_\infty)$. It is easily checked
that $(\bar{u},\bar {v})$ is an upper solution of problem \eqref{model*}, and hence
\begin{align*}
(u(x,t),v(x,t))\preceq (\bar{u}(t),\bar {v}(t))\mbox{ for }t>0,~x\in[g(t),h(t)].
\end{align*}
Since $R_0>1$, we have
\begin{align*}
\lim_{t\rightarrow \infty}(\bar{u}(t),\bar {v}(t))=(u^*,v^*),
\end{align*}
which implies that the limit superior of $(u(x,t),v(x,t))$ is $(u^*,v^*)$ uniformly in $x$. This finishes the proof.
\end{proof}

\begin{lemma}\label{spreading2}
Assume that $R_0>1$ and $h_0<l^*$. Then there exists $\mu^0$ such that spreading occurs
when $\mu>\mu^0$.
\end{lemma}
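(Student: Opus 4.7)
The plan is to show that for $\mu$ sufficiently large there is a finite time $T^*$ with $h(T^*) - g(T^*) > 2 l^*$. Once this is in hand, translation invariance of \eqref{eigenprobelm1} together with Lemma \ref{PEV-l} and Proposition \ref{eigen_proposition}(ii) yields $\lambda_0(g(T^*), h(T^*)) > 0$, and hence $\lambda_0(g_\infty, h_\infty) > 0$ since $[g(T^*), h(T^*)] \subseteq [g_\infty, h_\infty]$. Lemma \ref{vanishing1} then rules out $h_\infty - g_\infty < \infty$, Lemma \ref{vanishing4} forces $h_\infty = -g_\infty = \infty$, and the convergence $(u, v) \to (u^*, v^*)$ locally uniformly follows exactly as in the proof of Lemma \ref{spreading1}.

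To estimate the free boundary for large $\mu$, I will first extract a $\mu$-independent pointwise lower bound on $(u, v)$. Comparing $(u, v)$ restricted to $[-h_0, h_0]$ with the solution $(\tilde u, \tilde v)$ of the fixed-boundary problem \eqref{model_fixbound} on $[-h_0, h_0]$ with the original initial data, using that $g(t) \leq -h_0 \leq h_0 \leq h(t)$ and the kernels are nonnegative, and invoking Remark \ref{compariprinciple1}, gives $u \geq \tilde u$ and $v \geq \tilde v$ on $[-h_0, h_0] \times [0, \infty)$ with no dependence on $\mu$. The strong maximum principle (Lemma \ref{maximalprinciple}) then provides $\tilde u, \tilde v \geq c^* > 0$ on $[-h_0, h_0] \times [T_0, T_1]$ for any $0 < T_0 < T_1$. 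Next, fix $\varepsilon \in (0, h_0/2)$ and $\delta > 0$ so that $J_i(z) \geq \delta$ for $|z| \leq 2\varepsilon$. For $s \in [T_0, T_1]$ with $h(s) \in [h_0, h_0 + \varepsilon/2]$, the strip $[h(s)-\varepsilon, h(s)-\varepsilon/2]$ lies inside $[-h_0, h_0]$, and on it one checks $\int_{h(s)}^\infty J_1(x-y)\,dy \geq \delta\varepsilon/4$; combined with $u \geq c^*$ this gives $h'(s) \geq \mu c^* \delta \varepsilon^2/8$. Integrating, if $\mu$ exceeds a threshold depending only on $c^*, \delta, \varepsilon, T_1 - T_0$, the boundary $h$ must exit $[h_0, h_0 + \varepsilon/2]$ within $[T_0, T_1]$, and the symmetric argument controls $g$.

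I would then iterate: at step $k$ the reference interval is $[-h_0 - (k-1)\varepsilon/2, h_0 + (k-1)\varepsilon/2]$, and a fresh $\mu$-independent lower bound on $(u, v)$ on this interval is obtained by comparing with the fixed-boundary problem on it with initial data equal to the previous step's lower bound extended by zero; after $O((l^* - h_0)/\varepsilon)$ such steps the boundaries pass $\pm l^*$ for $\mu$ exceeding the largest threshold encountered in the iteration, and this largest threshold defines $\mu^0$. The main technical obstacle is keeping the chain of lower bounds $\mu$-independent throughout: this is possible because each comparison uses as initial data the previous step's $\mu$-independent bound (available at any starting time in a fixed compact window by time translation invariance of the auxiliary problem), and the strong maximum principle then propagates strict positivity with constants depending only on $(u_0, v_0)$ and the iteration schedule, never on $\mu$ itself.
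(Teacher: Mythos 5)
Your proposal is correct in outline but follows a genuinely different route from the paper's proof. The paper argues by contradiction: assuming vanishing for every $\mu>0$, it uses the monotonicity of $(u_\mu,v_\mu,-g_\mu,h_\mu)$ in $\mu$ to define $H_\infty:=\lim_{\mu\to\infty}h_{\mu,\infty}$ and $G_\infty:=\lim_{\mu\to\infty}g_{\mu,\infty}$, notes via Lemma \ref{vanishing1} and translation invariance of the eigenvalue problem that $H_\infty-G_\infty\leq 2l^*$, and then, after fixing a reference value $\mu_1$ and a unit time window $[t_1,t_1+1]$, integrates the free boundary ODE to bound $\mu$ from above by a quantity depending only on $\mu_1$, giving a contradiction. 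Your approach is direct and iterative: you extract $\mu$-independent positivity bounds on $(u,v)$ by comparison with fixed-boundary problems on a growing chain of reference intervals, and you show that under these bounds the free boundaries advance by at least $\varepsilon/2$ per step once $\mu$ exceeds a step-dependent threshold $\mu_k$, so after finitely many steps the range exceeds $2l^*$; taking $\mu^0:=\max_k\mu_k$ finishes the job. Both are valid; the paper's proof is shorter but nonconstructive, while yours is longer (the $c_k^*$-chain and the containment $[-h_0-(k-1)\varepsilon/2,\,h_0+(k-1)\varepsilon/2]\subset[g(t),h(t)]$ and of the strip $[h(s)-\varepsilon,h(s)-\varepsilon/2]$ inside it must be tracked carefully at every step) but yields, in principle, an explicit $\mu^0$. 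Two small points worth making precise if you write this up: (a) when comparing $(u,v)$ with a fixed-boundary solution via Remark \ref{compariprinciple1}, the lateral boundary condition in Lemma \ref{maximalprinciple} is vacuous since the differential inequality holds on the closed interval and $t_1(x_0)=0$ throughout; (b) the final passage to spreading is not literally Lemma \ref{spreading1} (which assumes $h_0\geq l^*$) but its straightforward adaptation once $h(T^*)-g(T^*)>2l^*$ holds for some finite $T^*$, which, as you note, gives $\lambda_0(g(T),h(T))>0$ for all $T\geq T^*$ and makes the $\liminf$/$\limsup$ argument go through unchanged.
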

\begin{proof}
In this proof, to highlight the dependence on $\mu$, we denote by
$(u_{\mu},v_{\mu},g_{\mu},h_{\mu})$ the unique
solution of \eqref{model*}.
Then, suppose for contradiction that for all $\mu>0$, vanishing happens and
thus $h_{\mu,\infty}-g_{\mu,\infty}<\infty$, where
\begin{equation}\label{hg_mu}
g_{\mu,\infty}:=\lim_{t\rightarrow \infty}g_{\mu}(t),~h_{\mu,\infty}:=\lim_{t\rightarrow \infty}h_{\mu}(t).
\end{equation}
It is easily checked that $u_{\mu}$, $v_{\mu}$, $-g_{\mu}(t)$ and $h_{\mu}(t)$ are increasing in $\mu>0$ by
Lemma \ref{compariprinciple}. Therefore,
\begin{equation*}
H_{\infty}:=\lim_{\mu\rightarrow\infty}h_{\mu,\infty}\mbox{ and }G_{\infty}:=\lim_{\mu\rightarrow\infty}g_{\mu,\infty}
\end{equation*}
exist.
By Lemma \ref{vanishing1}, $\lambda_0(G_{\infty},H_{\infty})\leq0$. Due to $R_0>1$, by
$(ii)$ of Proposition \ref{eigen_proposition}, we see that $H_{\infty}-G_{\infty}\leq 2l^*$.
Since $J_i(0)>0$, there exist $\varepsilon>0$ and some positive constant
 $\delta_0>0$ such that, for $i=1,2$,
\begin{equation*}
J_i(x)\geq\delta_0>0  \mbox{ for }x\in[-4\varepsilon,4\varepsilon].
\end{equation*}
Fix $t_1>0$ and $\mu_1>0$ such that
\[
\begin{cases}h_{\mu,\infty}>H_\infty-\varepsilon/2,\; g_{\mu,\infty}<G_\infty+\varepsilon/2 \mbox{ for }\mu\geq \mu_1,\\
h_{\mu_1}(t)\geq h_{\mu_1,\infty}-\varepsilon/2> H_\infty-\varepsilon,\; g_{\mu_1}(t)\leq g_{\mu_1,\infty}+\varepsilon/2< G_\infty+\varepsilon \mbox{ for } t\geq t_1.
\end{cases}
\]
 Then by \eqref{model*}, we see that, for all $\mu>\mu_1$,
\begin{align*}
\mu=&\bigg(\int^{\infty}_{t_1}\int_{g_{\mu}(s)}^{h_{\mu}(s)}\int^{\infty}_{h_{\mu}(s)}J_1(x-y)u_{\mu}(x,s)dydxds\\
&+\rho\int^{\infty}_{t_1}\int_{g_{\mu}(s)}^{h_{\mu}(s)}\int^{\infty}_{h_{\mu}(s)}J_2(x-y)v_{\mu}(x,s)dydxds\bigg)^{-1}
\big[h_{\mu,\infty}-h_{\mu}(t_1)\big]\\
\leq&\bigg(\int^{t_1+1}_{t_1}\int_{g_{\mu_1}(s)}^{h_{\mu_1}(s)}\int^{\infty}_{H_\infty}J_1(x-y)u_{\mu_1}(x,s)dydxds\\
&+\rho\int^{t_1+1}_{t_1}\int_{g_{\mu_1}(s)}^{h_{\mu_1}(s)}
\int^{\infty}_{H_\infty}J_2(x-y)v_{\mu_1}(x,s)dydxds\bigg)^{-1}
2l^*\\
\leq&\bigg(\int^{t_1+1}_{t_1}\int_{H_\infty-3\varepsilon}^{H_\infty-\varepsilon}\int^{H_\infty+\varepsilon}_{H_\infty}\delta_0u_{\mu_1}(x,s)dydxds\\
&+\rho\int^{t_1+1}_{t_1}\int_{H_\infty-3\varepsilon}^{H_\infty-\varepsilon}
\int^{H_\infty+\varepsilon}_{H_\infty}\delta_0v_{\mu_1}(x,s)dydxds\bigg)^{-1}
2l^*\\
=&\ \bigg(\int^{t_1+1}_{t_1}\int_{H_\infty-3\varepsilon}^{H_\infty-\varepsilon}
\varepsilon\delta_0 [u_{\mu_1}(x,s)+\rho v_{\mu_1}(x,s)]dxds\bigg)^{-1}2l^*
< \infty.
\end{align*}
This is a contradiction. Therefore, there exists some $\mu^0>0$ such that
 spreading happens when $\mu>\mu^0$.
\end{proof}

\begin{theorem}
Suppose that $R_0>1$ and $h_0<l^*$. Then there exists $\mu^*>0$ depending on $(u_0,v_0)$ such that
vanishing happens when $\mu\in(0,\mu^*]$ and spreading occurs when $\mu\in(\mu^*,\infty)$.
\end{theorem}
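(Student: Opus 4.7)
The plan is to define
\[
\mu^* := \sup\{\mu>0 : \text{vanishing occurs for }\eqref{model*}\text{ with parameter }\mu\}
\]
and show that this is the desired threshold. Lemma \ref{vanishing3} together with Remark \ref{vanishingrm} yields a small $\mu_0>0$ for which vanishing occurs on $(0,\mu_0]$, so the set is nonempty; Lemma \ref{spreading2} yields a large $\mu^0>0$ for which spreading occurs on $(\mu^0,\infty)$, so the set is bounded above by $\mu^0$. Hence $\mu^*\in[\mu_0,\mu^0]$ is well defined and finite.

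Next I exploit the monotonicity in $\mu$ to handle all $\mu\neq\mu^*$. As already observed in the proof of Lemma \ref{spreading2}, Lemma \ref{compariprinciple} makes the maps $\mu\mapsto h_\mu(t)$ and $\mu\mapsto -g_\mu(t)$ nondecreasing. For any $\mu_1<\mu^*$, the definition of the supremum provides some $\mu\in(\mu_1,\mu^*]$ at which vanishing occurs, so $h_{\mu,\infty}<\infty$; monotonicity then gives $h_{\mu_1,\infty}\leq h_{\mu,\infty}<\infty$, so vanishing also holds at $\mu_1$ by Theorem \ref{svd}. For $\mu>\mu^*$, the supremum definition together with Theorem \ref{svd} forces spreading directly.

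The crucial and delicate step, which I see as the main obstacle, is to verify that vanishing also occurs at $\mu=\mu^*$ itself. I will argue by contradiction: assume spreading occurs at $\mu^*$. Then by Theorem \ref{svd}, $h_{\mu^*}(t)\to\infty$ and $-g_{\mu^*}(t)\to\infty$, so there exists $T>0$ with $h_{\mu^*}(T)>l^*$ and $-g_{\mu^*}(T)>l^*$. The extra ingredient I invoke is continuous dependence of the quadruple $(u_\mu,v_\mu,g_\mu,h_\mu)$ on $\mu$ on the finite time interval $[0,T]$, a routine adaptation of the continuous dependence on initial data already used inside the proof of Lemma \ref{compariprinciple}. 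This gives $h_\mu(T)>l^*$ and $-g_\mu(T)>l^*$ for $\mu<\mu^*$ sufficiently close to $\mu^*$, whence $[g_\mu(T),h_\mu(T)]\supsetneq[-l^*,l^*]$. An interval-monotonicity property of the principal eigenvalue (a straightforward extension of Lemma \ref{PEV-l} from symmetric to general intervals) then yields $\lambda_0(g_\mu(T),h_\mu(T))>\lambda_0(l^*)=0$. Restarting \eqref{model*} at $t=T$ and reproducing the opening argument in the proof of Lemma \ref{vanishing1}, where $\lambda_0>0$ on the current interval is shown to be incompatible with $h_\infty-g_\infty<\infty$, I conclude that spreading occurs at $\mu$, contradicting the choice $\mu<\mu^*$. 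The two mild upgrades required here — continuous dependence on the parameter $\mu$ and the eigenvalue monotonicity for asymmetric intervals — are natural generalizations of tools already developed in Sections 2 and 3; and the required dependence of $\mu^*$ on $(u_0,v_0)$ is automatic since the threshold $\mu_0$ from Remark \ref{vanishingrm} depends on $(u_0,v_0)$ through the constant $M$.
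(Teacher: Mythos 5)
Your proposal is correct and takes essentially the same route as the paper: define $\mu^*$ as a supremum of the set of parameters giving vanishing (the paper uses the equivalent set $\Lambda=\{\mu:h_{\mu,\infty}-g_{\mu,\infty}\leq 2l^*\}$), bracket it between the thresholds from Remark \ref{vanishingrm} and Lemma \ref{spreading2}, handle $\mu\ne\mu^*$ via the comparison-principle monotonicity in $\mu$ and the dichotomy, and settle $\mu=\mu^*$ by contradiction using continuous dependence on $\mu$ over a finite time window. The one cosmetic difference is in the last step: because the paper's $\Lambda$ is phrased directly through the length condition $h_\infty-g_\infty\leq 2l^*$, once $h_\mu(T)-g_\mu(T)>2l^*$ and $t\mapsto h_\mu(t)-g_\mu(t)$ is increasing, membership in $\Lambda$ is ruled out for a whole neighbourhood of $\mu^*$ with no further machinery, whereas your formulation (set of $\mu$ with vanishing) forces you to pass back through the eigenvalue comparison $\lambda_0(g_\mu(T),h_\mu(T))>\lambda_0(l^*)=0$ and the contrapositive of Lemma \ref{vanishing1} to deduce spreading. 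Both rely on the same underlying ingredients (Lemma \ref{vanishing1}, translation invariance of the eigenvalue, monotonicity in the interval), so the logical content is identical; the paper's choice of $\Lambda$ merely localizes the eigenvalue argument to a single preliminary remark rather than inside the contradiction step.
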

\begin{proof}
Let us denote by $(u_{\mu},v_{\mu},g_{\mu},h_{\mu})$ the unique
solution of \eqref{model*}
to highlight its dependence on $\mu$.
By Lemma \ref{vanishing1}, $h_{\mu,\infty}-g_{\mu, \infty}\leq 2l^*$ holds once vanishing happens,  where $g_{\mu,\infty},h_{\mu,\infty}$ are defined as in
\eqref{hg_mu}.
Define
\begin{equation*}
\Lambda:=\{\mu>0:h_{\mu,\infty}-g_{\mu,\infty}\leq 2l^*\}.
\end{equation*}
By Remark \ref{vanishingrm}, it is obvious that $(0,\mu_0]\subset\Lambda$. By Lemma \ref{spreading2},
$(\mu^0,\infty)\cap\Lambda=\emptyset$.
Therefore, $\sup\Lambda\in[\mu_0,\mu^0]$. We define $\mu^*:=\sup\Lambda$. By Lemma \ref{spreading1},  clearly
$h_{\mu,\infty}-g_{\mu,\infty}=\infty$ and spreading happens for every $\mu>\mu^*$.

Next we  show that $\mu^*\in\Lambda$. Otherwise, $\mu^*\notin\Lambda$. Then we see that
$h_{\mu,\infty}-g_{\mu,\infty}=\infty$ if $\mu=\mu^*$. Hence we can find a large $T>0$ such that
$h_{\mu^*}(T)-g_{\mu^*}(T)>2l^*$. By the continuous dependence of $(u_{\mu},v_{\mu},g_{\mu},h_{\mu})$
on $\mu$, there exists a small enough $\epsilon>0$ such that $h_{\mu}(T)-g_{\mu}(T)>2l^*$ holds
for every $\mu\in[\mu^*-\epsilon,\mu^*+\epsilon]$.
Since $h_{\mu}(t)$ and $-g_{\mu}(t)$ are both increasing in $t$, we see that
\begin{equation*}
\lim_{t\rightarrow \infty}\big[h_{\mu}(t)-g_{\mu}(t)\big]>h_{\mu}(T)-g_{\mu}(T)>2l^*
\end{equation*}
for $\mu\in[\mu^*-\epsilon,\mu^*+\epsilon]$. Then $\sup\Lambda\leq \mu^*-\epsilon$, which contradicts
the definition of $\mu^*$. Thus, $\mu^*\in\Lambda$.

By applying Lemma \ref{compariprinciple}, we see that $(u_{\mu^*},v_{\mu^*},g_{\mu^*},h_{\mu^*})$
is an upper solution of \eqref{model*} for every $\mu\in(0,\mu^*)$ and
\begin{equation*}
h_{\mu}(t)\leq h_{\mu^*}(t),~g_{\mu}(t)\geq g_{\mu^*}(t) \mbox{ for }t>0.
\end{equation*}
Therefore,
\begin{equation*}
\lim_{t\rightarrow \infty}\big[h_{\mu}(t)-g_{\mu}(t)\big]\leq \lim_{t\rightarrow \infty}\big[
h_{\mu^*}(t)-g_{\mu^*}(t)\big]\leq 2l^*,
\end{equation*}
which implies that $\mu\in\Lambda$ for every $\mu\in(0,\mu^*]$. Hence, $\Lambda=(0,\mu^*]$.
\end{proof}

\section{Proof of Theorem 1.1}

We give the  proof of Theorem 1.1 in this section. We always assume that $J_1, J_2$ and $K$ satisfy {\bf (J)}, $G$ satisfies $(G1)$-$(G2)$,
and $(u_0(x),v_0(x))$ satisfies \eqref{Assumption}.

Define
\[K_1:=\max\Big\{u^*,~\|u_0\|_{\infty},~\frac{a_{12}}{a_{11}}\|v_0\|_{\infty}\Big\},
K_2:=\max\Big\{\|v_0\|_{\infty},,~\frac{G(K_1)}{a_{22}}\Big\},\]
where $ u^*$ is given as before if $R_0>1$, and $u^*=0 \mbox{ if } R_0\leq1$.
Clearly,
\begin{equation}\label{K_1K_2}
-a_{11}K_1+a_{12}K_2\leq0~~\text{and} ~~-a_{22}K_2+G(K_1)\leq0.
\end{equation}
\begin{lemma}\label{solution_uv}
For any $T>0$ and $(g,h)\in G^T\times H^T$ with $G^T$, $H^T$ given by \eqref{definition_original}, the following problem
\begin{equation}\label{model3}
\begin{cases}\displaystyle
u_t=d_1\int_{g(t)}^{h(t)}J_1(x-y)u(y,t)dy-d_1u-a_{11}u&\\
\displaystyle
\hspace{2.7cm}+a_{12}\int_{g(t)}^{h(t)}K(x-y)v(y,t)dy,&0<t\leq T,~~x\in(g(t),h(t)),\\
\displaystyle
v_t=d_2\int_{g(t)}^{h(t)}J_2(x-y)v(y,t)dy-d_2v-a_{22}v+G(u),&0<t\leq T,~~x\in(g(t),h(t)),\\
u(h(t),t)=u(g(t),t)=v(h(t),t)=v(g(t),t)=0,&0<t\leq T,\\
u(x,0)=u_0(x),~~v(x,0)=v_0(x),&x\in[-h_0,h_0],
\end{cases}
\end{equation}
admits a unique solution $(u,v)\in X^T=X^T(g,h,u_0,v_0)$ satisfying
\begin{equation}\label{estimate_uv}
0<u(x,t)\leq K_1,~~0<v(x,t)\leq K_2\mbox{ for } (x,t)\in\Omega_{T}.
\end{equation}
\end{lemma}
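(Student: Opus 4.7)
The plan is to recast \eqref{model3} as a system of integral equations, solve it by a Banach contraction argument on a short time interval, upgrade to the full interval $[0,T]$ via an a priori $L^\infty$ bound coming from comparison with the constant pair $(K_1,K_2)$, and finally use Lemma \ref{maximalprinciple} to obtain strict positivity.

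First I would introduce, for each $x \in [g(T), h(T)]$, the entry time $\tau(x) \in [0, T]$ of $x$ into the moving interval: $\tau(x) = 0$ for $|x| \leq h_0$, $\tau(x) = t_x^h$ (with $h(t_x^h) = x$) for $x \in (h_0, h(T)]$, and $\tau(x) = t_x^g$ (with $g(t_x^g) = x$) for $x \in [g(T), -h_0)$; the strict monotonicity built into $G^T$ and $H^T$ ensures $\tau$ is well-defined and continuous. Integrating the first two equations of \eqref{model3} as linear ODEs in $t$ for fixed $x$, with integrating factors $e^{(d_1+a_{11})t}$ and $e^{(d_2+a_{22})t}$ respectively, and using the side conditions $u(x,\tau(x)) = u_0(x)\mathbf{1}_{[-h_0,h_0]}(x)$ and $v(x,\tau(x)) = v_0(x)\mathbf{1}_{[-h_0,h_0]}(x)$, yields the integral system
\begin{equation*}
\begin{aligned}
u(x,t) &= e^{-(d_1+a_{11})(t-\tau(x))} u_0(x)\mathbf{1}_{[-h_0,h_0]}(x) + \int_{\tau(x)}^t e^{-(d_1+a_{11})(t-s)} F_1(u,v)(x,s)\,ds,\\
v(x,t) &= e^{-(d_2+a_{22})(t-\tau(x))} v_0(x)\mathbf{1}_{[-h_0,h_0]}(x) + \int_{\tau(x)}^t e^{-(d_2+a_{22})(t-s)} F_2(u,v)(x,s)\,ds,
\end{aligned}
\end{equation*}
where $F_1(u,v)(x,s) := d_1 \int_{g(s)}^{h(s)} J_1(x-y) u(y,s)\,dy + a_{12} \int_{g(s)}^{h(s)} K(x-y) v(y,s)\,dy$ and $F_2(u,v)(x,s) := d_2 \int_{g(s)}^{h(s)} J_2(x-y) v(y,s)\,dy + G(u(x,s))$. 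I would then define a map $\Gamma$ sending $(u,v)$ to the right-hand side above, and work on the closed set $B := \{(u,v) \in X^{T_0} : 0 \leq u \leq K_1 + 1,\ 0 \leq v \leq K_2 + 1\}$ for some small $T_0 \in (0,T]$. Using boundedness of $J_1, J_2, K$ together with the Lipschitz continuity of $G$ on $[0, K_1+1]$, one checks that $\Gamma(B) \subseteq B$ and
\begin{equation*}
\|\Gamma(u,v) - \Gamma(\bar u, \bar v)\|_{C(\overline{\Omega}_{T_0})^2} \leq L\, T_0\, \|(u,v) - (\bar u,\bar v)\|_{C(\overline{\Omega}_{T_0})^2}
\end{equation*}
on $B$, with $L$ depending only on $d_i, a_{ij}, K_1, K_2$ and the Lipschitz constant of $G$; choosing $L T_0 < 1$ yields a unique fixed point on $[0, T_0]$ by Banach's contraction principle.

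To extend to $[0, T]$ I would use Lemma \ref{maximalprinciple} to derive uniform pointwise bounds. A direct computation shows that $(\bar u, \bar v) \equiv (K_1, K_2)$ satisfies, thanks to $\int J_i \leq 1$, $\int K \leq 1$ and \eqref{K_1K_2},
\begin{equation*}
\bar u_t = 0 \geq d_1 \int_{g(t)}^{h(t)} J_1(x-y)\bar u\,dy - (d_1+a_{11})\bar u + a_{12}\int_{g(t)}^{h(t)} K(x-y)\bar v\,dy,
\end{equation*}
and the analogous inequality for $\bar v_t$ (using $G(K_1) \leq a_{22} K_2$). Applying Lemma \ref{maximalprinciple} (and Remark \ref{compariprinciple1} at the initial step) to $(K_1 - u, K_2 - v)$ — whose coupling coefficients $a_{12}$ and $G(u)/u$ extend to non-negative $L^\infty$ functions by the mean value theorem and $(G1)$ — gives $u \leq K_1$ and $v \leq K_2$ on $\overline{\Omega}_{T_0}$. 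Since these bounds are independent of $T_0$, the fixed-point procedure can be restarted from $t = T_0$ using $(u(\cdot,T_0), v(\cdot,T_0))$ as new initial data, with the same contraction constant $L$, and after finitely many iterations the solution is defined on all of $\overline{\Omega}_T$. Strict positivity $u, v > 0$ in $\Omega_T$ then follows from the second assertion of Lemma \ref{maximalprinciple} applied to $(u,v)$ with non-trivial initial data $(u_0, v_0)$.

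The main technical obstacle is verifying that $\Gamma$ actually maps into $X^T$, that is, that $\Gamma(u,v)$ is continuous up to the moving lateral boundary and vanishes there. As $x \to h(t)^{-}$ at fixed $t$ one has $\tau(x) \to t$, so the exponential prefactor tends to $1$ while multiplying $u_0(x)\mathbf{1}_{[-h_0,h_0]}(x)$, which vanishes at $x = \pm h_0$ and for $|x| > h_0$ by \eqref{Assumption}; simultaneously the Duhamel integral has length $t - \tau(x) \to 0$ and a bounded integrand, so it tends to $0$ as well. Making these limits uniform in $t \in [0, T_0]$ requires the continuity of $\tau$ and an explicit modulus, both of which are guaranteed by the strict monotonicity encoded in $G^T$ and $H^T$. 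Once this uniform continuity is established, and $\Gamma(B) \subseteq B$ is confirmed, the contraction estimate and the rest of the argument are routine.
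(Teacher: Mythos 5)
Your argument follows the paper's proof in all essentials: you introduce the entry time $\tau(x)$ (the paper's $t_x$), set up a contraction mapping on a short time interval $[0,T_0]$, invoke Lemma \ref{maximalprinciple} applied to the pair $(K_1-u,K_2-v)$ with $\int J_i\le 1$, $\int K\le 1$ and \eqref{K_1K_2} to obtain $T$-independent a priori bounds, iterate the local result to cover all of $[0,T]$, and use the strong maximum principle part of Lemma \ref{maximalprinciple} for strict positivity. The only structural difference — you recast everything at once as a Duhamel-type integral equation and iterate $\Gamma$ on $(u,v)$, whereas the paper freezes only the nonlocal terms at a candidate $(\phi,\psi)$, solves the resulting ODE exactly (after extending $\tilde f_i$ to negative arguments), and iterates $\mathcal{F}:(\phi,\psi)\mapsto(U_\phi,V_\psi)$ — is cosmetic and does not change the nature of the argument.
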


\begin{proof}
We shall finish this proof in three steps.

\textbf{Step 1}: A parameterised ODE problem.

We first denote
\begin{equation*}
\begin{split}
&\hat{f}_1(u,v)=-(d_1+a_{11})u, \\
&\hat{f}_2(u,v)=-(d_2+a_{22})v+G(u),
\end{split}
\end{equation*}
and define for $i=1,2$,
\[\tilde{f}_i(u,v)=
\begin{cases}
\hat{f}_i(u,v),&u,v\geq0,\\
0,&u,v<0,\\
\hat{f}_i(u,0),&u\geq0,~~v<0,\\
\hat{f}_i(0,v),&u<0,~~v\geq0.\\
\end{cases}
\]

For any $x\in[g(T),h(T)]$, define
\begin{equation*}
\tilde{u}_0(x)=\begin{cases}
u_0(x),&x\in[-h_0,h_0],\\
0,&\mbox{otherwise},
\end{cases}\hspace{0.5cm}\hspace{0.5cm}
\tilde{v}_0(x)=\begin{cases}
v_0(x),&x\in[-h_0,h_0],\\
0,&\mbox{otherwise},
\end{cases}
\end{equation*}
and
\begin{equation*}
t_x:=\begin{cases}
t_{x}^h, &\mbox{ if } x\in(h_0,h(T)]\mbox{ and } x=h(t_{x}^h),\\
0,&\mbox{ if } x\in[-h_0,h_0],\\
t_{x}^g,&\mbox{ if } x\in[g(T),-h_0)\mbox{ and } x=g(t_{x}^g).\\
\end{cases}
\end{equation*}

Clearly, $t_x=T$ if $x=g(T)$ and $x=h(T)$, and $0\leq t_x<T$ if $x\in(g(T),h(T))$.
For any given $(\phi,\psi)\in X^T$, consider the following ODE initial value problem
\begin{equation}\label{ODEmodel1}
\begin{cases}
u_t=\tilde{f}_1(u,v)+g_1(x,t),\ \
v_t=\tilde f_2(u,v)+g_2(x,t),& t> t_x,\\
u(x,t_x)=\tilde{u}_0(x),~v(x,t_x)=\tilde{v}_0(x), &x\in(g(T),h(T)),
\end{cases}
\end{equation}
where
\begin{equation*}
\begin{split}
&g_1(x,t):=d_1\int_{g(t)}^{h(t)}J_1(x-y)\phi(y,t)dy+a_{12}\int_{g(t)}^{h(t)}K(x-y)\psi(y,t)dy,\\
&g_2(x,t):=d_2\int_{g(t)}^{h(t)}J_2(x-y)\psi(y,t)dy.
\end{split}
\end{equation*}
Fix $M_1,M_2>\max\{ \|\phi\|_{C(\overline{\Omega}_{T})}, \|\psi\|_{C(\overline{\Omega}_{T})},\|u_0\|_{\infty},\|v_0\|_{\infty}\}$ such that \eqref{M_1M_2} holds.
Then define
\begin{equation*}
M^*:=2+M_1+M_2.
\end{equation*}
Since we have extended $G(u)$ to 0 when $u<0$ and $G\in C^1([0,\infty))$, for any $u_1,u_2\in(-\infty,M^*]$, we can find some constant $M_0$ such that
\begin{equation*}
|G(u_1)-G(u_2)|\leq M_0|u_1-u_2|.
\end{equation*}
Therefore, for any $u_i,~v_i\in(-\infty,M^*]$ ($i=1,2$),
\begin{equation*}
\begin{split}
&|\tilde f_1(u_1,v_1)-\tilde f_1(u_2,v_2)|\leq \tilde{M}\big(|u_1-u_2|+|v_1-v_2|\big),\\
&|\tilde f_2(u_1,v_1)-\tilde f_2(u_2,v_2)|\leq \tilde{M}\big(|u_1-u_2|+|v_1-v_2|\big),
\end{split}
\end{equation*}
where $\tilde{M}=d_1+d_2+a_{11}+a_{22}+M_0$.
This implies that  $\tilde f_i(u,v)$ $(i=1,2)$ is Lipschitz continuous in $u,v\in[-\infty,M^*]$ with Lipschitz constant $\tilde{M}$.
Moreover, by the definition, $g_i$ $(i=1,2)$ is continuous in $(x,t)$.
Therefore, by applying the Fundamental Theorem of ODEs,
for every fixed $x\in(g(T),h(T))$, \eqref{ODEmodel1} admits a unique solution $(U_{\phi}(x,t),V_{\psi}(x,t))$ in some interval of $t$, say $[t_x,T_x)$.

Next we claim that the
solution $(U_{\phi}(x,t),V_{\psi}(x,t))$ can be uniquely extended to $t\in[t_x,T]$. It suffices to show the following holds:
\begin{equation}\label{inequaility_UV}
0\leq U_{\phi}(x,t),V_{\psi}(x,t)\leq M^* \mbox{ for }t\in(t_x,T_1],\\
\end{equation} whenever $(U_{\phi}(x,\cdot),V_{\psi}(x,\cdot))$ is uniquely defined in $[t_x,T_1]$ with $T_1\in(t_x,T]$.

For $t\in(t_x, T_1]$ and $x\in(g(T_1),h(T_1))$,
set $z_1(x,t)=M_1-U_{\phi}(x,t)$ and $z_2(x,t)=M_2-V_{\psi}(x,t)$. By assumption {\bf (J)} and \eqref{M_1M_2}, we have
\begin{equation*}
\begin{split}
&(z_1)_t+(d_1+a_{11})z_1\\
=& -(U_{\phi})_t+(d_1+a_{11})M_1-(d_1+a_{11})U_{\phi}
\\
=&-d_1\int_{g(t)}^{h(t)}J_1(x-y)\phi(y,t)dy+(d_1+a_{11})M_1-a_{12}\int_{g(t)}^{h(t)}K(x-y)\psi (y,t)dy\\
>&-d_1M_1+(d_1+a_{11})M_1-a_{12}M_2\geq0.
\end{split}
\end{equation*}
Since the initial value $z_1(x,t_x)=M_1-\tilde{u}_0(x)>0$ for $x\in(g(T_1),h(T_1))$, it is easily checked that
$z_1(x,t)\geq0$ and thus $M_1\geq U_{\phi}(x,t)$ for $t\in[t_x,T_1]$ and $x\in(g(T_1),h(T_1))$.
Furthermore, we have
\begin{equation*}
\begin{split}
&(z_2)_t+(d_2+a_{22})z_2
=-(V_{\psi})_t+(d_2+a_{22})M_2-(d_2+a_{22})V_{\psi}\\
=&-d_2\int_{g(t)}^{h(t)}J_2(x-y)\psi(y,t)dy+(d_2+a_{22})M_2-G(U_{\phi})\\
>&-d_2M_2+(d_2+a_{22})M_2-G(M_1)\geq0.
\end{split}
\end{equation*}
By $z_2(x,t_x))=M_2-\tilde{v}_0(x)>0 $ for $ x\in(g(T_1),h(T_1))$,
we similarly obtain $V_{\psi}(x,t)\leq M_2$ for $t\in(t_x,T_1]$ and $x\in(g(T_1),h(T_1))$.

Moreover, since
\begin{equation*}
\begin{split}
&(U_{\phi})_t+(d_1+a_{11})U_{\phi}=
d_1\int_{g(t)}^{h(t)}J_1(x-y)\phi(y,t)dy+a_{12}\int_{g(t)}^{h(t)}K(x-y)\psi(y,t)dy\geq 0,\\
&(V_{\psi})_t+(d_2+a_{22})V_{\psi}=d_2\int_{g(t)}^{h(t)}J_2(x-y)\psi(y,t)dy+G(U_{\phi})\geq 0,
\end{split}
\end{equation*}
and \[(U_{\phi}(x,t_x),V_{\psi}(x,t_x))=(\tilde{u}_0(x),\tilde{v}_0(x))\succeq(0,0)\mbox{ for } x\in(g(T_1),h(T_1)) ,\]
it is easily checked that
$(U_{\phi}(x,t),V_{\psi}(x,t))\succeq (0,0)$ for $t\in(t_x,T_1]$.
Thus \eqref{inequaility_UV} holds, and the claim is proved.

\textbf{Step 2}: A fixed point problem.

From Step 1, we know that $(U_{\phi}(x,0),V_{\psi}(x,0))=(u_0(x),v_0(x))$ for $x\in [-h_0,h_0]$, and $(U_{\phi}(x,t),V_{\psi}(x,t))=(0,0)$ for
$x\in \{g(t),h(t)\}$ and $t\in[0,T]$.
By the continuous dependence of the ODE solution on the initial value and parameters, $(U_{\phi}(x,t),V_{\psi}(x,t))$ is continuous in $\overline{\Omega}_{T}$. For any  $T_1\in(0,T]$, we define the mapping $\mathcal{F}:~X^{T_1}\longrightarrow X^{T_1}$ by
\begin{equation*}
\mathcal{F}(\phi,\psi)=(U_{\phi},V_{\psi}).
\end{equation*}
Noting $\tilde{f}_i(u,v)=\hat{f}_i(u,v)$ $(i=1,2)$ for any $u,v\geq0$, we see that $(\phi,\psi)$ is a fixed point of $\mathcal{F}$ if and only if it solves \eqref{model3} for $t\in(0,T_1]$.
Clearly $X^{T_1}$ is a complete metric space equipped with metric
\begin{equation*}
d\big((\phi_1,\psi_1),(\phi_2,\psi_2)\big)=\|\phi_1-\phi_2\|_{C(\overline{\Omega}_{T_1})}+\|\psi_1-\psi_2\|_{C(\overline{\Omega}_{T_1})},
\end{equation*}
and
\begin{equation*}
X^{T_1}_{K}:=\{(\phi,\psi)|(\phi,\psi)\in X^{T_1},~\|\phi\|_{C(\overline{\Omega}_{T_1})}+\|\psi\|_{C(\overline{\Omega}_{T_1})}\leq K \}
\end{equation*}
with $K:=4(K_1+K_2)$ is a closed subset of $X^{T_1}$.

In this step, we will prove that
$\mathcal{F}$ maps $X^{T_1}_{K}$ into itself and is a
contraction mapping provided that $T_1$ is sufficiently small. It then follows from the
contraction mapping theorem that problem \eqref{model3} admits a unique solution in $X^{T_1}_{K}$.
Finally, we will explain that any solution of \eqref{model3} defined for $t\in [0, T_1]$ belongs to $X^{T_1}_{K}$.

For any $(\phi,\psi)\in X^{T_1}_{K}$,  $(u,v)=(U_{\phi},V_{\psi})$ solves \eqref{ODEmodel1} with $T$ replaced by $T_1$.
By \eqref{inequaility_UV} and $(G1)$,
\begin{equation*}
\sum_{i=1}^{2}\hat{f}_i(U_{\phi},V_{\psi})\leq
 k_1[U_{\phi}+V_{\psi}],
\end{equation*}
where $k_1:=d_1+d_2+a_{11}+a_{22}+G'(0)$.
Then we have
\begin{equation*}
\begin{split}
(U_{\phi})_t+(V_{\psi})_t\leq &d_1\int_{g(t)}^{h(t)}J_1(x-y)\phi(y,t)dy+d_2\int_{g(t)}^{h(t)}J_2(x-y)\psi(y,t)dy\\
&+a_{12}\int_{g(t)}^{h(t)}K(x-y)\psi(y,t)dy
+k_1[U_{\phi}+V_{\psi}]\\
\leq&d_1\|\phi\|_{C(\overline{\Omega}_{T_1})}+(d_2+a_{12})\|\psi\|_{C(\overline{\Omega}_{T_1})}+k_1[U_{\phi}+V_{\psi}].
\end{split}
\end{equation*}
Multiplying $e^{-k_1t}$ to the above inequality and then integrating it from $t_x$ to $t\in(t_x,T_1]$, we obtain
\begin{equation*}
\begin{split}
U_{\phi}(x,t)+V_{\psi}(x,t)&\leq e^{k_1t}(U_{\phi}(x,t_x)+V_{\psi}(x,t_x))+
\big(d_1\|\phi\|_{C(\overline{\Omega}_{T_1})}+(d_2+a_{12})\|\psi\|_{C(\overline{\Omega}_{T_1})}\big)\int^t_{t_x}e^{k_1(t-s)}ds\\
&\leq e^{k_1t}\big(\|u_0\|_{\infty}+\|v_0\|_{\infty}\big)+te^{k_1t}\big(d_1\|\phi\|_{C(\overline{\Omega}_{T_1})}+(d_2+a_{12})
\|\psi\|_{C(\overline{\Omega}_{T_1})}\big)\\
&\leq e^{k_1t}(K_1+K_2)+Kte^{k_1t}(d_1+d_2+a_{12}).
\end{split}
\end{equation*}

Choose some small enough constant $T_1^*\in(0,T)$ such that $e^{k_1T_1^*}<2$ and $T^*_1e^{k_1T^*_1}(d_1+d_2+a_{12})<\frac{1}{4}$.
Then, for any $x\in (g(T_1), h(T_1))$ and $t_x<t\leq T^*_1$,
\begin{equation*}
U_{\phi}(x,t)+V_{\psi}(x,t)\leq 2(K_1+K_2)+\frac{1}{4}K=\frac{3}{4}K<K,
\end{equation*}
which implies that $(U_{\phi}(x,t),V_{\psi}(x,t))\in X^{T_1}_{K}$ for $(x,t)\in \overline{\Omega}_{T_1}$ with $T_1\in(0,T_1^*]$.

For any  $T_1\in(0,T^*]$,
let
$(\phi_i,\psi_i)\in  X^{T_1}_{K}$ ($i=1,2$), and denote $U=U_{\phi_1}-U_{\phi_2}$, $V=V_{\psi_1}-V_{\psi_2}$. Then
 \begin{equation*}
 \begin{cases}\displaystyle
U_t+(d_1+a_{11})U=d_1\int_{g(t)}^{h(t)}J_1(x-y)(\phi_1-\phi_2)(y,t)dy\\
\displaystyle
\hspace{4cm}+a_{12}\int_{g(t)}^{h(t)}K(x-y)(\psi_1-\psi_2)(y,t)dy,&t_x<t\leq T_1,\\
U(x,t_x)=0,&x\in(g(T_1),h(T_1)).
 \end{cases}
 \end{equation*}
It follows  that for $x\in (g(T_1), h(T_1))$ and $t_x<t\leq T_1$,
\begin{equation*}
\begin{split}
&U(x,t)=e^{-(d_1+a_{11})t}\int_{t_x}^t e^{(d_1+a_{11})s}\Big(d_1\int_{g(s)}^{h(s)}J_1(x-y)(\phi_1-\phi_2)(y,s)dy\\
&\hspace{6cm}+a_{12}\int_{g(s)}^{h(s)}K(x-y)(\psi_1-\psi_2)(y,s)dy\Big)ds.
 \end{split}
 \end{equation*}
Therefore,
\begin{equation*}
\begin{split}
|U(x,t)|&\leq\big( d_1\|\phi_1-\phi_2\|_{C(\overline{\Omega}_{T_1})}+a_{12}\|\psi_1-\psi_2\|_{C(\overline{\Omega}_{T_1})}\big)(t-t_x)\\
&\leq  T_1\big( d_1\|\phi_1-\phi_2\|_{C(\overline{\Omega}_{T_1})}+a_{12}\|\psi_1-\psi_2\|_{C(\overline{\Omega}_{T_1})}\big),
\end{split}
\end{equation*}
and thus
\begin{equation*}
\|U\|_{C(\overline{\Omega}_{T_1})}\leq T_1\big( d_1\|\phi_1-\phi_2\|_{C(\overline{\Omega}_{T_1})}+a_{12}\|\psi_1-\psi_2\|_{C(\overline{\Omega}_{T_1})}\big).
\end{equation*}
Similarly,
\begin{equation*}
\|V\|_{C(\overline{\Omega}_{T_1})}\leq T_1\big( d_2\|\psi_1-\psi_2\|_{C(\overline{\Omega}_{T_1})}+G^*\|U\|_{C(\overline{\Omega}_{T_1})}\big),
\end{equation*}
with $G^*:=\max_{u\in[0,K]} |G'(u)|$.
Therefore, if $T^*_2\in(0,T)$ is such that
\begin{equation*}
T^*_2(d_1+d_2+a_{12})<\frac{1}{4}\mbox{ and }T^*_2G^*<\frac{1}{2},
\end{equation*}
 then for any $T_1\in (0,T^*]$ with $T^*:=\min\{T^*_1,T^*_2\}$,
\begin{equation*}
\|U\|_{C(\overline{\Omega}_{T_1})}+\|V\|_{C(\overline{\Omega}_{T_1})}\leq  \frac{1}{4}\big(\|\phi_1-\phi_2\|_{C(\overline{\Omega}_{T_1})}+\|\psi_1-\psi_2\|_{C(\overline{\Omega}_{T_1})} \big)+\frac 12 \|U\|_{C(\overline{\Omega}_{T_1})},
\end{equation*}
which implies
\[
\|U\|_{C(\overline{\Omega}_{T_1})}+\|V\|_{C(\overline{\Omega}_{T_1})}\leq  \frac{1}{2}\big(\|\phi_1-\phi_2\|_{C(\overline{\Omega}_{T_1})}+\|\psi_1-\psi_2\|_{C(\overline{\Omega}_{T_1})} \big).
\]

Hence,  $\mathcal{F}$ is a contraction mapping in $ X^{T_1}_{K}$ for $T_1\in(0,T^*]$. By applying the contraction mapping theorem, $\mathcal{F}$ admits a unique fixed point $(u,v)$ in $ X^{T_1}_{K}$, which implies that $(u,v)$ is a solution of  \eqref{model3} in $  X^{T_1}_{K}$.

To conclude that  \eqref{model3} has a unique solution for $t\in (0, T_1]$ with
$T_1\in(0,T^*]$, it suffices to show that any solution $(u,v)$ of \eqref{model3} defined for $t\in (0, T_1]$  belongs to $ X^{T_1}_{K}$. In fact, we claim the following more accurate estimates
\begin{equation}\label{estimate_uvlocal}
0\leq u(x,t)\leq K_1,~~0\leq v(x,t)\leq K_2 \mbox{ for } t\in[0,T_1] \mbox{ and }x\in[g(t),h(t)].\\
\end{equation}

Indeed, it follows from \eqref{K_1K_2} that
 \begin{equation*}
 \begin{split}
&d_1\int_{g(t)}^{h(t)}J_1(x-y)K_1dy-d_1K_1-a_{11}K_1+a_{12}\int_{g(t)}^{h(t)}K(x-y)K_2dy\\
\leq&\ d_1K_1-d_1K_1-a_{11}K_1+a_{12}K_2\leq0,\\
&d_2\int_{g(t)}^{h(t)}J_2(x-y)K_2dy-d_2K_2-a_{22}K_2+G(K_1)\leq0.
 \end{split}
 \end{equation*}
Since $(K_1,K_2)\succeq (\|u_0\|_{\infty},\|v_0\|_{\infty})$ and $(K_1,K_2)\ggs (u(x,t),v(x,t)))$ for $t\in(0,T_1]$ and $x\in\{g(t),h(t)\}$,
we may apply Lemma \ref{maximalprinciple} to $K_1-u$ and $K_2-v$ to obtain
 $u(x,t)\leq K_1$ and  $v(x,t)\leq K_2$ for $t\in[0,T_1]$
and $x\in[g(t),h(t)]$.
Since it is clear that $(u(x,t),v(x,t))\succeq(0,0)$, \eqref{estimate_uvlocal} is proved.

Thus, we have proved that for any $T_1\in(0,T^*]$,
\eqref{model3} admits a unique solution defined for $t\in[0,T_1]$.

\textbf{Step 3}: Extension of the solution to $t\in[0,T]$.

By the choice of $T^*$, we can replace the initial time $t=0$ with $t=T_1$ $(T_1\in(0,T^*])$ and then repeat Step 2.
Therefore, the solution of \eqref{model3} can be uniquely extended to $t\in[0,T_2]$ with $T_2\in(0,\min\{2T^*,T\}]$
and also satisfy  the estimates in \eqref{estimate_uvlocal} for $(x,t)\in\overline{\Omega}_{T_2}$. It is now clear that after repeating this process finitely many times,
the solution
of \eqref{model3} is uniquely extended to $t\in[0,T]$ and the estimates in \eqref{estimate_uvlocal} hold for $(x,t)\in\overline{\Omega}_{T}$. Since $u_0(x),v_0(x)>0$
for $x\in(-h_0,h_0)$ and $G(u)$ can be written as $G'(\xi^0(x,t))u$ with $\xi^0(x,t)\in[0,u]$ and $G'(\xi^0(x,t))\geq0$, by applying Lemma \ref{maximalprinciple}, we have $u(x,t),v(x,t)>0$ for $(x,t)\in\Omega_{T}$. Hence
\eqref{estimate_uv} holds.
\end{proof}

\begin{proof}[{\bf Proof of Theorem \ref{uniexist}}]
From Lemma \ref{solution_uv}, we know that for any $T>0$ and $(g,h)\in G^T\times H^T$, problem \eqref{model3} admits a unique solution
$(u,v)=(u^{(g,h)}, v^{(g,h)})\in X^T$ satisfying \eqref{estimate_uv}. Using such a pair $(u,v)$,
we can define a mapping $\tilde{\mathcal{F}}(g,h)=(\tilde{g},\tilde{h})$  by
\begin{equation}\label{definition_tildegh}
\begin{cases}
&\tilde{g}(t):=\displaystyle-h_0-\mu\int_0^t\int_{g(s)}^{h(s)}\int_{-\infty}^{g(s)}\Big[J_1(x-y)u(x,s)+\rho J_2(x-y)v(x,s)\Big]dydxds,\\
&\tilde{h}(t):=\displaystyle h_0+\mu\int_0^t\int_{g(s)}^{h(s)}\int^{+\infty}_{h(s)}\Big[J_1(x-y)u(x,s)+\rho J_2(x-y)v(x,s)\Big]dydxds.\\
\end{cases}
\end{equation}

Our plan to complete the proof is  as follows. First,
we will define a closed subset $\Sigma_T$ of $G^T\times H^T$ by making an extra assumption on $(g,h)$. Then,
we prove for all sufficiently small $T>0$, $\tilde{\mathcal{F}}$ maps $\Sigma_T$ to itself and is a contraction mapping. By the contraction mapping theorem, $\tilde{\mathcal{F}}$ has
a unique fixed point in $\Sigma_T$, which implies that \eqref{model*} for $0<t\leq T$ admits a unique solution in the set $\Sigma_T$. To conclude that this is the unique  solution of \eqref{model*} defined for $0<t\leq T$, we will show that for any solution $(u,v,g,h)$ of \eqref{model*} defined for $0<t\leq T$, we have
 $(g,h)\in\Sigma^T$. Finally, we will uniquely extend this solution from $[0,T]$ to $[0,\infty)$.

We carry out the above plan in 4 steps.

\textbf{Step 1}: We define $\Sigma_{T}$ and show that $\tilde{\mathcal{F}}$ maps $\Sigma_T$ to itself if $T>0$ is sufficiently small.

 By \eqref{definition_tildegh},
$\tilde{g},\tilde{h} \in C^1([0,T])$, and for $0<t\leq T$,
\begin{equation}\label{derivative_tildegh}
\begin{cases}
\tilde{g}'(t)=\displaystyle-\mu \int_{g(t)}^{h(t)}\int_{-\infty}^{g(t)}\Big[J_1(x-y)u(x,t)dydx+\rho _2(x-y)v(x,t)\Big]dydx,\\
\tilde{h}'(t)=\displaystyle\mu \int_{g(t)}^{h(t)}\int^{\infty}_{h(t)}\Big[J_1(x-y)u(x,t)dydx+\rho J_2(x-y)v(x,t)\Big]dydx.\\
\end{cases}
\end{equation}
Clearly, $\tilde{g}'(t)< 0$ and $\tilde{h}'(t)>0$ in $(0,T]$.
Hence $(\tilde{g},\tilde{h})$ belongs to $G^T\times H^T$.
Moreover,
\begin{equation*}\label{Auxiliary1}
\begin{cases}
u_t\geq-(d_1+a_{11})u,&0<t\leq T,~~x\in(g(t),h(t)),\\
v_t\geq-(d_2+a_{22})v,&0<t\leq T,~~x\in(g(t),h(t)),\\
u(h(t),t)=u(g(t),t)=v(h(t),t)=v(g(t),t)=0,&0<t\leq T,\\
u(x,0)=u_0(x),~~v(x,0)=v_0(x),&x\in[-h_0,h_0].
\end{cases}
\end{equation*}
Therefore,
\begin{equation}\label{estimate_A1}
\begin{split}
&u(x,t)\geq e^{-(d_1+a_{11})t}u_0(x)\geq e^{-(d_1+a_{11})T}u_0(x)\mbox{ for }x\in[-h_0,h_0],t\in(0,T],\\
&v(x,t)\geq e^{-(d_2+a_{22})t}v_0(x)\geq e^{-(d_2+a_{22})T}v_0(x)\mbox{ for }x\in[-h_0,h_0],t\in(0,T].
\end{split}
\end{equation}

By \eqref{derivative_tildegh}, we see that, for $t\in[0,T]$,
\begin{equation*}
\begin{split}
\big[\tilde{h}(t)-\tilde{g}(t)\big]'=&\ \mu\Big[\int_{g(t)}^{h(t)}\int^{+\infty}_{h(t)}J_1(x-y)u(x,t)dydx
+\int_{g(t)}^{h(t)}\int_{-\infty}^{g(t)}J_1(x-y)u(x,t)dydx\Big]\\
&+\mu\rho\Big[\int_{g(t)}^{h(t)}\int^{+\infty}_{h(t)}J_2(x-y)v(x,t)dydx
+\int_{g(t)}^{h(t)}\int_{-\infty}^{g(t)}J_2(x-y)v(x,t)dydx\Big]\\
\leq&\ \mu\Big[\int_{g(t)}^{h(t)}\int^{+\infty}_{-\infty}J_1(x-y)u(x,t)dydx+\rho\int_{g(t)}^{h(t)}
\int^{+\infty}_{-\infty}J_2(x-y)v(x,t)dydx\Big]\\
\leq&\ \mu \big(K_1+\rho K_2\big)[h(t)-g(t)].
\end{split}
\end{equation*}

It follows from $\textbf{(J)}$ that there exist constants $\varepsilon_0\in(0,h_0/4)$ and $\theta_0>0$ such that, for $i=1,2$,
\begin{equation}\label{estimate_J}
J_i(x-y)\geq\theta_0 \mbox{ if }|x-y|\leq \varepsilon_0.
\end{equation}
Now if we assume $(g,h)$ additionally satisfies
\begin{equation}\label{assumo-extra}
h(T)-g(T)\leq 2h_0+\frac{\varepsilon_0}{4},
\end{equation}
then for $t\in[0,T]$,
\begin{equation*}
\tilde{h}(t)-\tilde{g}(t)\leq 2h_0+\mu \big(K_1+\rho K_2\big) T( 2h_0+\frac{\varepsilon_0}{4})\leq  2h_0+\frac{\varepsilon_0}{4}
\end{equation*}
if $T=T(\mu,\rho,K_1,K_2,h_0,\varepsilon_0)$ is sufficiently small. Denote this $T(\mu,\rho,K_1,K_2,h_0,\varepsilon_0)$ as $T_0$. For
any $T\in(0,T_0]$, it follows from \eqref{assumo-extra} that
\begin{equation*}
h(t)\in[h_0,h_0+\frac{\varepsilon_0}{4}],~~g(t)\in[-h_0-\frac{\varepsilon_0}{4},-h_0].
\end{equation*}

By \eqref{estimate_A1} and \eqref{estimate_J}, for any $t\in(0,T]$ with $T\in(0,T_0]$,
\begin{equation*}
\begin{split}
&\int_{g(t)}^{h(t)}\int^{+\infty}_{h(t)}J_1(x-y)u(x,t)dydx+\rho\int_{g(t)}^{h(t)}\int^{+\infty}_{h(t)}J_2(x-y)v(x,t)dydx\\
\geq&\int_{h(t)-\frac{\varepsilon_0}{2}}^{h(t)}\int^{h(t)+\frac{\varepsilon_0}{2}}_{h(t)}J_1(x-y)u(x,t)dydx+\rho
\int_{h(t)-\frac{\varepsilon_0}{2}}^{h(t)}\int^{h(t)+\frac{\varepsilon_0}{2}}_{h(t)}J_2(x-y)v(x,t)dydx\\
\geq&e^{-(d_1+a_{11})T}\int_{h_0-\frac{\varepsilon_0}{4}}^{h_0}\int^{h_0+\frac{\varepsilon_0}{2}}_{h_0+\frac{\varepsilon_0}{4}}J_1(x-y)dyu_0(x)dx
+\rho e^{-(d_2+a_{22})T}\int_{h_0-\frac{\varepsilon_0}{4}}^{h_0}\int^{h_0+\frac{\varepsilon_0}{2}}_{h_0+\frac{\varepsilon_0}{4}}J_2(x-y)dyv_0(x)dx\\
\geq&\frac{\varepsilon_0}{4}\theta_0\Big(e^{-(d_1+a_{11})T_0}\int_{h_0-\frac{\varepsilon_0}{4}}^{h_0}u_0(x)dx
+\rho e^{-(d_2+a_{22})T_0}\int_{h_0-\frac{\varepsilon_0}{4}}^{h_0}v_0(x)dx\Big)=:\delta_1>0.
\end{split}
\end{equation*}
Note that $\delta_1$ depends on $(\theta_0,u_0,v_0,\rho)$ but is independent of $T\in(0,T_0]$.
It follows that, for any $t\in(0,T]$ with $T\in(0,T_0]$,
\begin{equation}\label{estimate_h}
\tilde{h}'(t)\geq \mu\delta_1.
\end{equation}

Similarly, we can obtain that, for any $t\in(0,T]$ with $T\in(0,T_0]$,
\begin{equation}\label{estimate_g}
\tilde{g}'(t)\leq -\mu\delta_2,
\end{equation}
where
\begin{equation*}
\delta_2:=\frac{\varepsilon_0}{4}\theta_0\Big(e^{-(d_1+a_{11})T_0}\int^{-h_0+\frac{\varepsilon_0}{4}}_{-h_0}u_0(x)dx
+\rho e^{-(d_2+a_{22})T_0}\int^{-h_0+\frac{\varepsilon_0}{4}}_{-h_0}v_0(x)dx\Big)>0.
\end{equation*}

Now we are ready to define, for any $T\in(0,T_0]$,
\begin{equation*}
\begin{split}
\Sigma_T:=&\Big\{(g,h)\in  G^T\times H^T:~\sup_{0\leq t_1<t_2\leq T}\frac{g(t_2)-g(t_1)}{t_2-t_1}\leq -\mu\delta_2,\\
&\ \ \ \ \ \ \inf_{0\leq t_1<t_2\leq T}\frac{h(t_2)-h(t_1)}{t_2-t_1}\geq \mu\delta_1,\ h(t)-g(t)\leq 2h_0+\frac{\varepsilon_0}{4} \mbox{ for }t\in[0,T]\Big\}.
\end{split}
\end{equation*}
Based on the above discussion, it is easily seen that $\tilde{\mathcal{F}}$ maps $\Sigma_T$ to itself for every $T\in (0, T_0]$.

\textbf{Step 2}:
We show that $\tilde{\mathcal{F}}$ is a contraction mapping on $\Sigma_T$ for all small $T>0$.

The proof of this step is long and tedious, but it is only a simple modification of the corresponding step in the proof of Theorem 2.1 in \cite{DWZ}, so we leave the details to the interested reader.

\textbf{Step 3}: Local existence and uniqueness.

By Step 2 and the contraction mapping theorem, we know that for all small $T>0$, say $T\in (0, T^*]$, $\tilde{\mathcal{F}}$ has a fixed point $(g,h)\in \Sigma_T$, and
$(u^{(g,h)},v^{(g,h)},g,h)$ is a solution of \eqref{model*} with $(g,h)\in\Sigma_{T}$. If we can prove that
$(g,h)\in\Sigma_{T}$ holds for any solution $(u,v,g,h) $ of \eqref{model*} defined for $t\in(0,T]$, then $(g,h)$ must coincide with the unique
fixed point of $\tilde{\mathcal{F}}$ in $\Sigma_{T}$, and hence \eqref{model*} admits a unique solution defined for $t\in(0,T]$.

Now let $(u,v,g,h)$ be an arbitrary solution of \eqref{model*} defined for $t\in(0,T]$. Then by Lemma 5.1, necessarily $(u,v)=(u^{(g,h)}, v^{(g,h)})$. Moreover, from
\begin{equation*}
\begin{cases}
g'(t)=\displaystyle-\mu \int_{g(t)}^{h(t)}\int_{-\infty}^{g(t)}J_1(x-y)u(x,t)dydx-\mu\rho \int_{g(t)}^{h(t)}\int_{-\infty}^{g(t)}J_2(x-y)v(x,t)dydx,\\
h'(t)=\displaystyle\mu \int_{g(t)}^{h(t)}\int^{+\infty}_{h(t)}J_1(x-y)u(x,t)dydx+\mu\rho \int_{g(t)}^{h(t)}\int^{+\infty}_{h(t)}J_2(x-y)v(x,t)dydx
\end{cases}
\end{equation*}
we obtain
\begin{equation*}
\begin{split}
h'(t)-g'(t)=&\ \mu \int_{g(t)}^{h(t)}\Big(\int_{-\infty}^{g(t)}+\int^{+\infty}_{h(t)}\Big)J_1(x-y)u(x,t)dydx\\
&+\mu\rho \int_{g(t)}^{h(t)}\Big(\int_{-\infty}^{g(t)}+\int^{+\infty}_{h(t)}\Big)J_2(x-y)v(x,t)dydx,\\
\leq&\ \mu(K_1+\rho K_2)[h(t)-g(t)],
\end{split}
\end{equation*}
which gives
\begin{equation*}\label{hgbound}
h(t)-g(t)\leq 2h_0e^{\mu(K_1+\rho K_2)t},~~t\in(0,T].
\end{equation*}
If we shrink $T^*$ such that $2h_0e^{\mu(K_1+\rho K_2)T^*}\leq 2h_0+\frac{\varepsilon_0}{4}$, then for any $T\in (0, T^*]$,
\begin{equation*}
h(t)-g(t)\leq2h_0+\frac{\varepsilon_0}{4}, ~~t\in(0,T].
\end{equation*}
By \eqref{estimate_h} and \eqref{estimate_g}, we have
\begin{equation*}
h'(t)\geq \mu\delta_1,~~g'(t)\leq \mu\delta_2 \mbox{ for } t\in (0, T].
\end{equation*}
Thus $(g,h)\in\Sigma_{T}$ for all small $T>0$, and we have proved the uniqueness of the solution of \eqref{model3} in $t\in(0,T]$ for small $T>0$.

\textbf{Step 4}: Global existence and uniqueness.

This step can be proved by the same argument used in the proof of Theorem 2.1 in \cite{DWZ}, and we omit the details.
\end{proof}


\end{document}